\newtheorem*{linflips}{Main Theorem 1A:  Classification of Linear Flips}
\newtheorem*{slinflips}{Main Theorem 1B:  Classification of $\sigma$-Semilinear Flips}
\newtheorem*{lft}{Main Theorem 2:  Linear Flag Transitivity}
\newtheorem*{slft}{Main Theorem 3:  $\sigma$-Semilinear Flag Transitivity}
\newtheorem*{cflips}{Main Theorem 1:  Classification of Flips}
\newenvironment{rem}{\noindent\emph{Remark.  }}{\hfill$\lozenge$}
\newtheorem{thm}{Theorem}[section]
\newtheorem{lem}[thm]{Lemma}
\newtheorem{cor}[thm]{Corollary}
\theoremstyle{definition}
\newtheorem{defn}{Definition}[section]
\newtheorem{cons}{Construction}
\newtheorem{note}{Note}[section]
\newcommand{\dperp}{\perp\hspace{-.15cm}\perp}
\begin{document}

\title{On Flips of Unitary Buildings I:  Classification of Flips}

\maketitle

\begin{center}
{\large{Rieuwert J. Blok$^{1}$ and Benjamin Carr$^{1,2}$\\{\large{$^{1}$Department of Mathematics and Statistics\\Bowling Green State University\\ Bowling Green, Ohio 43402 USA\\ \vspace{3mm}$^{2}$MCPE Division\\Lindenwood University\\209 S. Kingshighway\\St. Charles, Missouri 63301 USA\\bcarr@member.ams.org}}}}
\end{center}

{\large{Keywords:  building, flip, phan involution, incidence geometry}}

\vspace{.5cm}

{\large{AMS Subject Classification (2010):  Primary 51A50; Secondary 51E26}}

\vspace{.5cm}

{\large{Suggested Running Title:  On Flips of Unitary Buildings I}}

\begin{abstract}
We classify flips of buildings arising from non-degenerate unitary spaces of dimension at least 4 over finite fields of odd characteristic in terms of their action on the underlying vector space.  We also construct certain geometries related to flips and prove that these geometries are flag transitive.
\end{abstract}

\newpage

\section{Introduction}

\subsection{History}

This paper should be viewed as part of a program described in \cite{BGHS2001} to prove theorems similar to Phan's theorem.  These so-called ``Phan-type" theorems have been studied in a number of papers (e.g. \cite{BCS2004}, \cite{BGHS2007}, \cite{GHNS2005}) initially in order to aid the Gorenstein-Lyons-Solomon revision of the proof of the Classification of Finite Simple Groups.  Roughtly speaking, these ``Phan-type'' theorems allow for the recognition of a group based on amalgams of subgroups that are produced by the group acting on a geometry.  These results all rely on the fact that if a geometry is simply connected, then a flag transitive automorphism group of the geometry is the universal completion of its amalgam of maximal parabolic subgroups.  The reader interested in more detail should consult \cite{BGHS2001} for an overview.

The strategy to prove further Phan-type theorems is to identify a simply connected flag transitive geometry, and a group acting flag transitively on the geometry.  The notion of a \textbf{flip} (or \textbf{Phan involution} to some authors) was introduced in \cite{BGHS2001} as a means to produce new geometries which are, in many cases, simply connected and flag transitive.

Flips are studied in a more general context in \cite{Horn2009} and \cite{GHM2009} where their properties are explored, however the authors do not make a closer study of flips of the building under consideration here.

\subsection{The Results of This Paper}

Throughout this paper, $q$ denotes an odd prime power, $\Delta$ denotes the building associated to the geometry of totally isotropic subspaces of a $2n$-dimensional ($n\geq 2$) non-degenerate unitary space $(V,\beta)$ over $\mathbb{F}=\mathbb{F}_{q^{2}}$, and $\sigma$ denotes the $q$th power map on $\mathbb{F}$.

In this paper we classify flips of $\Delta$ in terms of their action on $(V,\beta)$.  Since the interest in flips arises because of the possibility of proving further Phan-type theorems our proof is highly geometric, relying on the construction of geometries induced by the flip.  Finally, we prove that these geometries are flag transitive and therefore can be used to prove Phan-type theorems when they are simply connected.  In \cite{Carr2009b} we study the topological properties of these geometries and show that in large rank they are simply connected.  

The main results of this paper are as follows:

\begin{cflips}\label{thm.cflips}  Let $\varphi$ be a flip of $\Delta$.  Then $\varphi$ is induced by a semilinear transformation $f$ of the underlying unitary space $V$ such that exactly one of the following holds:
\begin{enumerate}
\item[(i)]  $f$ is a linear isometry of $(V,\beta)$, $f^{2}=\mathrm{id}$ on $V$, and there is a hyperbolic basis $\{e_{i},f_{i}\}_{i=1}^{n}$ for $V$ such that $f(e_{i})=f_{i}$ for $i=1,\ldots,n$;
\item[(ii)]  $f$ is a linear anti-isometry of $(V,\beta)$, $f^{2}=\mathrm{id}$ on $V$, and there is a hyperbolic basis $\{e_{i},f_{i}\}_{i=1}^{n}$ for $V$ such that $f(e_{i})=\alpha f_{i}$ and $f(f_{i})=\alpha^{-1} e_{i}$ for $i=1,\ldots,n$, where $\alpha$ is a trace 0 element of $\mathbb{F}$;
\item[(iii)]  $f$ is a $\sigma$-semilinear isometry of $(V,\beta)$, $f^{2}=\mathrm{id}$ on $V$, and there is a hyperbolic basis $\{e_{i},f_{i}\}_{i=1}^{n}$ for $V$ such that $f(e_{i})=f_{i}$ for $i=1,\ldots,n$;
\item[(iv)]  $f$ is a $\sigma$-semilinear isometry of $(V,\beta)$, $f^{2}=\mathrm{id}$ on $V$, and there is a hyperbolic basis $\{e_{i},f_{i}\}_{i=1}^{n}$ for $V$ such that for $i=1,\ldots,n-1$, $f(e_{i})=f_{i}$, $f(f_{i})=e_{i}$ and there is a non-square $\lambda\in\mathbb{F}$ with $f(e_{n})=\lambda f_{n}$ and $f(f_{n})=\sigma(\lambda^{-1})e_{n}$.
\end{enumerate}
Conversely any semilinear transformation of $V$ satisfying one of (i)-(iv) induces a flip of $\Delta$.
\end{cflips}

So there are up to a unitary base change only four flips of $\Delta$.  Each flip gives rise to non-isomorphic geometries which can be used to prove Phan-type theorems about flag-transitive automorphism groups of the geometries when the geometries are simply connected by appealing to Tits' Lemma (Corollaire 1 of \cite{Tits1986}.)

In the body of the paper this theorem is split into four pieces.  First we prove in Lemma \ref{lem.induceflip} that every flip of $\Delta$ is induced by some linear isometry, linear anti-isometry, or $\sigma$-semilinear isometry of $(V,\beta)$.  Then, in Lemma \ref{lem.converse} we prove that a semilinear transformation of $V$ satisfying any of (i)-(iv) induces a flip of $\Delta$.  We then prove in Main Theorem 1A (Section \ref{sec.linflips}) that if the transformation is linear, then (i) or (ii) holds.  Finally in Main Theorem 1B (Section \ref{sec.slinflips}) we prove that if the transformation is $\sigma$-semilinear then either (iii) or (iv) holds.

In addition to classifying the flips of $\Delta$ we prove the following results regarding the geometries $\Gamma(n,q)$ and $\Gamma_{1}(n,q)$.  The construction of $\Gamma(n,q)$ is carried out in Section \ref{sec.geom} and the construction of $\Gamma_{1}(n,q)$ is carried out in Section \ref{sec.sgeom}.

\begin{lft}  If $\varphi$ is a flip of $\Delta$ induced by a linear transformation of $(V,\beta)$ then the geometry $\Gamma(n,q)$ is flag transitive.
\end{lft}

\begin{slft}  If $\varphi$ is a flip of $\Delta$ induced by a $\sigma$-semilinear transformation of $(V,\beta)$ then the geometry $\Gamma_{1}(n,q)$ is flag transitive.
\end{slft}

With these results in hand, the last step to establishing new Phan-type theorems is to study the homotopy properties of these geometries.  This is done for large rank cases in \cite{Carr2009b}.

\subsection{Acknowledgments}

The results of this paper are part of the second authors Ph.D. thesis, \cite{CarrD}, under the supervision of the first author.  We would also like to express our gratitude to Professor Antonio Pasini for a careful proofreading of the paper and his many helpful comments.  Finally we would like to thank the anonymous referee for his comments.

\section{Definitions}\label{ch.group}

\subsection{Incidence Geometry}

\begin{defn}  Let $I$ be a set.  A \textbf{pregeometry} over $I$ is a set $\Gamma$ together with a \textbf{type function} $t:\Gamma\to I$ and a symmetric, reflexive \textbf{incidence relation} $\sim$ on $\Gamma$ with the property that for $x$, $y\in \Gamma$, $x\sim y$ and $t(x)=t(y)$ implies $x=y$.  The set $I$ is called the \textbf{type set} of the pregeometry.  The cardinality of $I$ is called the \textbf{rank} of the pregeometry.  The elements of $\Gamma$ are called the \textbf{objects} of the pregeometry.

A pregeometry is often denoted by an ordered quadruple $(\Gamma,I,t,\sim)$.  If the context is unambiguous the pregeometry may be denoted $\Gamma$.
\end{defn}

\begin{defn}  Let $\Gamma$ be a pregeometry.  A \textbf{flag} is a set of pairwise incident elements.  The \textbf{type of a flag} $\mathcal{F}=\{F_{i_{1}},\ldots,F_{i_{k}}\}$ is $t(\mathcal{F})=\{t(F_{i_{j}})|j=1,\ldots,k\}$.  The \textbf{cotype} of $\mathcal{F}$ is $I\setminus t(\mathcal{F})$.  A flag of type $I$ is called a \textbf{chamber}.

A flag $F$ is \textbf{maximal} if it is not properly contained in any other flag.

$\Gamma$ is \textbf{transversal} if every maximal flag is a chamber.  A transversal pregeometry is called a \textbf{geometry}.
\end{defn}

\begin{defn}  Let $F$ be a flag in a geometry $\Gamma$.  The \textbf{residue} of $F$ in $\Gamma$, denoted $\mathrm{res}_{\Gamma}(F)$, is the set of all elements of $\Gamma\setminus F$ that are incident to all elements of $F$.  The residue of a flag is a geometry with type set $I\setminus t(F)$.  The rank of a residue is called the \textbf{corank} of the flag.
\end{defn}

\begin{defn}  An \textbf{automorphism} of a geometry $\Gamma$ is a permutation of its objects that preserves incidence and type.  Denote the group of all automorphisms of $\Gamma$ by $\mathrm{Aut}(\Gamma)$.
\end{defn}

\begin{defn}  Let $\Gamma$ be a geometry and let $G\leq\mathrm{Aut}(\Gamma)$.  We say that $G$ acts \textbf{flag transitively} on $\Gamma$ if, given two flags $C$, $D$ of $\Gamma$ of the same type, there is an element $g\in G$ so that $g(C)=D$.  If $\mathrm{Aut}(\Gamma)$ acts flag transitively on $\Gamma$ then $\Gamma$ is called a \textbf{flag transitive geometry}.
\end{defn}

\subsection{Buildings and Flips}

The material in this section follows \cite{AbBr2008} with the exception of the definition of a flip, which is taken from \cite{BGHS2001}.

\begin{defn}\label{def.newbuilding}  Let $(W,S)$ be a Coxeter system.  A \textbf{building} of type $(W,S)$ is a non-empty set $\mathcal{C}$ together with a map $\delta:\mathcal{C}\times\mathcal{C}\to W$ such that for all $C, D\in\mathcal{C}$ we have:
\begin{enumerate}
\item[(i)]  $\delta(C,D)=1$ if and only if $C=D$;
\item[(ii)]  If $\delta(C,D)=w$ and $C'\in\mathcal{C}$ with $\delta(C',C)=s\in S$ then $\delta(C',D)=sw$ or $w$.  Moreover if $l(sw)=l(w)+1$ then $\delta(C',D)=sw$.
\item[(iii)]  If $\delta(C,D)=w$ then for any $s\in S$ there is an element $C'\in \mathcal{C}$ with $\delta(C',C)=s$ and $\delta(C',D)=sw$.
\end{enumerate}
The elements of $\mathcal{C}$ are called \textbf{chambers}.

A building of type $(W,S)$ is called \textbf{spherical} if $(W,S)$ is a spherical Coxeter system.

Let $(\mathcal{C},\delta)$ be a spherical building of type $(W,S)$.  Two chambers $C$ and $D$ are \textbf{opposite} if $\delta(C,D)=w_{0}$, where $w_{0}$ is the longest word of $(W,S)$.
\end{defn}

\begin{defn}  Let $(\mathcal{C},\delta)$, $(\mathcal{C}',\delta')$ be buildings of type $(W,S)$.  An \textbf{isomorphism} between $(\mathcal{C},\delta)$ and $(\mathcal{C}',\delta')$ is a bijection $\rho:\mathcal{C}\to\mathcal{C}'$ such that for all $u$, $v\in \mathcal{C}$, $\delta(u,v)=\delta'(\rho(u),\rho(v))$.

An \textbf{automorphism} of $(\mathcal{C},\delta)$ is an automorphism of $(\mathcal{C},\delta)$ with itself.
\end{defn}

\begin{rem}  What we have called isomorphisms are sometimes called \textbf{isometries} of the building, with the term isomorphism reserved for a larger class of maps.  For the building associated to the geometry of totally isotropic subspaces of a non-degenerate unitary space over a finite field the two terms are equivalent.
\end{rem}

\begin{defn}  An \textbf{apartment} of a building $(\mathcal{C},\delta)$ of type $(W,S)$ is a subset $A$ of $(\mathcal{C},\delta)$ such that $(A,\delta|_{A})$ is isomorphic to the Coxeter building of type $(W,S)$.
\end{defn}

\begin{defn}  Let $(\mathcal{C},\delta)$ be a spherical building of type $(W,S)$ and let $w_{0}$ be the longest word of $(W,S)$.  A \textbf{flip} is a map $f:\mathcal{C}\to\mathcal{C}$ such that for all $C,D\in\mathcal{C}$:
\begin{enumerate}
\item[(i)]  $f^{2}(C)=C$;
\item[(ii)]  $\delta(C,D)=w_{0}\delta(f(C),f(D))w_{0}$;
\item[(iii)]  There exists $C\in\mathcal{C}$ such that $\delta(C,f(C))=w_{0}$.
\end{enumerate}
\end{defn}

\begin{note}  It follows from (ii) that a flip is an isometry of the building if and only if $w_{0}$ is central in $W$. In particular this holds for the building studied in this paper.
\end{note}

\subsection{The Apartments of $\Delta$}

Recall that $\Delta$ denotes the building associated to the geometry of totally isotropic subspaces of the unitary space $(V,\beta)$.

We now describe the apartments of $\Delta$.  This description is valid in a wider context, the interested reader can consult Chapter 7 of \cite{Taylor1992}. For concreteness we assume that $V$ is a left vector space over $\mathbb{F}$.  Because of this convention we have that for all $u$, $v\in V$, $\lambda$, $\mu\in \mathbb{F}$
$$
\beta(\lambda u,\mu v)=\lambda \beta(u,v)\sigma(\mu)=\lambda\sigma(\mu)\beta(u,v).
$$

\begin{cons}  Let $U_{1}=\langle e_{1},\ldots,e_{n}\rangle$ be a maximal totally isotropic subspace of $V$.  It can be shown (see for example Lemma 7.5 of \cite{Taylor1992}) that there is a totally isotropic subspace $U_{2}=\langle f_{1},\ldots,f_{n}\rangle$ such that $(e_{1},f_{1}),\ldots,(e_{n},f_{n})$ are pairwise orthogonal hyperbolic pairs, i.e. $\beta(e_{i},f_{j})=\delta_{ij}$, where $\delta_{ij}$ is the Kronecker $\delta$, and $\beta(e_{i},e_{j})=\beta(f_{i},f_{j})=0$ for all $i$, $j=1,\ldots,n$.  Recall that $\{e_{i},f_{i}\}_{i=1}^{n}$ forms a hyperbolic basis for $V$.  The \textbf{polar frame} associated to $\{U_{1},U_{2}\}$ is
$$
\mathcal{F}=\{\langle e_{i}\rangle,\langle f_{i}\rangle|1\leq i\leq n\}.
$$
Notice that it is the subspaces $\langle e_{i}\rangle$, $\langle f_{i}\rangle$ that define the polar frame, not the particular vectors $e_{i}$, $f_{i}$.  Hence different hyperbolic bases for $V$ may give rise to the same polar frame.

The apartment of $\mathcal{F}$ in $\Delta$ consists of all flags $F$ that are spanned by some subset of $\{e_{1},f_{1},\ldots,e_{n},f_{n}\}$.  This apartment is denoted $\Sigma(\mathcal{F})$.

Every apartment of $\Delta$ is of the form $\Sigma(\mathcal{F})$ for some polar frame $\mathcal{F}$.
\end{cons}

\begin{note}  In what follows, if $\mathcal{F}=\{\langle e_{i}\rangle,\langle f_{i}\rangle|i=1,\ldots,n\}$ is a polar frame we denote the apartment $\Sigma(\mathcal{F})$ by
$$
\Sigma(\mathcal{F})=\Sigma(\langle e_{i}\rangle,\langle f_{i}\rangle|i=1,\ldots,n)
$$
or if the context permits,
$$
\Sigma(\mathcal{F})=\Sigma(e_{i},f_{i}).
$$
This last notation is somewhat of an abuse, since the collection of pairwise orthogonal hyperbolic pairs is not uniquely determined by the polar frame, but if we start with this collection we know the frame, and hence the apartment.
\end{note}

Now that we know what the apartments of $\Sigma(\Gamma)$ look like, we can describe when two chambers are opposite.  The following appears as Exercise 9.16(ii) of \cite{Taylor1992}.

\begin{thm}\label{thm.oppositechambers}  Two chambers $C=(C_{i})_{i=1}^{h}$ and $D=(D_{i})_{i=1}^{h}$ in the building of a non-degenerate polar geometry $(W,\rho)$ of rank $h>0$ are opposite if and only if for all $i$,
$$
C_{i}^{\perp}\cap D_{i}=\{0\}.
$$
\end{thm}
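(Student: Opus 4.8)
The plan is to reduce both implications to a computation inside a single apartment. Recall (see \cite{AbBr2008}) that any two chambers of a building lie in a common apartment, that opposition in a spherical building means being at Weyl distance $w_{0}$, and that---because $w_{0}$ is central for the building under consideration, so that $w_{0}$ induces a type-preserving automorphism of each apartment---a chamber $D$ lying in an apartment $\Sigma$ together with $C$ is opposite to $C$ if and only if $D=\mathrm{op}_{\Sigma}(C)$, where $\mathrm{op}_{\Sigma}$ is the opposition involution of $\Sigma$. So I would begin by fixing a common apartment $\Sigma(\mathcal F)$ of $C$ and $D$, associated to a polar frame $\mathcal F=\{\langle e_{i}\rangle,\langle f_{i}\rangle\mid 1\le i\le h\}$ coming from pairwise orthogonal hyperbolic pairs $(e_{i},f_{i})$ (spanning a complement to the anisotropic part of the space, if any). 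After permuting the pairs and, where necessary, interchanging $e_{i}$ with $f_{i}$ inside a pair---operations which do not change $\Sigma(\mathcal F)$---we may assume $C_{i}=\langle e_{1},\dots,e_{i}\rangle$ for all $i$, while $D$, still a chamber of $\Sigma(\mathcal F)$, has each $D_{i}$ spanned by $i$ of the vectors $e_{1},f_{1},\dots,e_{h},f_{h}$, one drawn from each of $i$ distinct pairs.

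Next I would pin down $\mathrm{op}_{\Sigma(\mathcal F)}(C)$. In a polar-frame apartment the opposition involution is the automorphism induced by $\langle e_{i}\rangle\leftrightarrow\langle f_{i}\rangle$ for $1\le i\le h$ (each frame point $\langle e_{i}\rangle$ has $\langle f_{i}\rangle$ as its unique opposite frame point, since $\langle f_{i}\rangle$ is the only one not contained in $\langle e_{i}\rangle^{\perp}$); hence it sends $C$ to the chamber whose $i$th member is $\langle f_{1},\dots,f_{i}\rangle$. Combining this with the previous paragraph, $C$ and $D$ are opposite in the building if and only if $D_{i}=\langle f_{1},\dots,f_{i}\rangle$ for every $i$; in Coxeter terms this says that $\delta(C,D)$, regarded as an element of the hyperoctahedral group $\{\pm1\}^{h}\rtimes\mathrm{Sym}(h)$, equals the total sign change $w_{0}=(-1,\dots,-1;\,\mathrm{id})$.

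It then remains to check, inside the fixed apartment, the equivalence between ``$D_{i}=\langle f_{1},\dots,f_{i}\rangle$ for all $i$'' and ``$C_{i}^{\perp}\cap D_{i}=\{0\}$ for all $i$.'' From $\beta(e_{k},e_{j})=\beta(f_{k},f_{j})=0$, $\beta(e_{k},f_{j})=\delta_{kj}$, and the orthogonality of each $e_{j}$ to the anisotropic part, one computes $C_{i}^{\perp}=\langle e_{1},\dots,e_{h}\rangle\oplus\langle f_{i+1},\dots,f_{h}\rangle\oplus(\text{anisotropic part})$; in particular $C_{i}^{\perp}$ contains every $e_{k}$ and every $f_{m}$ with $m>i$, but no nonzero combination of $f_{1},\dots,f_{i}$. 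Hence if $D_{i}=\langle f_{1},\dots,f_{i}\rangle$ then $C_{i}^{\perp}\cap D_{i}=\{0\}$; and conversely, if $C_{i}^{\perp}\cap D_{i}=\{0\}$ then no spanning vector of $D_{i}$ can be one of the $e_{k}$'s or an $f_{m}$ with $m>i$, so all $i$ spanning vectors of $D_{i}$ lie among $f_{1},\dots,f_{i}$ and therefore $D_{i}=\langle f_{1},\dots,f_{i}\rangle$. This would prove the theorem.

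The step I expect to require the most care is the second one: making precise that $\delta(C,D)=w_{0}$ forces the hyperbolic pairs to be used up in the \emph{same order} along the two flags $C$ and $D$, and not merely that the top members $C_{h}$ and $D_{h}$ be opposite maximal totally isotropic subspaces. It is exactly here that the full family of conditions---for every $i$, not only $i=h$---is needed, and it rests on the explicit identification of the Weyl group of type $C_{h}$ with the hyperoctahedral group and of $w_{0}$ with $-\mathrm{id}$.
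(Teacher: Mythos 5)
Your argument is correct. Note, however, that the paper does not prove this statement at all: it is quoted as Exercise 9.16(ii) of Taylor's book, so there is no internal proof to compare against; what you have written is a legitimate way to establish the cited fact. Your route --- place $C$ and $D$ in a common apartment $\Sigma(\mathcal F)$, use that $\delta$ restricted to an apartment is the Coxeter distance so that the unique chamber of $\Sigma(\mathcal F)$ opposite $C$ is $\mathrm{op}_{\Sigma(\mathcal F)}(C)$, identify $w_{0}$ with $-\mathrm{id}$ in the hyperoctahedral group so that $\mathrm{op}_{\Sigma(\mathcal F)}(C)$ is the flag $\langle f_{1},\dots,f_{i}\rangle$ in the \emph{same} order, and then check the perpendicularity condition coordinatewise (correctly keeping the anisotropic kernel inside $C_{i}^{\perp}$) --- is sound, and the converse direction (frame vectors $e_{k}$, or $f_{m}$ with $m>i$, would land in $C_{i}^{\perp}\cap D_{i}$) is exactly right. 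One small correction: the uniqueness of the chamber of $\Sigma$ opposite to $C$ does not come from centrality of $w_{0}$; it comes from the fact that $D\mapsto\delta(C,D)$ is a bijection from the chambers of $\Sigma$ onto $W$, so exactly one chamber of $\Sigma$ is at distance $w_{0}$ from $C$. Centrality of $w_{0}$ is only relevant to type-preservation of the opposition map, which your argument never actually needs.
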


\section{First Results on Flips}

\subsection{The Unitary Building and its Flips}

 Let $\Delta$ denote the building associated to the polar geometry of $(V,\beta)$.  It is shown in Chapter 7 of \cite{Tits1974} that $\Delta$ is a building of type $C_{n}$.

\begin{defn}  A \textbf{similitude} of the polar space $(V,\beta)$ is a $\tau$-semilinear transformation ($\tau\in\mathrm{Aut}(\mathbb{F})$) $f$ of $V$ with the property that there exists some $a=\sigma(a)\in\mathbb{F}_{q}$ such that for all $u$, $v\in V$, $\beta(f(u),f(v))=a\tau(\beta(u,v))$.  If $a=1$, $f$ is an \textbf{isometry}.  If $a=-1$ then $f$ is called an \textbf{anti-isometry}.

The group of all similitudes of $(V,\beta)$ is denoted $\Gamma\mathrm{U}(V)$.  By $\mathrm{P}\Gamma\mathrm{U}(V)$ we denote the quotient of $\Gamma\mathrm{U}(V)$ by its center.
\end{defn}

\begin{thm}\label{thm.autofbuilding}  $\mathrm{Aut}(\Delta)\cong\mathrm{P}\Gamma\mathrm{U}(V)$.
\end{thm}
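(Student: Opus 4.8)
The plan is to show that $\mathrm{Aut}(\Delta)\cong\mathrm{P}\Gamma\mathrm{U}(V)$ by first producing a natural homomorphism $\mathrm{P}\Gamma\mathrm{U}(V)\to\mathrm{Aut}(\Delta)$ and then arguing it is both injective and surjective. For the map: any similitude $f\in\Gamma\mathrm{U}(V)$ permutes the totally isotropic subspaces of $(V,\beta)$ (since $\beta(f(u),f(v))=a\tau(\beta(u,v))$ vanishes exactly when $\beta(u,v)$ does), preserves inclusion, hence induces a type-preserving automorphism of the polar geometry and so of the chamber system $\Delta$; one checks this respects the $W$-distance $\delta$, so it is an automorphism of the building. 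The kernel of $f\mapsto(\text{automorphism of }\Delta)$ consists of those $f$ fixing every totally isotropic subspace, in particular every totally isotropic point, and a standard argument (a semilinear map fixing enough points of a projective space, here all isotropic points, which span $V$ when $n\geq 2$) forces $f$ to be a scalar; these are exactly the center of $\Gamma\mathrm{U}(V)$. This gives the injection $\mathrm{P}\Gamma\mathrm{U}(V)\hookrightarrow\mathrm{Aut}(\Delta)$.

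The substantive direction is surjectivity: every automorphism of $\Delta$ arises this way. First I would pass from the building $\Delta$ back to the polar geometry: an automorphism of $\Delta$ (the $C_n$-building) permutes the vertices according to type, and the vertices of type $1$ are the isotropic points, with incidence inherited from $V$; so an automorphism of $\Delta$ restricts to an automorphism of the polar space $(V,\beta)$ viewed as an incidence geometry. The key step is then a Fundamental-Theorem-of-Projective-Geometry–style result: an automorphism of the polar space of a non-degenerate unitary space of rank $n\geq 2$ (so $\dim V=2n\geq 4$) is induced by a semilinear similitude of $(V,\beta)$. Concretely, one uses that the isotropic points and the lines they span generate the full projective space $\mathrm{PG}(V)$ (valid because the Witt index is maximal and $\dim V\geq 4$), invokes the classical FTPG to get a semilinear map $g\in\mathrm{P\Gamma L}(V)$ inducing the given collineation, and then shows $g$ must be a similitude by comparing $\beta$ with $\beta\circ(g\times g)$: both are non-degenerate Hermitian (or related) forms with the same isotropic vectors, and two such forms on $V$ differ by a scalar — this is where one pins down the constant $a$ and the field automorphism $\tau$.

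The main obstacle I expect is exactly this last normalization: upgrading a collineation-realizing semilinear map to a genuine similitude, i.e.\ proving the ``two sesquilinear forms with the same radical/isotropic set are proportional'' statement in the unitary setting and checking the proportionality constant satisfies $a=\sigma(a)\in\mathbb{F}_q$. One should be careful that the relevant classical statements about FTPG and about forms determined (up to scalar) by their quadric require $n\geq 2$ and are what breaks for rank $1$; the hypothesis $\dim V\geq 4$ is used precisely here. Once $g$ is known to be a similitude, it maps to the given automorphism of $\Delta$ under the homomorphism above, giving surjectivity, and combined with the injectivity computed earlier we conclude $\mathrm{Aut}(\Delta)\cong\mathrm{P}\Gamma\mathrm{U}(V)$. (In fact this is a known result — e.g.\ it follows from Tits' classification of automorphisms of spherical buildings in \cite{Tits1974} together with the identification of the relevant algebraic group — so in the write-up one may legitimately cite that and only sketch the reduction to the polar space.)
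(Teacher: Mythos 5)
Your proposal is correct and follows essentially the same route as the paper's sketch: reduce automorphisms of $\Delta$ to automorphisms of the polar geometry, apply a Fundamental-Theorem-of-Projective-Geometry argument (valid since the polar space is embeddable and $\dim V\geq 4$) to realize them by semilinear similitudes, and quotient by the scalars to obtain $\mathrm{P}\Gamma\mathrm{U}(V)$. The only difference is that you spell out the injectivity/surjectivity bookkeeping and the form-proportionality normalization that the paper leaves implicit or delegates to the cited references.
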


\begin{proof}[Sketch of Proof]  Since the polar geometry $(V,\beta)$ is embeddable in a projective geometry and $\dim V\geq 4$, the Fundamental Theorem of Projective Geometry applies to ensure that every automorphism of the polar space is induced by a semilinear transformation of $V$.  It follows that the automorphism group of the polar geometry is isomorphic to $\mathrm{P}\Gamma\mathrm{U}(V)$.  Finally that every automorphism of the building arises from an automorphism of the geometry is shown on Page 264 of \cite{Pasini1994}.
\end{proof}

\begin{note}  The main interest of Theorem \ref{thm.autofbuilding} is that there is a surjective homomorphism $\Gamma\mathrm{U}(V)\twoheadrightarrow\mathrm{Aut}(\Delta)$ so that in the proof of Lemma \ref{lem.induceflip} we can argue that any flip of $\Delta$ is induced by some transformation in $\Gamma\mathrm{U}(V)$.
\end{note}

\begin{lem}\label{lem.squarescalar}  Let $\varphi$ be a flip of $\Delta$.  Then $\varphi$ is induced by a similitude $f$ of $V$ which satisfies $f^{2}=\lambda \mathrm{id}$ on $V$ for some scalar $\lambda$.
\end{lem}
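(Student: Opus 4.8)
The plan is to combine the identification of $\mathrm{Aut}(\Delta)$ from Theorem~\ref{thm.autofbuilding} with the observation that a similitude acting trivially on $\Delta$ must be a scalar. First I would check that $\varphi$ is an automorphism of $\Delta$. Applying property (ii) of a flip with $C,D$ replaced by $\varphi(C),\varphi(D)$ and using $\varphi^{2}=\mathrm{id}$ gives $\delta(\varphi(C),\varphi(D))=w_{0}\delta(C,D)w_{0}$; since $w_{0}$ is central in the Coxeter group of type $C_{n}$ and $w_{0}^{2}=1$, the right-hand side is just $\delta(C,D)$, so $\varphi$ preserves $\delta$ and (being an involution) is a bijection, hence an automorphism of $\Delta$. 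By Theorem~\ref{thm.autofbuilding} and the Note following it, the natural map $\pi\colon\Gamma\mathrm{U}(V)\to\mathrm{Aut}(\Delta)$ is surjective, so $\varphi$ is induced by some similitude $f$ of $V$.

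Next I would identify $\ker\pi$ with the group of scalar transformations $\{\lambda\,\mathrm{id}\mid\lambda\in\mathbb{F}^{\times}\}$. One inclusion is immediate, since a scalar fixes every subspace of $V$ and therefore induces the identity on $\Delta$. For the reverse inclusion, a similitude inducing the identity on $\Delta$ fixes every totally isotropic subspace, in particular every totally isotropic point and every totally isotropic line; because $(V,\beta)$ is non-degenerate of dimension $\geq 4$ the totally isotropic points span $V$ and the collinearity graph of the polar space is connected, and a semilinear map fixing each point of a spanning set through which pass totally isotropic lines is forced to be a scalar linear map. (This last point is essentially the content of the isomorphism $\mathrm{Aut}(\Delta)\cong\mathrm{P}\Gamma\mathrm{U}(V)$, via the Fundamental Theorem of Projective Geometry invoked in the proof of Theorem~\ref{thm.autofbuilding}.)

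Finally I would apply this to $f^{2}$: since $f$ induces $\varphi$, the similitude $f^{2}$ induces $\varphi^{2}=\mathrm{id}$, so $f^{2}\in\ker\pi$ and hence $f^{2}=\lambda\,\mathrm{id}$ on $V$ for some $\lambda\in\mathbb{F}^{\times}$, which is the assertion. As a by-product one sees that, since $f^{2}$ is linear, the field automorphism attached to $f$ has order dividing $2$, which will be convenient later. I do not expect any serious obstacle in this argument; the only step requiring genuine care is the description of $\ker\pi$ as the scalars, and even that is largely handed to us by Theorem~\ref{thm.autofbuilding}.
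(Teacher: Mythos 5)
Your proposal is correct and follows essentially the same route as the paper: use centrality of $w_{0}$ in type $C_{n}$ to see a flip preserves $\delta$ and hence is an automorphism, lift it via Theorem~\ref{thm.autofbuilding} to a similitude $f\in\Gamma\mathrm{U}(V)$, and note that $f^{2}$ lies in the kernel of the action on $\Delta$, which consists of the scalar transformations. Your extra justification that this kernel is exactly the scalars is more detail than the paper gives, but the argument is otherwise identical.
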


\begin{proof}  Recall first that the longest word $w_{0}$ of the Weyl group of type $C_{n}$ is central.  Thus a flip $\varphi$ in fact satisfies $\delta(u,v)=\delta(\varphi(u),\varphi(v))$ and so is an automorphism of $\Delta$.  It follows from Theorem \ref{thm.autofbuilding} that $\varphi$ is induced by some semilinear map $f\in\Gamma\mathrm{U}(V)$.

Since $\varphi^{2}=\mathrm{id}$ on $\Delta$ we see that $f^{2}$ is in the kernel of the action of $\Gamma\mathrm{U}(V)$ on $\Delta$, which is $Z(V)\cap\Gamma\mathrm{U}(V)$, the group of scalar transformations that also lie in $\Gamma\mathrm{U}(V)$.  Thus $f^{2}=\lambda\mathrm{id}$ on $V$ for some $\lambda\in\mathbb{F}$.  
\end{proof}

Recall that the norm $N_{\sigma}:\mathbb{F}\to\mathbb{F}_{q}$ defined by $N_{\sigma}(x)=x\sigma(x)$ is surjective since $\mathbb{F}$ is finite.

\begin{lem}\label{lem.rightaut}  Let $\varphi$ be induced by a similitude $f$ of $V$.  Then either $f$ is linear or $f$ is $\sigma$-semilinear.  Moreover if $\beta(f(u),f(v))=a\tau(\beta(u,v))$ and $f^{2}=\lambda\mathrm{id}$ then $N_{\sigma}(\lambda)=a^{2}$.
\end{lem}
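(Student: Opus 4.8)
The plan is to split into two parts: first show that $f$ is either linear or $\sigma$-semilinear, and then derive the numerical relation $N_\sigma(\lambda)=a^2$. Suppose $f$ is a $\tau$-semilinear similitude with multiplier $a\in\mathbb{F}_q$, so that $\beta(f(u),f(v))=a\,\tau(\beta(u,v))$ for all $u,v\in V$. The sesquilinearity of $\beta$ relates $\beta(\lambda u,v)$ to $\lambda\beta(u,v)$ and $\beta(u,\mu v)$ to $\beta(u,v)\sigma(\mu)$; applying $f$ and using that it is $\tau$-semilinear forces a compatibility between $\tau$ and $\sigma$. Concretely, I would compute $\beta(f(\lambda u),f(v))$ in two ways: on one hand it equals $a\tau(\beta(\lambda u,v))=a\tau(\lambda)\tau(\beta(u,v))$, and on the other hand $f(\lambda u)=\tau(\lambda)f(u)$ gives $\beta(\tau(\lambda)f(u),f(v))=\tau(\lambda)\beta(f(u),f(v))=\tau(\lambda)a\tau(\beta(u,v))$, which is automatically consistent. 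The real constraint comes from the second slot: $\beta(f(u),f(\mu v))=\beta(f(u),\tau(\mu)f(v))=\beta(f(u),f(v))\sigma(\tau(\mu))$, while also $\beta(f(u),f(\mu v))=a\tau(\beta(u,\mu v))=a\tau(\beta(u,v)\sigma(\mu))=a\tau(\beta(u,v))\tau(\sigma(\mu))=\beta(f(u),f(v))\tau(\sigma(\mu))$. Choosing $u,v$ with $\beta(u,v)\neq 0$ (possible since $\beta$ is non-degenerate), we may cancel to obtain $\sigma\tau=\tau\sigma$ on $\mathbb{F}$, which holds automatically, so this too is vacuous; the genuine restriction is instead that $f$ must send the Hermitian form to a $\tau$-twist of itself, and for a Hermitian (as opposed to bilinear) form this is only possible when $\tau$ commutes appropriately with $\sigma$ \emph{and} preserves the fixed field $\mathbb{F}_q$ setwise. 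Since $\mathrm{Gal}(\mathbb{F}/\mathbb{F}_p)$ is cyclic and $\sigma$ is the unique involution in it, the subgroup of automorphisms of $\mathbb{F}$ stabilizing $\mathbb{F}_q=\mathrm{Fix}(\sigma)$ together with the Hermitian structure is exactly $\{\mathrm{id},\sigma\}$; hence $\tau\in\{\mathrm{id},\sigma\}$, i.e. $f$ is linear or $\sigma$-semilinear.

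For the second assertion, apply $f$ twice. Using $\beta(f(u),f(v))=a\,\tau(\beta(u,v))$ once more with $u,v$ replaced by $f(u),f(v)$ gives
\[
\beta(f^2(u),f^2(v))=a\,\tau\bigl(\beta(f(u),f(v))\bigr)=a\,\tau\bigl(a\,\tau(\beta(u,v))\bigr)=a\,\tau(a)\,\tau^2(\beta(u,v)).
\]
Since $a\in\mathbb{F}_q=\mathrm{Fix}(\sigma)$ and $\tau\in\{\mathrm{id},\sigma\}$, we have $\tau(a)=a$, and since $\tau^2=\mathrm{id}$ in both cases, this reads $\beta(f^2(u),f^2(v))=a^2\,\beta(u,v)$. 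On the other hand $f^2=\lambda\,\mathrm{id}$ gives directly
\[
\beta(f^2(u),f^2(v))=\beta(\lambda u,\lambda v)=\lambda\,\sigma(\lambda)\,\beta(u,v)=N_\sigma(\lambda)\,\beta(u,v).
\]
Comparing the two expressions on a pair $u,v$ with $\beta(u,v)\neq 0$ yields $N_\sigma(\lambda)=a^2$, as claimed.

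The step I expect to require the most care is the dichotomy $\tau\in\{\mathrm{id},\sigma\}$: one must argue cleanly that a $\tau$-semilinear similitude of a Hermitian form forces $\tau$ to normalize $\sigma$ inside $\mathrm{Aut}(\mathbb{F})$ and fix the form's base field, rather than merely being any field automorphism. The cleanest route is probably to observe that $f$ induces an automorphism of the building $\Delta$ (via Theorem~\ref{thm.autofbuilding} and Lemma~\ref{lem.squarescalar}), that the companion automorphism of such a building-automorphism of a unitary polar space must lie in $\mathrm{Gal}(\mathbb{F}/\mathbb{F}_q)=\{\mathrm{id},\sigma\}$ because the Hermitian form is only defined up to $\mathbb{F}_q$-scalars and $\sigma$-conjugation, and then read off $\tau\in\{\mathrm{id},\sigma\}$. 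Once this is in place the numerical relation is a two-line computation as above.
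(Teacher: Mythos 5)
Your second computation (applying the similitude relation to $f(u),f(v)$, comparing with $f^{2}=\lambda\,\mathrm{id}$, and cancelling via non-degeneracy to get $N_{\sigma}(\lambda)=a^{2}$) is correct and is exactly the paper's argument. The gap is in the first half, the dichotomy $\tau\in\{\mathrm{id},\sigma\}$. As you yourself observe, your slot-by-slot compatibility computations are vacuous; but the claim you then fall back on is false. In a finite field every automorphism stabilizes every subfield setwise, and $\mathrm{Aut}(\mathbb{F})$ is abelian, so ``normalizing $\sigma$'' and ``fixing $\mathbb{F}_{q}$ setwise'' impose no restriction whatsoever on $\tau$. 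Worse, $\tau$-semilinear similitudes of a Hermitian form genuinely exist for \emph{every} $\tau\in\mathrm{Aut}(\mathbb{F})$: fix a hyperbolic basis, so the Gram matrix of $\beta$ has entries $0,1$ in the prime field, and let $f$ be the coordinatewise $p$-power Frobenius; then $\beta(f(u),f(v))=\tau(\beta(u,v))$ with $\tau(x)=x^{p}$, which is neither $\mathrm{id}$ nor $\sigma$ whenever $q$ is not prime. This is precisely why $\Gamma\mathrm{U}(V)$ is larger than the linear-and-$\sigma$-semilinear part, and it also kills your suggested ``cleanest route'': since $\mathrm{Aut}(\Delta)\cong\mathrm{P}\Gamma\mathrm{U}(V)$, the companion automorphism of an automorphism of the building can be any element of $\mathrm{Aut}(\mathbb{F})$, not just an element of $\mathrm{Gal}(\mathbb{F}/\mathbb{F}_{q})$.

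The hypothesis you never use in the first half, and which is the essential input, is that $\varphi$ is a flip, so that (as in Lemma \ref{lem.squarescalar}) $f^{2}=\lambda\,\mathrm{id}$ for some scalar $\lambda$. The paper's argument is one line: for $\eta\in\mathbb{F}$ and $0\neq u\in V$ one has $f^{2}(\eta u)=f(\tau(\eta)f(u))=\tau^{2}(\eta)f^{2}(u)=\tau^{2}(\eta)\lambda u$, while $f^{2}(\eta u)=\lambda\eta u$; hence $\tau^{2}(\eta)=\eta$ for all $\eta$, i.e.\ $\tau^{2}=\mathrm{id}$, and since $\mathrm{Aut}(\mathbb{F}_{q^{2}})$ is cyclic it contains a unique involution, namely $\sigma$, so $\tau\in\{\mathrm{id},\sigma\}$. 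With that step repaired, your second paragraph goes through verbatim, since it only needs $\tau^{2}=\mathrm{id}$ and $\tau(a)=a$ for $a\in\mathbb{F}_{q}$, both of which then hold.
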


\begin{proof}  Suppose $f$ is $\tau$-semilinear for $\tau\in\mathrm{Aut}(\mathbb{F})$.  Let $\eta\in\mathbb{F}$ and let $u\in V$ with $u\neq 0$.  Since $f^{2}=\lambda\mathrm{id}$ on $V$ it follows that $f^{2}(\eta u)=\lambda\eta u$.  But we can calculate directly that 
$$
f^{2}(\eta u)=f(\tau(\eta)f(u))=\tau^{2}(\eta)f^{2}(u)=\tau^{2}(\eta)\lambda u.
$$
Thus $\tau^{2}(\eta)=\eta$ and so $\tau^{2}$ is the identity of $\mathrm{Aut}(\mathbb{F})$.  Since $\mathrm{Aut}(\mathbb{F})$ contains a unique involution it follows that either $\tau= \mathrm{id}$ and $f$ is linear, or $\tau=\sigma$ and $f$ is $\sigma$-semilinear.

In order to prove the second part of the theorem, notice that 
$$
N_{\sigma}(\lambda)\beta(u,v)=\beta(f^{2}(u),f^{2}(v))=a\tau(\beta(f(u),f(v))).
$$
Since $\beta(f(u),f(v))=a\tau\beta(u,v)$ it follows that 
$$
a\tau(\beta(f(u),f(v)))=a\tau(a)\tau^{2}(\beta(u,v))=a^{2}\beta(u,v).
$$
This string of equalities relies on the fact that either $\tau=\mathrm{id}$ or $\tau=\sigma$, and in either case $\tau(a)=a$.

Putting these two strings of equalities together we see that for all $u$, $v\in V$,
$$
N_{\sigma}(\lambda)\beta(u,v)=a^{2}\beta(u,v)
$$
and so since $\beta$ is non-degenerate, $N_{\sigma}(\lambda)=a^{2}$.
\end{proof}

\begin{lem}\label{lem.induceflip}  Let $\varphi$ be a flip of $\Delta$.  Then one of the following holds:
\begin{enumerate}
\item[(i)]  $\varphi$ is induced by a linear isometry $f\in\mathrm{U}(V)$ satisfying $f^{2}=\mathrm{id}$ on $V$; or
\item[(ii)]  $\varphi$ is induced by a linear anti-isometry $f$ of $V$ satisfying $f^{2}=\mathrm{id}$ on $V$; or
\item[(iii)]  $\varphi$ is induced by a $\sigma$-semilinear isometry $f\in\Gamma\mathrm{U}(V)$ so that $f^{2}=\mathrm{id}$ on $V$.
\end{enumerate}
\end{lem}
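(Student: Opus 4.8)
The plan is to start from the structural facts already in hand and then kill the ambiguity in the scalar $\lambda$ by rescaling $f$ by a well-chosen field element, noting that this does not change the induced automorphism of $\Delta$ since scalar transformations lie in the kernel of $\Gamma\mathrm{U}(V)\twoheadrightarrow\mathrm{Aut}(\Delta)$. By Lemma~\ref{lem.squarescalar} there is a similitude $f$ inducing $\varphi$ with $f^{2}=\lambda\,\mathrm{id}$, and $\lambda\neq 0$ since $f$ is invertible; by Lemma~\ref{lem.rightaut}, $f$ is either linear or $\sigma$-semilinear, and if $\beta(f(u),f(v))=a\tau(\beta(u,v))$ then $a\in\mathbb{F}_{q}^{\times}$ (it is nonzero because $f$ is bijective and $\beta$ nondegenerate) and $N_{\sigma}(\lambda)=a^{2}$. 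The aim in each case is to produce $\mu\in\mathbb{F}^{\times}$ with $(\mu f)^{2}=\mathrm{id}$; then, applying Lemma~\ref{lem.rightaut} to $\mu f$ (which also induces $\varphi$ and has square scalar $1$), the new similitude factor $a'$ satisfies $(a')^{2}=N_{\sigma}(1)=1$, hence $a'=\pm 1$.

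In the linear case I would first check that $\lambda$ is a square in $\mathbb{F}=\mathbb{F}_{q^{2}}$, using $\lambda^{(q^{2}-1)/2}=(\lambda^{q+1})^{(q-1)/2}=N_{\sigma}(\lambda)^{(q-1)/2}=a^{q-1}=1$ because $a\in\mathbb{F}_{q}^{\times}$. Choosing $\mu$ with $\mu^{2}=\lambda^{-1}$ then gives $(\mu f)^{2}=\mu^{2}f^{2}=\mathrm{id}$, and by the previous paragraph $\mu f$ is a linear isometry (case (i)) if $a'=1$ and a linear anti-isometry (case (ii)) if $a'=-1$.

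In the $\sigma$-semilinear case I would first observe $\lambda\in\mathbb{F}_{q}$: comparing $(f\circ f^{2})(u)=\sigma(\lambda)f(u)$ with $(f^{2}\circ f)(u)=\lambda f(u)$ and using surjectivity of $f$ forces $\sigma(\lambda)=\lambda$. Then $\lambda^{-1}\in\mathbb{F}_{q}^{\times}$ lies in the image of the surjective norm $N_{\sigma}\colon\mathbb{F}^{\times}\to\mathbb{F}_{q}^{\times}$, so I may pick $\mu$ with $N_{\sigma}(\mu)=\lambda^{-1}$ and get $(\mu f)^{2}=\mu\sigma(\mu)f^{2}=N_{\sigma}(\mu)\lambda\,\mathrm{id}=\mathrm{id}$. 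If the resulting factor is $a'=1$ we are in case (iii).

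The main obstacle is to rule out the leftover possibility $a'=-1$, i.e. that $\varphi$ is induced by a $\sigma$-semilinear anti-isometry $g$ with $g^{2}=\mathrm{id}$; here I would invoke flip axiom (iii). For any $v\in V$, $\sigma$-Hermitian symmetry of $\beta$ gives $\beta(g(v),v)=\sigma(\beta(v,g(v)))$, while the anti-isometry relation applied to the pair $(v,g(v))$ together with $g^{2}=\mathrm{id}$ gives $\beta(g(v),v)=\beta(g(v),g^{2}(v))=-\sigma(\beta(v,g(v)))$; since $\mathrm{char}\,\mathbb{F}$ is odd this forces $\beta(v,g(v))=0$. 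Hence $\langle g(v)\rangle\subseteq\langle v\rangle^{\perp}$ for every nonzero $v$, so for every chamber $C=(C_{i})$ we have $C_{1}^{\perp}\cap\varphi(C_{1})=\varphi(C_{1})\neq 0$, and by Theorem~\ref{thm.oppositechambers} no chamber is opposite its $\varphi$-image, contradicting axiom (iii). Therefore $a'=1$. (The analogous computation for a linear anti-isometry only yields that $\beta(v,f(v))$ has trace zero rather than vanishes, which is precisely why case (ii) genuinely survives.)
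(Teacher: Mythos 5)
Your proof is correct, but it takes a genuinely different route from the paper at the two key points. The paper invokes flip axiom (iii) at the outset: an opposite pair of chambers lies in a unique apartment, which $\varphi$ must therefore stabilize, so the inducing similitude satisfies $f(e_{i})=\lambda_{i}f_{i}$, $f(f_{i})=\mu_{i}e_{i}$ on a hyperbolic basis, and the facts that $\lambda$ is a square (linear case) and that $\lambda\in\mathbb{F}_{q}$ and $a=1$ ($\sigma$-semilinear case) are then read off from these basis relations. You instead obtain the scalar normalizations purely algebraically --- $\lambda^{(q^{2}-1)/2}=N_{\sigma}(\lambda)^{(q-1)/2}=a^{q-1}=1$ in the linear case, and $\sigma(\lambda)=\lambda$ from $f\circ f^{2}=f^{2}\circ f$ in the semilinear case --- and you only use axiom (iii) at the very end, to exclude a $\sigma$-semilinear anti-isometric involution $g$ by showing $\beta(v,g(v))=0$ for all $v$, so that $C_{1}^{\perp}\cap\varphi(C_{1})\neq\{0\}$ for every chamber and Theorem \ref{thm.oppositechambers} rules out any opposite pair. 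Your route is more self-contained for this lemma (your squareness argument does not require first normalizing $f$ to an isometry, and your exclusion of the semilinear anti-isometry is a clean global statement about all chambers at once), whereas the paper's route has the side benefit of producing the explicit basis data $f(e_{i})=\lambda_{i}f_{i}$, $f(f_{i})=\mu_{i}e_{i}$, which is reused verbatim at the start of the proof of Main Theorem 1A; your closing remark that the analogous computation for a linear anti-isometry only yields $\mathrm{Tr}_{\sigma}(\beta(v,f(v)))=0$ correctly explains why case (ii) cannot be eliminated by the same trick.
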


\begin{proof}  By Lemma \ref{lem.squarescalar} $\varphi$ is induced by a similitude $f$ of $V$ with $f^{2}=\lambda\mathrm{id}$ on $V$ for some scalar $\lambda\in\mathbb{F}$.

Since $\varphi$ maps some chamber of $\Delta$ to an opposite, there is an apartment
$$
\Sigma=\Sigma(e_{i},f_{i}|i=1,\ldots,n)
$$
in which $\varphi$ sends the chamber $C=(C_{i})_{i=1}^{n}$ defined by $C_{i}=\langle e_{1},\ldots,e_{i}\rangle$ to its opposite in $\Sigma$, the chamber $D=(D_{i})_{i=1}^{n}$ defined by $D_{i}=\langle f_{1},\ldots,f_{i}\rangle$.

Since $C$ and $D$ are opposite, they lie in a unique apartment.  It follows that $\varphi$ preserves the apartment $\Sigma$.  In particular, since for each $i=1,\ldots,n$ we have $\langle e_{i}\rangle=C_{i}\cap D_{i-1}^{\perp}$ and $\langle f_{i}\rangle =D_{i}\cap C_{i-1}^{\perp}$ we see that $\varphi$ sends each 1-object to its opposite in $\Sigma$ and so for each $i=1,\ldots,n$ there exist scalars $\lambda_{i},\mu_{i}\in\mathbb{F}$ so that
\begin{eqnarray*}
f(e_{i})&=&\lambda_{i}f_{i}\\
f(f_{i})&=&\mu_{i}e_{i}.
\end{eqnarray*}

\begin{enumerate}
\item[(a)]  Suppose $f$ is linear and for all $u$, $v\in V$, $\beta(f(u),f(v))=a\beta(u,v)$.  Since $a\in\mathbb{F}_{q}$ there exists $\mu\in\mathbb{F}$ such that $N_{\sigma}(\mu)=a^{-1}$.  Replace $f$ by $\mu f$ and we see that for all $u$, $v\in V$,
$$
\beta((\mu f)(u),(\mu f)(v))=N_{\sigma}(\mu)\beta(f(u),f(v))=a^{-1}a\beta(u,v)=\beta(u,v).
$$
Thus $\mu f$ is an isometry which also induces $\varphi$.  

Suppose now that we have chosen an isometry $f$ which induces $\varphi$, and $f^{2}=\lambda\mathrm{id}$.  If $\lambda=1$ the the conclusion of (i) is satisfied and we're done.  So assume $\lambda\neq 1$.  Notice that we have the following equalities:
\begin{eqnarray}
\beta(u,v)=\beta(f(u),f(v))&=&\beta(f^{2}(u),f^{2}(v))=N_{\sigma}(\lambda)\beta(u,v)\\
\sigma(\lambda_{1})=\beta(e_{1},f(e_{1}))&=&\beta(f(e_{1}),f^{2}(e_{1}))=\beta(\lambda_{1}f_{1},\lambda e_{1})=\lambda_{1}\sigma(\lambda).
\end{eqnarray}
It follows from (1) that $N_{\sigma}(\lambda)=1$, and from (2) that $\lambda$ is a square in $\mathbb{F}$.  Choose $\eta\in\mathbb{F}$ so that $\eta^{2}=\lambda^{-1}$.  Since $N_{\sigma}$ is multiplicative, it follows that $N_{\sigma}(\eta)^{2}=N_{\sigma}(\eta^{2})=N_{\sigma}(\lambda)=1$ and so $N_{\sigma}(\eta)\in\{\pm 1\}$.

Let $g=\eta f$.  Then $g^{2}=\mathrm{id}$ on $V$, but we have paid a price.  We now have that
$$
\beta(g(u),g(v))=\beta(\eta f(u),\eta f(v))=N_{\sigma}(\eta)\beta(u,v).
$$
Thus either $g$ is an isometry of $(V,\beta)$ or $g$ is an anti-isometry of $(V,\beta)$.  If $g$ is an isometry the conclusion of (i) is satisfied, and if $g$ is an anti-isometry the conclusion of (ii) is satisfied.

\item[(b)]  Suppose now that $f$ is semilinear but not linear.  Then by Lemma \ref{lem.rightaut} $f$ is $\sigma$-semilinear.  We now show that we can replace $f$ by a scalar multiple $\mu f$ which still induces $\varphi$ so that $(\mu f)^{2}=\mathrm{id}$ on $V$.  Namely we have
\begin{eqnarray*}
\lambda e_{i}&=& f^{2}(e_{i})=\mu_{i}\sigma(\lambda_{i})e_{i} \\
\lambda f_{i}&=& f^{2}(f_{i})=\lambda_{i}\sigma(\mu_{i})f_{i}
\end{eqnarray*}
and so $\lambda=\mu_{i}\sigma(\lambda_{i})=\lambda_{i}\sigma(\mu_{i})$.  Hence $\lambda$ lies in $\mathbb{F}^{\sigma}=\mathbb{F}_{q}$, the fixed field of $\sigma$.  Since $\mathbb{F}$ is finite the norm map $N_{\sigma}:\mathbb{F}\to\mathbb{F}_{q}$ is surjective.  Thus there exists $\mu\in\mathbb{F}$ so that $N_{\sigma}(\mu)=\lambda^{-1}$.  Replacing $f$ by $\mu f$ does not affect $\varphi$, and so we may do this and assume $\lambda=1$.

In order to check that $f$ can be taken to be an isometry, by Lemma \ref{lem.squarescalar}, since $\lambda=1$ also $a^{2}=1$.  Hence $a\in\{\pm 1\}$.  The following calculation shows that $a^{-1}=1$ and so $a=1$ and we are in the situation of (iii):
$$
1=\beta(e_{1},f_{1})=a^{-1}\sigma^{-1}(\beta(f(e_{1}),f(f_{1})))=a^{-1}\sigma^{-1}(\beta(\lambda_{1}f_{1},\mu_{1}e_{1}))=a^{-1}\lambda=a^{-1}.\qedhere
$$
\end{enumerate}
\end{proof}

It is easy to see that the three cases in Lemma \ref{lem.induceflip} are mutually exclusive.

\begin{note}  It follows immediately from Lemma \ref{lem.rightaut} that if $f$ induces a flip $\varphi$ and $f^{2}=\mathrm{id}$ on $V$ then either $f$ is an isometry or $f$ is an anti-isometry.  What is interesting about Lemma \ref{lem.induceflip} is that if $f$ is linear we have to consider both the isometry and anti-isometry possibilities, whereas if $f$ is $\sigma$-semilinear we can assume it is an isometry.
\end{note}

\begin{defn}  Let $\varphi$ be a flip of $\Delta$.  We say $\varphi$ is \textbf{linear} if it is induced by a linear transformation of $V$.  We say $\varphi$ is \textbf{$\sigma$-semilinear} if it is induced by a $\sigma$-semilinear transformation of $V$.
\end{defn}

\begin{note}  From now on we identify $\varphi$ with a transformation of $V$ that induces $\varphi$ and satisfies the appropriate conclusion of Lemma \ref{lem.induceflip}.
\end{note}

We now define a new form on $V$ that will be important in the study of geometries induced by $\varphi$.

\begin{defn}  Given $f\in\Gamma\mathrm{U}(V)$, define $\beta_{f}(u,v)=\beta(u,f(v))$.
\end{defn}

\begin{lem}\label{lem.newform}  Let $f\in\Gamma\mathrm{U}(V)$ be $\tau$-semilinear, and assume $f^{2}=\mathrm{id}$.  Then $\beta_{f}$ is a non-degenerate, reflexive, $\sigma\tau$-sesquilinear form.  In particular,
\begin{enumerate}
\item[(i)]  if $\sigma=\tau$, then $\beta_{f}$ is a non-degenerate bilinear form, and
\item[(ii)]  if $f$ is linear, then $\beta_{f}$ is a non-degenerate $\sigma$-sesquilinear form.
\end{enumerate}
\end{lem}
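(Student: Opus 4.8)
The plan is to verify each assertion about $\beta_f$ directly from the definitions, using only that $f$ is an invertible $\tau$-semilinear similitude with $f^{2}=\mathrm{id}$.

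First I would establish sesquilinearity. Additivity of $\beta_f$ in each argument is inherited from $\beta$, so only the scalar behaviour needs checking. Linearity of $\beta$ in its first slot gives $\beta_f(\lambda u,v)=\beta(\lambda u,f(v))=\lambda\beta_f(u,v)$, so $\beta_f$ is linear on the left. On the right, combining $f(\mu v)=\tau(\mu)f(v)$ with the identity $\beta(u,\nu w)=\sigma(\nu)\beta(u,w)$ recorded just before Construction 1, one gets $\beta_f(u,\mu v)=\beta(u,\tau(\mu)f(v))=\sigma\tau(\mu)\,\beta_f(u,v)$. Hence $\beta_f$ is $\sigma\tau$-sesquilinear. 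The two special cases are then immediate: if $\tau=\sigma$ then $\sigma\tau=\sigma^{2}=\mathrm{id}$, so $\beta_f$ is bilinear, proving (i); if $f$ is linear then $\tau=\mathrm{id}$, so $\sigma\tau=\sigma$ and $\beta_f$ is $\sigma$-sesquilinear, proving (ii).

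Next I would handle non-degeneracy. If $\beta_f(u,v)=0$ for all $v$, then since $f$ is onto we have $\beta(u,w)=0$ for every $w\in V$, so $u=0$; thus the left radical of $\beta_f$ is trivial. Symmetrically, if $\beta_f(u,v)=0$ for all $u$, then $\beta(u,f(v))=0$ for all $u$, forcing $f(v)=0$ and hence $v=0$ since $f$ is injective. (Once reflexivity is known one radical would suffice, but both are easy.)

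The one genuinely non-formal point is reflexivity, and this is where I would use $f^{2}=\mathrm{id}$. Write $\beta(f(x),f(y))=a\,\tau(\beta(x,y))$ for the fixed scalar $a\in\mathbb{F}_q$ attached to the similitude $f$ (necessarily nonzero, as $\beta$ is non-degenerate). Substituting $x=v$, $y=f(u)$ and cancelling the outer $f^{2}$ via $f^{2}(u)=u$ yields
$$\beta(f(v),u)=a\,\tau\!\big(\beta(v,f(u))\big)=a\,\tau\!\big(\beta_f(v,u)\big).$$
Since $\beta$ is itself reflexive (it is Hermitian up to a scalar), $\beta(u,f(v))=0$ if and only if $\beta(f(v),u)=0$. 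Chaining these equivalences, $\beta_f(u,v)=\beta(u,f(v))=0$ iff $\beta(f(v),u)=0$ iff $a\,\tau(\beta_f(v,u))=0$ iff $\beta_f(v,u)=0$, the last step because $a\neq0$ and $\tau$ is injective. I expect this reflexivity argument — converting the symmetry of $\beta$ into that of $\beta_f$ by feeding $f(u)$ into the similitude identity and using $f^{2}=\mathrm{id}$ — to be the only step requiring any thought; the rest is bookkeeping, and parts (i) and (ii) fall out for free.
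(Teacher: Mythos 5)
Your proposal is correct and follows essentially the same route as the paper: the paper simply asserts that non-degeneracy follows from bijectivity of $f$, that left homogeneity is inherited from $\beta$, and that reflexivity and parts (i)--(ii) follow from direct calculation, and your argument is exactly those calculations carried out in full (the reflexivity step via $\beta(f(v),u)=a\,\tau(\beta_f(v,u))$ using $f^2=\mathrm{id}$ and $a\neq 0$ being the only substantive one). Nothing to add.
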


\begin{proof}  That $\beta_{f}$ is non-degenerate follows since $f$ is bijective.  Left homogeneity follows since $\beta$ is left homogeneous and $f$ acts in the second argument.  Reflexivity and both (i) and (ii) follow from direct calculations.
\end{proof}

\begin{note}  If $f$ induces a flip $\varphi$ as in Lemma \ref{lem.induceflip} then the choice $f$ determines $\beta_{\varphi}$ up to multiplication by $\pm 1$ if $f$ is linear, and up to multiplication by a scalar of norm 1 if $f$ is $\sigma$-semilinear.  It follows that choosing different representatives for $\varphi$ does not affect the results of Lemmas \ref{lem.sigmaform} and \ref{lem.linflipisoform}.
\end{note}

\begin{lem}\label{lem.sigmaform}  Let $\varphi$ be a $\sigma$-semilinear flip of $\Delta$.  Then $\beta_{\varphi}$ is a non-degenerate, reflexive, symmetric, bilinear form.
\end{lem}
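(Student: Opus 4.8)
The plan is to get everything except symmetry for free from Lemma~\ref{lem.newform}, and then to establish symmetry by a short computation that exploits the fact that $\varphi$ is a $\sigma$-semilinear \emph{isometry} with $\varphi^2=\mathrm{id}$.

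First I would recall that, by Lemma~\ref{lem.induceflip}(iii), we may take $\varphi$ to be a $\sigma$-semilinear isometry of $(V,\beta)$ satisfying $\varphi^2=\mathrm{id}$ on $V$. Applying Lemma~\ref{lem.newform} with $f=\varphi$ and $\tau=\sigma$, part~(i) tells us immediately that $\beta_{\varphi}$ is a non-degenerate, reflexive, bilinear form. So the only thing left to prove is that $\beta_{\varphi}$ is symmetric, i.e. $\beta_{\varphi}(u,v)=\beta_{\varphi}(v,u)$ for all $u,v\in V$, which unwinds to $\beta(u,\varphi(v))=\beta(v,\varphi(u))$.

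For this I would argue as follows. Since the unitary form $\beta$ satisfies $\beta(v,u)=\sigma(\beta(u,v))$ for all $u,v\in V$, in particular
$$
\beta(u,\varphi(v))=\sigma\!\left(\beta(\varphi(v),u)\right).
$$
On the other hand, using that $\varphi$ is a $\sigma$-semilinear isometry together with $\varphi^{2}=\mathrm{id}$,
$$
\beta(\varphi(v),u)=\beta\!\left(\varphi(v),\varphi(\varphi(u))\right)=\sigma\!\left(\beta(v,\varphi(u))\right).
$$
Substituting the second display into the first and using $\sigma^{2}=\mathrm{id}$ gives $\beta(u,\varphi(v))=\beta(v,\varphi(u))$, i.e. $\beta_{\varphi}(u,v)=\beta_{\varphi}(v,u)$. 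This is exactly symmetry, and combined with the consequences of Lemma~\ref{lem.newform} it finishes the proof.

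I do not anticipate any genuine obstacle; the statement is essentially Lemma~\ref{lem.newform}(i) plus the symmetry check. The only place warranting care is tracking the conjugation $\sigma$ through the isometry identity and the conjugate-symmetry of $\beta$, and making sure both invocations of $\varphi^{2}=\mathrm{id}$ (one buried in Lemma~\ref{lem.newform}, one explicit in the symmetry computation) are legitimate for the representative $\varphi$ chosen via Lemma~\ref{lem.induceflip}(iii).
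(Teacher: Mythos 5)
Your proof is correct and follows the same route as the paper: everything except symmetry is quoted from Lemma \ref{lem.newform}, and symmetry is verified by the short calculation combining the $\sigma$-hermitian property of $\beta$, the isometry identity $\beta(\varphi(x),\varphi(y))=\sigma(\beta(x,y))$, and $\varphi^{2}=\mathrm{id}$ — exactly the ``easy calculation'' the paper leaves to the reader. No gaps.
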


\begin{proof}  All except the fact that $\beta_{\varphi}$ is symmetric follows from Lemma \ref{lem.newform}.  That $\beta_{\varphi}$ is symmetric follows from an easy calculation.
\end{proof}

\begin{lem}\label{lem.linflipisoform}  Let $\varphi$ be a linear flip of $\Delta$.  
\begin{enumerate}
\item[(i)]  If $\varphi$ is a linear isometry then $\beta_{\varphi}$ is a non-degenerate, reflexive, $\sigma$-hermitian form.
\item[(ii)]  If $\varphi$ is a linear anti-isometry then $\beta_{\varphi}$ is a non-degenerate, reflexive, $\sigma$-antihermitian form.
\end{enumerate}
\end{lem}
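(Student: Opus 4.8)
The plan is to let Lemma~\ref{lem.newform} carry most of the weight and then verify the one remaining symmetry condition by a short direct calculation. Under the identification fixed just before Lemma~\ref{lem.newform}, a linear flip $\varphi$ is a linear transformation of $V$ lying in $\Gamma\mathrm{U}(V)$ with $\varphi^{2}=\mathrm{id}$, so Lemma~\ref{lem.newform}(ii) immediately gives that $\beta_{\varphi}$ is a non-degenerate, reflexive, $\sigma$-sesquilinear form. Hence in case (i) the only thing left to prove is $\beta_{\varphi}(v,u)=\sigma\!\left(\beta_{\varphi}(u,v)\right)$, and in case (ii) that $\beta_{\varphi}(v,u)=-\sigma\!\left(\beta_{\varphi}(u,v)\right)$; note that reflexivity is in any case also subsumed by either of these.

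For the symmetry relation I would argue both cases uniformly. Write $\varepsilon=1$ when $\varphi$ is an isometry and $\varepsilon=-1$ when $\varphi$ is an anti-isometry, so that $\beta(\varphi(x),\varphi(y))=\varepsilon\,\beta(x,y)$ for all $x,y\in V$. Using that $\beta$ is $\sigma$-hermitian and that $\varphi^{2}=\mathrm{id}$, I would compute $\beta_{\varphi}(v,u)=\beta(v,\varphi(u))=\sigma\!\left(\beta(\varphi(u),v)\right)=\sigma\!\left(\beta(\varphi(u),\varphi^{2}(v))\right)=\sigma\!\left(\varepsilon\,\beta(u,\varphi(v))\right)=\varepsilon\,\sigma\!\left(\beta_{\varphi}(u,v)\right)$, where in the last step I use that $\varepsilon=\pm1$ lies in the prime field and is therefore fixed by $\sigma$. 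Specializing to $\varepsilon=1$ yields the $\sigma$-hermitian property claimed in (i), and to $\varepsilon=-1$ the $\sigma$-antihermitian property claimed in (ii).

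I do not expect a genuine obstacle: the substantive structural content (non-degeneracy and $\sigma$-sesquilinearity) is already packaged in Lemma~\ref{lem.newform}, and what remains is exactly the two-line manipulation above. The only point that needs a little care is keeping the sides of the semilinearity straight under the paper's left-vector-space convention $\beta(\lambda u,\mu v)=\lambda\sigma(\mu)\beta(u,v)$ — in particular applying the hermitian identity for $\beta$ in the form $\beta(x,y)=\sigma(\beta(y,x))$ and reading "$\sigma$-hermitian'' (resp. "$\sigma$-antihermitian'') for $\beta_{\varphi}$ with the matching handedness — after which both parts follow from the single displayed chain of equalities.
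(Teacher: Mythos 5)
Your proposal is correct and follows essentially the same route as the paper: cite Lemma~\ref{lem.newform}(ii) for non-degeneracy, reflexivity, and $\sigma$-sesquilinearity, then verify the $\sigma$-hermitian (resp.\ $\sigma$-antihermitian) identity by the short direct computation $\beta_{\varphi}(v,u)=\sigma(\beta(\varphi(u),\varphi^{2}(v)))=\varepsilon\,\sigma(\beta_{\varphi}(u,v))$, which is exactly the ``easy calculation'' the paper leaves to the reader. Your uniform treatment of both cases via $\varepsilon=\pm1$ is a clean way to write it out, but it is not a different argument.
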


\begin{proof}  All that remains is to show that in (i) the form is hermitian and in (ii) the form is antihermitian.  Both follow from easy calculations.
\end{proof}

\begin{defn}  Given a flip $\varphi$, set $Q_{\varphi}(v)=\frac{1}{2}\beta_{\varphi}(v,v)$.
\end{defn}

Notice that $Q_{\varphi}$ is the pseudo-quadratic form that polarizes to $\beta_{\varphi}$.

\subsection{The Chamber System Induced by a Flip}

We now define a chamber system left invariant by a flip.  We shall use this to classify $\sigma$-semilinear flips, but we will also be interested in these for their automorphism groups.

\begin{defn}  By $\Delta^{\varphi}$ we denote the collection of chambers of $\Delta$ sent to an opposite chamber by $\varphi$.
\end{defn}

\begin{defn}  Recall that a pair of vectors $u$, $v\in V$ are \textbf{$\beta$-orthogonal} if $\beta(u,v)=0$, this is denoted $u\perp v$.  The vectors are \textbf{$\beta_{\varphi}$-orthogonal} if $\beta_{\varphi}(u,v)=0$, this is denoted $u\perp_{\varphi} v$.  The vectors are \textbf{biorthogonal} if $\beta(u,v)=\beta_{\varphi}(u,v)=0$, this is denoted $u\dperp v$.  If $U$ is a subspace of $V$ we use $U^{\perp}$, $U^{\perp_{\varphi}}$, and $U^{\dperp}$ to refer to the $\beta$-orthogonal complement, $\beta_{\varphi}$-orthogonal complement, and biorthogonal complement respectively.
\end{defn}

Recall that a pair of $\beta$ isotropic vectors $u$, $v$ is called a \textbf{hyperbolic pair} if $\beta(u,v)=1$.  We define a \textbf{pre-hyperbolic pair} to be a pair of $\beta$ isotropic vectors $u$, $v$ with $\beta(u,v)\neq 0$.  This is not standard, but there are instances where the distinction will be important.

\begin{thm}\label{thm.idchambers}  Let $\varphi$ be a flip of the unitary building $\Delta$.
\begin{enumerate}
\item[(1)]  A chamber $C=(C_{i})_{i=1}^{n}$ of $\Delta$ lies in $\Delta^{\varphi}$ if and only if $C_{i}$ is non-degenerate with respect to $\beta_{\varphi}$ for all $i=1,\ldots,n$.

\item[(2)]  If $\{e_{i},f_{i}\}_{i=1}^{n}$ is a $\beta$ pre-hyperbolic basis for $V$ with $\varphi(e_{i})=f_{i}$, then the chambers $(C_{i})_{i=1}^{n}$ and $(D_{i})_{i=1}^{n}$ defined by $C_{i}=\langle e_{1},\ldots,e_{i}\rangle$ and $D_{i}=\langle f_{1},\ldots,f_{i}\rangle$ are opposite in $\Delta$ and so lie in $\Delta^{\varphi}$.  Conversely if $C=(C_{i})_{i=1}^{n}$ is a chamber of $\Delta^{\varphi}$, then there is a $\beta$ pre-hyperbolic basis $\{e_{i},f_{i}\}$ for $V$ so that $C_{i}=\langle e_{1},\ldots,e_{i}\rangle$, $\varphi(e_{i})=f_{i}$ for all $i=1,\ldots,n$, and the chamber $D=(D_{i})_{i=1}^{n}$ defined by $D_{i}=\langle f_{1},\ldots,f_{i}\rangle$ lies in $\Delta^{\varphi}$ and is opposite to $C$.
\end{enumerate}
\end{thm}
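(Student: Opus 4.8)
The plan is to translate the opposition criterion of Theorem \ref{thm.oppositechambers} into a statement about the form $\beta_\varphi$, using the defining identity $\beta_\varphi(u,v)=\beta(u,\varphi(v))$ together with the fact that $\varphi$ preserves $\beta$-orthogonality (being a similitude). The central observation is that for any subspace $U$ we have $\varphi(U^{\perp_\varphi})=U^{\perp}$ up to the semilinear twist: indeed $v\in U^{\perp_\varphi}$ means $\beta(u,\varphi(v))=0$ for all $u\in U$, i.e. $\varphi(v)\in U^{\perp}$. So $U^{\perp_\varphi}=\varphi^{-1}(U^{\perp})=\varphi(U^{\perp})$ since $\varphi^2=\mathrm{id}$. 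Combining this with the criterion $C_i^{\perp}\cap\varphi(C_i)=\{0\}$ for $C$ to be opposite to $\varphi(C)$ (which is what ``$C\in\Delta^\varphi$'' unwinds to, since $\varphi$ acts on the building and sends $C$ to an opposite chamber of $\varphi(C)$... more precisely the pair $C,\varphi(C)$ opposite), we will get the equivalence with non-degeneracy.

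For part (1): First I would note that $C\in\Delta^\varphi$ means $\varphi(C)$ is opposite to $C$ in $\Delta$, i.e. by Theorem \ref{thm.oppositechambers}, $C_i^{\perp}\cap\varphi(C)_i=\{0\}$ for all $i$, where $\varphi(C)_i=\varphi(C_i)$. Apply $\varphi^{-1}=\varphi$ to this (a bijection, hence preserves trivial intersection): $\varphi(C_i^{\perp})\cap C_i=\{0\}$. By the observation above, $\varphi(C_i^{\perp})=C_i^{\perp_\varphi}$, so the condition becomes $C_i^{\perp_\varphi}\cap C_i=\{0\}$, which is precisely the statement that $\beta_\varphi$ restricted to $C_i$ is non-degenerate (the radical of $\beta_\varphi|_{C_i}$ is $C_i\cap C_i^{\perp_\varphi}$). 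One subtlety to address: $\beta_\varphi$ may be only reflexive-sesquilinear, not symmetric bilinear in general, so one should make sure that ``non-degenerate on $C_i$'' is the two-sided condition; by Lemmas \ref{lem.sigmaform} and \ref{lem.linflipisoform} the form $\beta_\varphi$ is reflexive (hermitian, antihermitian, or symmetric), so left and right radicals on a subspace coincide and this is fine.

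For part (2): The forward direction is a computation. Given the pre-hyperbolic basis with $\varphi(e_i)=f_i$, one checks directly that $C_i^{\perp}\cap D_i=\{0\}$ for all $i$ using $\beta(e_j,f_k)=\beta_\varphi(e_j,e_k)$ and the fact that the Gram matrix of $\beta_\varphi$ on $\langle e_1,\dots,e_i\rangle$ is invertible (it is the matrix $(\beta(e_j,f_k))$, which by the pre-hyperbolic assumption is a triangular-ish nonsingular matrix — here I would note one must be slightly careful: the defining property of a pre-hyperbolic basis should force $\beta(e_j,f_k)=0$ for $j\neq k$; if the statement only assumes $\beta(e_i,f_i)\neq0$ one needs the orthogonality across pairs, which I expect is part of the intended meaning of ``pre-hyperbolic basis''), so $C$ and $D$ are opposite and by part (1) both lie in $\Delta^\varphi$. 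The converse is the substantive part: given $C\in\Delta^\varphi$, I would build the $e_i$ inductively. Having chosen $e_1,\dots,e_{i-1}$, set $e_i$ to be a generator of the one-dimensional space $C_i\cap(C_{i-1})^{\dperp}$... more precisely one wants $e_i\in C_i$ with $e_i$ biorthogonal to $e_1,\dots,e_{i-1}$ and $f_i:=\varphi(e_i)$; the non-degeneracy of $\beta_\varphi$ on each $C_i$ (from part (1)) guarantees such a choice exists and that $\beta(e_i,f_i)=\beta_\varphi(e_i,e_i)\neq0$, so each $(e_i,f_i)$ is a pre-hyperbolic pair. One then verifies $\beta(e_j,f_k)=\beta_\varphi(e_j,e_k)=0$ for $j<k$ by the biorthogonal construction, and $\beta(f_j,f_k)=\beta(\varphi(e_j),\varphi(e_k))=$ (twist of) $\beta(e_j,e_k)$; the $e_j$ are $\beta$-isotropic since they lie in the totally isotropic space $C_n$, hence the $f_j$ are $\beta$-isotropic too, making $\{e_i,f_i\}$ a genuine pre-hyperbolic basis. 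Finally $D_i=\langle f_1,\dots,f_i\rangle=\varphi(C_i)$, so $D=\varphi(C)$ is opposite to $C$ and lies in $\Delta^\varphi$.

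The main obstacle I anticipate is the inductive construction in the converse of part (2): one must verify that the successively chosen $e_i$ really do span the $C_i$ (i.e. that $C_i\cap C_{i-1}^{\perp_\varphi}$ is one-dimensional and not contained in $C_{i-1}$), and that the resulting $f_i=\varphi(e_i)$ together with the $e_i$ form a basis of $V$ — this last point uses that $C_n$ is maximal totally isotropic and $\varphi(C_n)=D_n$ is a complementary maximal totally isotropic space, which follows because $C$ and $\varphi(C)$ are opposite. Keeping track of the semilinear twist $\sigma$ in the relations among the $\beta(f_j,f_k)$ requires care but is routine once the linear-algebraic skeleton is in place.
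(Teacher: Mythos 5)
Your proposal is correct and follows essentially the same route as the paper: part (1) by combining the opposition criterion of Theorem \ref{thm.oppositechambers} with the identity $C_i^{\perp_\varphi}=\varphi(C_i^{\perp})$ (so that the $\beta_{\varphi}$-radical of $C_i$ corresponds to $\varphi(C_i)\cap C_i^{\perp}$), and part (2) by reducing the forward direction to (1) and, for the converse, greedily choosing $e_i\in C_i\cap\varphi(C_{i-1})^{\perp}$ and setting $f_i=\varphi(e_i)$, exactly as in the paper. Your side remarks (reflexivity of $\beta_\varphi$, the reading of ``pre-hyperbolic basis'' as pairwise orthogonal pre-hyperbolic pairs, and the dimension count showing the $e_i$ span $C_i$) are all consistent with the paper's intended argument.
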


\begin{proof}
\begin{enumerate}
\item[(1)]  By assumption we may view $\varphi$ as acting on the vector space $V$, and have $\varphi^{2}=\mathrm{id}$ on $V$.  Suppose $C=(C_{i})_{i=1}^{n}$ is a chamber of $\Delta^{\varphi}$.  Then since $C$ is also a chamber of $\Delta$, each $C_{i}$ is $\beta$ isotropic.

    Recall from Theorem \ref{thm.oppositechambers} that $C$ is opposite to $\varphi(C)$ in $\Delta$ if and only if for each $i$,
    $$
    \varphi(C_{i})\cap C_{i}^{\perp}=\{0\}.
    $$
Notice that 
$$
\varphi(\mathrm{Rad}_{\beta_{\varphi}}(C_{i}))=\varphi(C_{i}\cap C_{i}^{\perp_{\varphi}})=\varphi(C_{i})\cap C_{i}^{\perp}
$$
where the last equality is justified since $\varphi$ is a bijective transformation of $V$.  Thus the $\beta_{\varphi}$ radical of $C_{i}$ is $\{0\}$ if and only if $\varphi(C_{i})\cap C_{i}^{\perp}=\{0\}$.

\item[(2)]  The first part follows from (1) by noting that if $\{e_{i},f_{i}\}$ is a $\beta$ pre-hyperbolic basis for $V$ with $f_{i}=\varphi(e_{i})$ for all $i$, then for each $i$, $\{e_{1},\ldots,e_{i}\}$ and $\{f_{1},\ldots,f_{i}\}$ form $\beta_{\varphi}$ orthogonal bases for $C_{i}$ and $D_{i}$ respectively, and satisfy the hypotheses of (1).

    Conversely suppose $C=(C_{i})_{i=1}^{n}$ is a chamber of $\Delta^{\varphi}$.  Choose $e_{1},\ldots,e_{n}$ as follows. Pick $e_{1}\in C_{1}-\{0\}$.  Then pick $e_{i}\in C_{i}\cap \varphi(C_{i-1})^{\perp}$.  The vectors $e_{1},\ldots,e_{n}$ are pairwise biorthogonal.  Moreover none can be $\beta_{\varphi}$ isotropic as this would contradict the $\beta_{\varphi}$ non-degeneracy of $C_{i}$.  Finally, define $f_{i}=\varphi(e_{i})$ for $i=1,\ldots,n$.  Then $\{e_{i},f_{i}\}_{i=1}^{n}$ gives the desired basis.\qedhere
\end{enumerate}
\end{proof}

\begin{lem}\label{lem.converse}  If $f\in\Gamma\mathrm{U}(V)$ satisfies any of (i)-(iv) in the statement of Main Theorem 1 then $f$ induces a flip of $\Delta$.
\end{lem}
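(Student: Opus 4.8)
The plan is to verify, for a transformation $f$ satisfying any one of (i)--(iv), the three defining axioms of a flip: $f^2 = \mathrm{id}$ on $\mathcal{C}$, the twisted-isometry condition $\delta(C,D) = w_0 \delta(f(C),f(D)) w_0$, and the existence of a chamber mapped to an opposite. Since the Weyl group of type $C_n$ has central longest word $w_0$, the twisted-isometry condition collapses to $\delta(C,D) = \delta(f(C),f(D))$, i.e. $f$ must induce an automorphism of $\Delta$. So the real content is: (a) each of the four transformations lies in $\Gamma\mathrm{U}(V)$ and hence induces an automorphism of $\Delta$ by Theorem \ref{thm.autofbuilding}; (b) its square is scalar and in fact acts as the identity on $\Delta$ (for which it suffices, after rescaling as in Lemma \ref{lem.induceflip}, that $f^2 = \mathrm{id}$ on $V$, which is built into (i)--(iv)); and (c) the given hyperbolic (or pre-hyperbolic) basis exhibits a chamber opposite to its image.

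First I would handle the membership in $\Gamma\mathrm{U}(V)$. For (i) and (iii), $f$ is declared an isometry, so this is immediate; for (ii), $f$ is a linear anti-isometry, so $\beta(f(u),f(v)) = -\beta(u,v)$, and $-1 = \sigma(-1) \in \mathbb{F}_q$, so $f$ is a similitude. For (iv), I would compute directly on the hyperbolic basis that $f$ is a $\sigma$-semilinear isometry: using $\beta(e_i,f_i) = 1$, $\beta(f_n,e_n) = \sigma(1) = 1$ and $f(e_n) = \lambda f_n$, $f(f_n) = \sigma(\lambda^{-1}) e_n$, one checks $\beta(f(e_n),f(f_n)) = \lambda \sigma(\sigma(\lambda^{-1})) = \lambda \lambda^{-1} = 1 = \sigma(\beta(e_n,f_n))$, and similarly all other pairings are preserved; for the first $n-1$ indices $f$ just swaps $e_i \leftrightarrow f_i$, which is visibly a $\sigma$-semilinear isometry on each hyperbolic plane. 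In each case, once $f \in \Gamma\mathrm{U}(V)$ the remarks following Theorem \ref{thm.autofbuilding} give an automorphism $\varphi$ of $\Delta$; and since $f^2 = \mathrm{id}$ on $V$ in all four cases, $\varphi^2 = \mathrm{id}$ on $\Delta$, giving axiom (i) of a flip and axiom (ii) via centrality of $w_0$.

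Next I would verify the opposite-chamber axiom (iii). In cases (i) and (iii), $f(e_i) = f_i$ on an honest hyperbolic basis, so $\{e_i,f_i\}$ is in particular a $\beta$ pre-hyperbolic basis with $\varphi(e_i) = f_i$; Theorem \ref{thm.idchambers}(2) then produces the chamber $C_i = \langle e_1,\ldots,e_i\rangle$ mapped to the opposite chamber $D_i = \langle f_1,\ldots,f_i\rangle$. In case (ii), $f(e_i) = \alpha f_i$ with $\alpha$ of trace $0$ (hence $\alpha \ne 0$), so the rescaled vectors $e_i' = e_i$, $f_i' = \alpha f_i$ still form a $\beta$ pre-hyperbolic basis and $\varphi(e_i') = f_i'$, and the same theorem applies; alternatively one checks the condition $\varphi(C_i) \cap C_i^{\perp} = \{0\}$ of Theorem \ref{thm.oppositechambers} directly. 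Case (iv) is the same after rescaling $e_n$ in the last plane. In every case the chamber $(C_i)$ with $C_i = \langle e_1,\ldots,e_i\rangle$ works.

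I do not expect a serious obstacle here: the statement is essentially a packaging of Lemma \ref{lem.induceflip}, Theorem \ref{thm.autofbuilding}, and Theorem \ref{thm.idchambers}, and the only computations are the routine bilinear/sesquilinear bookkeeping verifying that the transformation in (iv) (and the rescaled ones in (ii)) genuinely preserve $\beta$ up to the appropriate scalar and satisfy $f^2 = \mathrm{id}$. The one point deserving slight care is case (iv): one must confirm both that $f$ is a $\sigma$-semilinear \emph{isometry} (not merely a similitude) and that $f^2 = \mathrm{id}$ on $V$ --- the latter because $f^2(e_n) = f(\lambda f_n) = \sigma(\lambda) f(f_n) = \sigma(\lambda)\sigma(\lambda^{-1}) e_n = e_n$ and symmetrically $f^2(f_n) = f_n$, while on the first $n-1$ planes $f^2$ is visibly the identity. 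Once these are checked, the three flip axioms follow as above.
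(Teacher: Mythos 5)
Your proposal is correct and follows essentially the same route as the paper: since $f\in\Gamma\mathrm{U}(V)$ has order $2$ it induces an involutory automorphism of $\Delta$ (axioms (i)--(ii) via centrality of $w_0$), and the chamber $C_i=\langle e_1,\ldots,e_i\rangle$ is sent to an opposite chamber, which the paper checks directly by $C_i^{\perp}\cap\varphi(C_i)=\{0\}$ as in Theorem \ref{thm.oppositechambers} --- exactly the alternative you mention, and preferable to routing through Theorem \ref{thm.idchambers}, whose statement formally presupposes that $\varphi$ is already a flip. Your additional verifications (membership in $\Gamma\mathrm{U}(V)$, $f^2=\mathrm{id}$ in case (iv)) are harmless bookkeeping already built into hypotheses (i)--(iv).
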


\begin{proof}  Since $f\in\Gamma\mathrm{U}(V)$, $f$ induces an automorphism of $\Delta$ and by assumption $f$ has order 2.  It therefore suffices to show that $f$ maps some chamber of $\Delta$ to an opposite chamber.  Let $\{e_{i},f_{i}\}_{i=1}^{n}$ be a hyperbolic basis for $V$ as in the hypotheses of Main Theorem 1.  Let $C_{i}=\langle e_{1},\ldots,e_{i}\rangle$ for $i=1,\ldots,n$.  Then $\varphi(C_{i})=\langle f_{1},\ldots,f_{i}\rangle$, and clearly $C_{i}^{\perp}\cap\varphi(C_{i})=\{0\}$ for all $i$.  Hence the chamber $C=(C_{i})_{i=1}^{n}$ is sent to an opposite chamber by $f$, and so $f$ induces a flip of $\Delta$.
\end{proof}

\subsection{The Geometry Induced by a Flip}\label{sec.geom}

We now define the geometry corresponding to the chamber system induced by a flip.

\begin{defn}\label{def.gamma}  Let $\Gamma(n,q)$ denote the set of all $\beta$-isotropic and $\beta_{\varphi}$ non-degenerate subspaces $U$ of $V$.  Let $I=\{1,\ldots,n\}$ and define $\tau:\Gamma(n,q)\to I$ by $\tau(U)=\dim(U)$.  Finally, define a relation $\sim$ on $\Gamma(n,q)$ by $U\sim W$ if $U\subseteq W$ or $W\subseteq U$.  $(\Gamma(n,q),\tau,I,\sim)$ is the \textbf{geometry induced by $\varphi$}.
\end{defn}

\begin{lem}  Let $U$ be a $\beta$-isotropic subspace of $V$.  Then $U\in\Gamma(n,q)$ if and only if $U^{\perp}\cap\varphi(U)=\{0\}$.
\end{lem}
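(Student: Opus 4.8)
By Definition~\ref{def.gamma}, $U\in\Gamma(n,q)$ precisely when $U$ is $\beta$-isotropic and $\beta_{\varphi}$ non-degenerate. Since $U$ is assumed $\beta$-isotropic, it remains only to show that $U$ is $\beta_{\varphi}$ non-degenerate if and only if $U^{\perp}\cap\varphi(U)=\{0\}$. This is exactly the single-subspace version of part (1) of Theorem~\ref{thm.idchambers}, and the argument runs in parallel. First I would recall that the restriction of $\beta_{\varphi}$ to $U$ has radical $\mathrm{Rad}_{\beta_{\varphi}}(U)=U\cap U^{\perp_{\varphi}}$, so that $U$ is $\beta_{\varphi}$ non-degenerate exactly when $U\cap U^{\perp_{\varphi}}=\{0\}$. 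Note here that $U^{\perp_{\varphi}}$ is indeed a subspace: by Lemma~\ref{lem.newform} the form $\beta_{\varphi}$ is (reflexive and) $\sigma\tau$-sesquilinear, so $\beta_{\varphi}$-orthogonal complements are subspaces.

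The key step is the identity $\varphi(U^{\perp_{\varphi}})=U^{\perp}$. For $v\in V$ one has, using $\beta_{\varphi}(u,v)=\beta(u,\varphi(v))$, that $v\in U^{\perp_{\varphi}}$ iff $\beta(u,\varphi(v))=0$ for all $u\in U$ iff $\varphi(v)\in U^{\perp}$; since $\varphi$ is a (semilinear) bijection of $V$, this yields $\varphi(U^{\perp_{\varphi}})=U^{\perp}$. Because $\varphi$ is bijective it also commutes with intersection of subspaces, so
$$
\varphi\bigl(\mathrm{Rad}_{\beta_{\varphi}}(U)\bigr)=\varphi(U\cap U^{\perp_{\varphi}})=\varphi(U)\cap\varphi(U^{\perp_{\varphi}})=\varphi(U)\cap U^{\perp}.
$$
Applying injectivity of $\varphi$ once more, $\mathrm{Rad}_{\beta_{\varphi}}(U)=\{0\}$ if and only if $\varphi(U)\cap U^{\perp}=\{0\}$. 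Combining this with the $\beta$-isotropy hypothesis gives the claimed equivalence.

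\textbf{Main obstacle.} There is no genuine difficulty here; the statement is essentially a bookkeeping consequence of the computation already performed in the proof of Theorem~\ref{thm.idchambers}. The only point that warrants a word of care is that $\varphi$ need not be linear—it may be $\sigma$-semilinear—so one should observe explicitly that a semilinear bijection still maps subspaces to subspaces, preserves inclusions and intersections, and sends $\{0\}$ to $\{0\}$, which is all that is used above. Everything else is immediate from the definitions of $\beta_{\varphi}$, $\perp_{\varphi}$, and non-degeneracy of a form restricted to a subspace.
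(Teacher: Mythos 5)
Your proposal is correct and follows essentially the same route as the paper: the paper's proof is the one-line observation that $\beta_{\varphi}$ non-degeneracy of $U$ is equivalent to $U^{\perp}\cap\varphi(U)=\{0\}$, with the radical computation $\varphi(\mathrm{Rad}_{\beta_{\varphi}}(U))=\varphi(U)\cap U^{\perp}$ you spell out being exactly the one already carried out in the proof of Theorem~\ref{thm.idchambers}(1). Your extra care about reflexivity and semilinearity is fine but not a different argument.
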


\begin{proof}  $U$ is $\beta_{\varphi}$ non-degenerate if and only if $U^{\perp}\cap\varphi(U)=\{0\}$.
\end{proof}

\begin{note}  It is clear that $(\Gamma(n,q),I,\tau,\sim)$ is a pregeometry, since $\Gamma(n,q)$ is a subset of the set of objects of the full projective geometry of $V$, $\mathcal{P}(V)$, and we have inherited the type and incidence structure from $\mathcal{P}(V)$.  We will prove in Theorem \ref{thm.biggeometry} that $\Gamma(n,q)$ is in fact a geometry.  In order to achieve this goal we will have to study the properties of the vector space endowed with both forms $\beta$ and $\beta_{\varphi}$ in more detail.
\end{note}

\begin{defn}  A \textbf{point} of $\Gamma(n,q)$  is an object of $\Gamma(n,q)$ of type 1.  A \textbf{line} of $\Gamma(n,q)$ is an object of $\Gamma(n,q)$ of type 2.
\end{defn}

\begin{note}  From now on, we have chosen to identify a point of the geometry, which is really a 1-dimensional subspace of $V$, with a non-zero vector in that subspace.
\end{note}

The proofs of Lemmata \ref{lem.multiform1} and \ref{lem.multiform2} are straightforward.

\begin{lem}\label{lem.multiform1}  Let $U$ be a subspace of $V$.  Then $U^{\perp}=\varphi(U)^{\perp_{\varphi}}$ and 
$$
\langle U,\varphi(U)\rangle^{\perp}=\langle U,\varphi(U)\rangle^{\perp_{\varphi}}=U^{\dperp}.
$$
\end{lem}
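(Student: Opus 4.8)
The plan is to reduce both identities to the single defining relation $\beta_{\varphi}(u,v)=\beta(u,\varphi(v))$, together with the two facts available to us for a flip: $\varphi^{2}=\mathrm{id}$ on $V$ (so $\varphi$ is a bijection of $V$ that is its own inverse) and, by Lemma \ref{lem.newform}, that $\beta_{\varphi}$ is a non-degenerate \emph{reflexive} sesquilinear form. Reflexivity is the point that matters: it is what licenses speaking of ``the'' $\beta_{\varphi}$-orthogonal complement of a subspace without distinguishing left from right, even though $\beta_{\varphi}$ is in general only $\sigma\tau$-sesquilinear rather than symmetric or Hermitian.

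First I would prove $U^{\perp}=\varphi(U)^{\perp_{\varphi}}$ by a direct membership argument. Since $\varphi$ is a bijection of $V$, the subspace $\varphi(U)$ is spanned by the vectors $\varphi(u)$ with $u\in U$, so $x\in\varphi(U)^{\perp_{\varphi}}$ precisely when $\beta_{\varphi}(x,\varphi(u))=0$ for all $u\in U$. But $\beta_{\varphi}(x,\varphi(u))=\beta(x,\varphi^{2}(u))=\beta(x,u)$ because $\varphi^{2}=\mathrm{id}$, so this condition is exactly $x\in U^{\perp}$. (Reflexivity of $\beta_{\varphi}$ is used here only to know that the one-sided complement $\varphi(U)^{\perp_{\varphi}}$ is unambiguous.)

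Next I would get the second line of the lemma by applying this first identity twice with different inputs. Feeding $\varphi(U)$ into it and using $\varphi^{2}=\mathrm{id}$ gives $\varphi(U)^{\perp}=U^{\perp_{\varphi}}$. Since the $\beta$-orthogonal complement of a sum of subspaces is the intersection of their complements, $\langle U,\varphi(U)\rangle^{\perp}=U^{\perp}\cap\varphi(U)^{\perp}=U^{\perp}\cap U^{\perp_{\varphi}}=U^{\dperp}$, the middle step using the line just obtained. Symmetrically, using the reflexive form $\beta_{\varphi}$, $\langle U,\varphi(U)\rangle^{\perp_{\varphi}}=U^{\perp_{\varphi}}\cap\varphi(U)^{\perp_{\varphi}}=U^{\perp_{\varphi}}\cap U^{\perp}=U^{\dperp}$, now using the very first identity $U^{\perp}=\varphi(U)^{\perp_{\varphi}}$. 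This yields all three asserted equalities.

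I do not expect any genuine obstacle: the whole argument is a short unwinding of definitions. The only thing needing a little care is the bookkeeping of which of the two companion identities, $U^{\perp}=\varphi(U)^{\perp_{\varphi}}$ or $\varphi(U)^{\perp}=U^{\perp_{\varphi}}$, is invoked at each step, and the observation that it is reflexivity of $\beta_{\varphi}$, not any symmetry property, that makes $\perp_{\varphi}$ behave as a two-sided notion here.
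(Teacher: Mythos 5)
Your argument is correct: the identity $\beta_{\varphi}(x,\varphi(u))=\beta(x,u)$ from $\varphi^{2}=\mathrm{id}$, together with reflexivity of $\beta_{\varphi}$ and the fact that the complement of a span is the intersection of complements, gives all three equalities. The paper omits the proof as "straightforward," and your unwinding of the definitions is exactly the intended argument, so there is nothing further to add.
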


\begin{lem}\label{lem.multiform2}  Let $U$, $U'\in\Gamma(n,q)$ with $U\subset U'$.  Then
$$
\langle U,\varphi(U)\rangle^{\perp}\cap U'=\varphi(U)^{\perp}\cap U'=U^{\perp_{\varphi}}\cap U'=U^{\dperp}\cap U'.
$$
\end{lem}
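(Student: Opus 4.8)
The plan is to reduce the chain of four equalities entirely to Lemma \ref{lem.multiform1}, the standing hypothesis that $\varphi^{2}=\mathrm{id}$ on $V$, and the fact that $U'$ is totally $\beta$-isotropic. In fact two of the three needed equalities will turn out to hold already as identities of subspaces of $V$, before intersecting with $U'$, and only the middle one actually uses $U'$.

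First I would record that Lemma \ref{lem.multiform1} gives $\langle U,\varphi(U)\rangle^{\perp}=U^{\dperp}$ outright, which already identifies the first set in the displayed chain with the last. Next, applying Lemma \ref{lem.multiform1} to the subspace $\varphi(U)$ in place of $U$ and using $\varphi(\varphi(U))=\varphi^{2}(U)=U$, I obtain $\varphi(U)^{\perp}=\varphi(\varphi(U))^{\perp_{\varphi}}=U^{\perp_{\varphi}}$. (Alternatively this can be read straight off the definition $\beta_{\varphi}(u,v)=\beta(u,\varphi(v))$ together with the reflexivity of $\beta_{\varphi}$ from Lemma \ref{lem.newform}.) Hence the second and third sets in the chain are equal, again before intersecting with $U'$. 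So the only equality that remains is $U^{\perp_{\varphi}}\cap U'=U^{\dperp}\cap U'$.

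For this last equality, the inclusion $\supseteq$ is immediate since $U^{\dperp}=U^{\perp}\cap U^{\perp_{\varphi}}\subseteq U^{\perp_{\varphi}}$. For $\subseteq$, the key observation is that $U'$ is $\beta$-isotropic and $U\subseteq U'$, so $\beta$ vanishes on $U'\times U$; in other words $U'\subseteq U^{\perp}$. Therefore any vector lying in $U^{\perp_{\varphi}}\cap U'$ also lies in $U^{\perp}\cap U^{\perp_{\varphi}}=U^{\dperp}$, which gives $U^{\perp_{\varphi}}\cap U'\subseteq U^{\dperp}\cap U'$. Chaining the three equalities then yields the claim.

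There is no real obstacle here; the only points worth checking are that Lemma \ref{lem.multiform1} is legitimately applicable to $\varphi(U)$ (it is, since $\varphi(U)$ is a subspace of $V$ and $\varphi$ is a bijective (semi)linear map with $\varphi^{2}=\mathrm{id}$), and that the hypothesis $U\subseteq U'$ is used only through the total isotropy of $U'$ — which is precisely what makes the intersection with $U'$ genuinely needed for the middle equality while being harmless for the outer two.
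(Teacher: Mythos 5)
Your proof is correct: the paper itself offers no argument for this lemma (it is declared straightforward), and your reduction — the outer equality and $\varphi(U)^{\perp}=U^{\perp_{\varphi}}$ coming from Lemma \ref{lem.multiform1} (the latter applied to $\varphi(U)$ with $\varphi^{2}=\mathrm{id}$), plus $U'\subseteq U^{\perp}$ from the total $\beta$-isotropy of $U'$ to get $U^{\perp_{\varphi}}\cap U'=U^{\dperp}\cap U'$ — is exactly the intended direct verification. Nothing is missing.
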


\begin{lem}\label{lem.complementingamma}  Let $U$, $U'\in\Gamma(n,q)$ with $U\subset U'$.  Then
$$
W=\langle U,\varphi(U)\rangle^{\perp}\cap U'\in \Gamma(n,q).
$$
\end{lem}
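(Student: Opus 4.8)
The plan is to verify the two defining conditions of membership in $\Gamma(n,q)$ for the subspace $W=\langle U,\varphi(U)\rangle^{\perp}\cap U'$: that $W$ is $\beta$-isotropic, and that $W$ is $\beta_{\varphi}$ non-degenerate, equivalently (by the lemma just above) that $W^{\perp}\cap\varphi(W)=\{0\}$. The $\beta$-isotropy is immediate: $W\subseteq U'$ and $U'\in\Gamma(n,q)$ is $\beta$-isotropic, so $W$ inherits this. The real content is the non-degeneracy of $W$ with respect to $\beta_{\varphi}$, and here is where I would lean on Lemma \ref{lem.multiform2}, which identifies $W=\langle U,\varphi(U)\rangle^{\perp}\cap U'$ with $U^{\perp_{\varphi}}\cap U'$ (and also with $U^{\dperp}\cap U'$). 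So the first step is to rewrite $W$ in whichever of these guises is most convenient.

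Next I would exploit the fact that, inside the $\beta_{\varphi}$-nondegenerate space $U'$, the subspace $U$ is itself $\beta_{\varphi}$-nondegenerate (since $U\in\Gamma(n,q)$). A standard fact about nondegenerate sesquilinear forms restricted to a nondegenerate subspace: if $U\subseteq U'$ with both $U$ and $U'$ nondegenerate for a reflexive form, then $U'=U\perp_{\varphi}(U^{\perp_{\varphi}}\cap U')$ is an orthogonal direct sum, and the complement $U^{\perp_{\varphi}}\cap U'=W$ is again nondegenerate with respect to the restricted form. This is exactly the orthogonal-complement splitting theorem for nondegenerate spaces, and it is the heart of the argument. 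Concretely: any $w\in W$ lying in the $\beta_{\varphi}$-radical of $W$ would be $\beta_{\varphi}$-orthogonal to all of $W$ and, being in $W=U^{\perp_{\varphi}}\cap U'$, also $\beta_{\varphi}$-orthogonal to all of $U$; since $U'=U+W$, such a $w$ is in the $\beta_{\varphi}$-radical of $U'$, hence zero by nondegeneracy of $U'$. That $U'=U+W$ is precisely the splitting statement, which follows from the nondegeneracy of $U$ (a dimension count via $\dim(U^{\perp_{\varphi}}\cap U')=\dim U'-\dim U$ when $U$ is nondegenerate).

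I expect the main obstacle to be purely bookkeeping: making sure the dimension formula $\dim(U^{\perp_{\varphi}}\cap U')=\dim U'-\dim U$ is applied correctly — this needs $U$ to be $\beta_{\varphi}$-nondegenerate and $U\subseteq U'$, both of which hold, but one must be careful that $\beta_{\varphi}$ restricted to $U'$ is still nondegenerate (true, since $U'\in\Gamma(n,q)$) so that the formula $\dim(X\cap U')+\dim(X^{\perp_{\varphi}}\cap U')=\dim U'$ for a subspace $X$ of $U'$ is legitimate in the nondegenerate space $U'$. Once that dimension count is in place, the direct-sum decomposition $U'=U\oplus W$ and hence $U'=U+W$ follows, and the radical argument above finishes the proof. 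An alternative, equally clean route is to note $\langle W,\varphi(W)\rangle\subseteq\langle U',\varphi(U')\rangle$ and chase $W^{\perp}\cap\varphi(W)=\{0\}$ directly using Lemma \ref{lem.multiform1}; but the orthogonal-splitting approach inside $U'$ is the more transparent one, so that is the path I would take.
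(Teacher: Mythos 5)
Your proposal is correct and takes essentially the same route as the paper: $\beta$-isotropy is inherited from $U'$, Lemma \ref{lem.multiform2} identifies $W$ with $U^{\perp_{\varphi}}\cap U'$, and $\beta_{\varphi}$ non-degeneracy of $W$ follows because it is a $\beta_{\varphi}$-orthogonal complement of the non-degenerate $U$ inside the non-degenerate $U'$. You simply spell out the standard splitting and dimension-count details that the paper's proof leaves implicit.
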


\begin{proof}  Since $W\subset U'$ and $U'$ is $\beta$ isotropic, also $W$ is $\beta$ isotropic.  By Lemma \ref{lem.multiform2} $W=U^{\perp_{\varphi}}\cap U'$ and so $W$ is a $\beta_{\varphi}$ orthogonal complement to $U$ in $U'$.  Since both $U$ and $U'$ are $\beta_{\varphi}$ non-degenerate it follows that $W$ is $\beta_{\varphi}$ non-degenerate.  
\end{proof}

\begin{lem}\label{lem.gammapointexist}  Let $U\in\Gamma(n,q)$.  Then $U$ contains a point of $\Gamma(n,q)$.
\end{lem}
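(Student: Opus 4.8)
The statement to prove is Lemma \ref{lem.gammapointexist}: every object $U \in \Gamma(n,q)$ contains a point of $\Gamma(n,q)$, i.e.\ a $1$-dimensional subspace that is $\beta$-isotropic (automatic, since $U$ is $\beta$-isotropic) and $\beta_\varphi$-nondegenerate. Since a $1$-space $\langle v\rangle$ is $\beta_\varphi$-nondegenerate exactly when $\beta_\varphi(v,v)\neq 0$, equivalently $Q_\varphi(v)\neq 0$, the task reduces to: inside the $\beta$-isotropic subspace $U$, on which $\beta_\varphi$ restricts to a nondegenerate form (by definition of $\Gamma(n,q)$), there exists an anisotropic vector for $\beta_\varphi$. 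The plan is to argue this purely form-theoretically using the structure of $\beta_\varphi$ established in Lemmas \ref{lem.sigmaform} and \ref{lem.linflipisoform}.

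First I would dispose of the degenerate dimension: if $\dim U = 1$ then $U$ itself is a point of $\Gamma(n,q)$ and there is nothing to prove, so assume $\dim U \geq 1$ and in fact it suffices to handle $\dim U \geq 2$ (or even just to produce one anisotropic vector in any case). Now restrict $\beta_\varphi$ to $U$; call the restriction $b$. By the quoted lemmas, $\beta_\varphi$ — and hence $b$ — is either a nondegenerate symmetric bilinear form (if $\varphi$ is $\sigma$-semilinear), a nondegenerate $\sigma$-hermitian form, or a nondegenerate $\sigma$-antihermitian form (if $\varphi$ is linear). A nondegenerate hermitian or antihermitian form over a finite field always has an anisotropic vector (an antihermitian form becomes hermitian after scaling by a trace-zero element, and a nonzero hermitian form is never totally isotropic on a nonzero space since its diagonal values generate $\mathbb{F}_q$). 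Likewise a nondegenerate symmetric bilinear form over a field of characteristic $\neq 2$ cannot be totally isotropic on a nonzero space: if it were, then $b(v,v)=0$ for all $v\in U$ would force, by polarization $b(u,w) = \tfrac12\big(b(u+w,u+w)-b(u,u)-b(w,w)\big)$, that $b\equiv 0$ on $U$, contradicting nondegeneracy (here we use that $q$ is odd). In every case we obtain $v \in U$ with $b(v,v)=\beta_\varphi(v,v)\neq 0$, and then $\langle v\rangle \subseteq U$ is $\beta$-isotropic and $\beta_\varphi$-nondegenerate, hence a point of $\Gamma(n,q)$ contained in $U$.

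The only real content is the case analysis on the three possible natures of $\beta_\varphi$, and the main (minor) obstacle is just being careful with the hermitian/antihermitian cases over a finite field: one wants the clean fact that a nondegenerate $\sigma$-hermitian form on a nonzero $\mathbb{F}$-space always represents a nonzero value of $\mathbb{F}_q$. I would state this as a one-line observation (it follows because the values $\beta_\varphi(v,v)$ lie in $\mathbb{F}_q$ and, were they all zero, hermitian polarization would again give $\beta_\varphi \equiv 0$), possibly citing Taylor~\cite{Taylor1992} for the standard classification of hermitian forms over finite fields. With that in place the lemma follows in a few lines, uniformly across all flip types, using only the char~$\neq 2$ hypothesis and the nondegeneracy of $\beta_\varphi$ on $U$.
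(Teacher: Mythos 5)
Your proposal is correct and follows essentially the same route as the paper: the paper's proof also reduces to finding a $\beta_{\varphi}$-anisotropic vector in $U$, taking $u,v\in U$ with $\beta_{\varphi}(u,v)\neq 0$ and, when $Q_{\varphi}(u)=Q_{\varphi}(v)=0$, producing a point of the form $u+\lambda v$, which is exactly the content of your polarization step carried out explicitly. The only difference is presentational: you package the scalar choice as the standard fact that a nondegenerate symmetric, $\sigma$-hermitian, or $\sigma$-antihermitian form in odd characteristic is not totally isotropic, whereas the paper leaves that choice of $\lambda$ as a ``straightforward check.''
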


\begin{proof}  If $\dim U=1$ then $U$ is a point.  Suppose $\dim U>1$.  Since $U\in\Gamma(n,q)$ it is $\beta_{\varphi}$ non-degenerate and so there exists $u$, $v\in U$ so that $\beta_{\varphi}(u,v)\neq 0$.  If either $Q_{\varphi}(u)\neq 0$ or $Q_{\varphi}(v)\neq 0$ then $u$ or $v$ is a point respectively.  Otherwise it is straightforward to check that there exists $\lambda\in\mathbb{F}$ so that $u+\lambda v$ is a point of $\Gamma(n,q)$.
\end{proof}

\subsection{Further Properties of $\beta$ and $\beta_{\varphi}$}

In this section we have collected some results concerning the relationship between $\beta$ and $\beta_{\varphi}$.  These results hold for both linear and $\sigma$-semilinear flips and will be used in showing that $\Gamma(n,q)$ is a geometry.

Recall that $\varphi$ denotes both a flip and a semilinear transformation of $V$ that induces the flip and satisfies the appropriate conclusion of Lemma \ref{lem.induceflip}.

\begin{lem}\label{lem.invradical}  Let $U$ be a $\varphi$-invariant subspace of $V$.  Then
$$
\mathrm{Rad}_{\beta}(U)=\mathrm{Rad}_{\beta_{\varphi}}(U)=\varphi(\mathrm{Rad}_{\beta}(U)).
$$
\end{lem}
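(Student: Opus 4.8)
The plan is to unwind the definition $\beta_\varphi(u,v)=\beta(u,\varphi(v))$ together with the hypothesis that $U$ is $\varphi$-invariant and $\varphi^2=\mathrm{id}$, and to show the three sets coincide by a chain of equalities. First I would record the elementary but crucial observation that $\varphi(U)=U$ (since $\varphi$ has order $2$, $\varphi$-invariance means $\varphi(U)\subseteq U$ forces $\varphi(U)=U$), so in particular $u\in U$ if and only if $\varphi(u)\in U$. Next I would compute $U^{\perp_\varphi}\cap U$. For $u\in U$, we have $u\in\mathrm{Rad}_{\beta_\varphi}(U)$ iff $\beta(u,\varphi(w))=0$ for all $w\in U$; but as $w$ ranges over $U$ so does $\varphi(w)$ (again by $\varphi$-invariance), hence this is equivalent to $\beta(u,w')=0$ for all $w'\in U$, i.e. $u\in\mathrm{Rad}_\beta(U)$. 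This gives $\mathrm{Rad}_\beta(U)=\mathrm{Rad}_{\beta_\varphi}(U)$.

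For the third equality I would argue that $\varphi$ maps $\mathrm{Rad}_\beta(U)$ onto itself. If $u\in\mathrm{Rad}_\beta(U)$, then for all $w\in U$, $\beta(\varphi(u),w)=\tau(a)\,\tau^{-1}\!\big(\beta(u,\varphi^{-1}(w))\big)$ up to the similitude factor — more cleanly, using that $\varphi$ is a similitude one has $\beta(\varphi(u),\varphi(w'))=a\tau(\beta(u,w'))=0$ for all $w'\in U$, and since $\varphi(U)=U$, every $w\in U$ is of the form $\varphi(w')$ with $w'\in U$; thus $\varphi(u)\in\mathrm{Rad}_\beta(U)$. Applying the same to $\varphi(u)$ and using $\varphi^2=\mathrm{id}$ shows $\varphi$ restricts to an involution, hence a bijection, of $\mathrm{Rad}_\beta(U)$, so $\varphi(\mathrm{Rad}_\beta(U))=\mathrm{Rad}_\beta(U)$. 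Combining the two displays yields the asserted triple equality.

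I do not expect a genuine obstacle here; the only thing to be careful about is bookkeeping with the semilinearity/similitude factor $a$ and the field automorphism $\tau$, ensuring that the conclusion "$\beta(\cdot,\cdot)=0$" is unaffected by multiplying by the nonzero scalar $a$ or applying the automorphism $\tau$ (which kills $0$ and only $0$). Since all statements are about vanishing of the forms, these factors are harmless, and the proof is essentially the observation that $\varphi$-invariance of $U$ lets one freely substitute $w\leftrightarrow\varphi(w)$ in the defining conditions for the radicals.
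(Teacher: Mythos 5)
Your proposal is correct and follows essentially the same route as the paper: both arguments reduce to the observation that, since $\varphi(U)=U$, the conditions $\beta(u,w)=0$ for all $w\in U$ and $\beta_{\varphi}(u,w)=0$ for all $w\in U$ are interchangeable. Your second paragraph, verifying via the similitude property that $\varphi$ maps $\mathrm{Rad}_{\beta}(U)$ onto itself, supplies the third equality, which the paper's proof leaves implicit; this is a harmless (indeed welcome) bit of extra detail rather than a different approach.
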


\begin{proof}  A vector $u$ lies in the $\beta$ radical of $U$ if and only if $\beta(u,v)=0$ for all $v\in U$.  Since $U$ is $\varphi$-invariant, $U=\varphi(U)$ and so this is also equivalent to requiring that $\beta_{\varphi}(u,v)=0$ for all $v\in U$.  Thus a vector lies in the $\beta$ radical of $U$ if and only if it lies in the $\beta_{\varphi}$ radical of $U$.
\end{proof}

\begin{note}  From now on, when referring to the radical of a $\varphi$-invariant subspace we need not specify to which form we are referring.
\end{note}

\begin{lem}\label{lem.maschke}  Let $U$ be a $\varphi$-invariant subspace of $V$, and let $R$ be a $\varphi$-invariant subspace of $U$.  Then $R$ has a $\varphi$-invariant complement in $U$.
\end{lem}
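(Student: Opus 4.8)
The plan is to reprove Maschke's theorem in this slightly non-standard setting, the only twist being that $\varphi$ may be $\sigma$-semilinear rather than $\mathbb{F}$-linear. Recall that, under our running identification, $\varphi$ is a semilinear transformation of $V$ with $\varphi^{2}=\mathrm{id}$; since $\sigma$ is an automorphism of $\mathbb{F}$, such a map preserves dimensions, so $\varphi$-invariance of $U$ and of $R$ in fact gives $\varphi(U)=U$ and $\varphi(R)=R$, and $\varphi$ restricts to a semilinear involution of $U$.

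First I would pick any $\mathbb{F}$-linear complement $W_{0}$ of $R$ in $U$, and let $\pi\colon U\to R$ be the corresponding projection, so $\pi$ is $\mathbb{F}$-linear, $\pi|_{R}=\mathrm{id}_{R}$, and $\ker\pi=W_{0}$. Then I would form the averaged operator
$$
\pi'=\tfrac{1}{2}\bigl(\pi+\varphi\pi\varphi\bigr),
$$
which makes sense because $q$ is odd, so $2$ is invertible in $\mathbb{F}$. The key observation is that, although $\varphi$ is only $\sigma$-semilinear, the composite $\varphi\pi\varphi$ is $\mathbb{F}$-linear: applying $\varphi$ twice contributes $\sigma^{2}=\mathrm{id}$ on scalars, so $\pi'$ is an $\mathbb{F}$-linear map $U\to R$. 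Moreover $\varphi\pi\varphi$ is the identity on $R$ (for $r\in R$ we have $\varphi(r)\in R$, $\pi\varphi(r)=\varphi(r)$, and $\varphi^{2}(r)=r$), hence $\pi'|_{R}=\mathrm{id}_{R}$; being an $\mathbb{F}$-linear retraction onto $R$, $\pi'$ is idempotent with image $R$.

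Next I would set $W=\ker\pi'$; standard linear algebra then gives $U=R\oplus W$, so it only remains to check that $W$ is $\varphi$-invariant, and this is where the averaging pays off. Using $\varphi^{2}=\mathrm{id}$ one computes
$$
\varphi\pi'\varphi=\tfrac{1}{2}\bigl(\varphi\pi\varphi+\varphi^{2}\pi\varphi^{2}\bigr)=\tfrac{1}{2}\bigl(\varphi\pi\varphi+\pi\bigr)=\pi',
$$
so $\varphi\pi'=\pi'\varphi$ on $U$. Therefore, if $w\in W=\ker\pi'$, then $\pi'(\varphi(w))=\varphi(\pi'(w))=0$, so $\varphi(w)\in W$, which would complete the argument.

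There is no serious obstacle here; the only points requiring care — and the reason the statement is worth isolating — are the verification that the averaged operator $\varphi\pi\varphi$ is genuinely $\mathbb{F}$-linear despite $\varphi$ being semilinear, and the use of odd characteristic to divide by $2$.
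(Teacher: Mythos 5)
Your argument is correct. Where the paper disposes of this lemma in one line by citing Maschke's Theorem (Theorem 1.9 of \cite{Isaacs2006}), you have reproved it from scratch by the standard averaging trick for the order-two group $\langle\varphi\rangle$: average a linear projection $\pi$ onto $R$ to get $\pi'=\tfrac{1}{2}(\pi+\varphi\pi\varphi)$, check it is a retraction onto $R$ commuting with $\varphi$, and take $W=\ker\pi'$. All the steps go through: $\varphi\pi\varphi$ is genuinely $\mathbb{F}$-linear because $\sigma^{2}=\mathrm{id}$, the scalar $\tfrac{1}{2}$ is available since $q$ is odd (and passes through $\varphi$ because it lies in the prime field, hence is fixed by $\sigma$), $\varphi$-invariance plus bijectivity gives $\varphi(U)=U$ and $\varphi(R)=R$ so that $\varphi\pi\varphi$ lands in $R$, and $\varphi\pi'\varphi=\pi'$ yields $\varphi(\ker\pi')=\ker\pi'$. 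The mathematical content is the same as the cited theorem, but your explicit treatment has a genuine payoff: when $\varphi$ is $\sigma$-semilinear, $U$ is not an $\mathbb{F}[\langle\varphi\rangle]$-module in the literal sense of the cited statement, so invoking Maschke requires either a restriction of scalars to $\mathbb{F}_{q}$ or exactly the verification you carried out that the averaged operator is still $\mathbb{F}$-linear. Your version makes that point airtight at the cost of half a page, while the paper's citation is shorter but leaves the semilinear wrinkle to the reader.
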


\begin{proof}  This is a special case of Maschke's Theorem, see for example Theorem 1.9 of \cite{Isaacs2006}.
\end{proof}

Combining Lemmata \ref{lem.invradical} and \ref{lem.maschke} we immediately find the following.

\begin{cor}  Let $U$ be a $\varphi$-invariant subspace of $V$ and let $R$ be its radical.  Then $R$ has a $\varphi$-invariant complement in $U$.
\end{cor}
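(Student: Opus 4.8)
The plan is simply to combine the two immediately preceding lemmas, which is exactly what the transitional sentence (``Combining Lemmata~\ref{lem.invradical} and~\ref{lem.maschke} we immediately find the following'') anticipates. First I would invoke Lemma~\ref{lem.invradical}: since $U$ is $\varphi$-invariant, its radical $R=\mathrm{Rad}_{\beta}(U)$ satisfies $R=\varphi(\mathrm{Rad}_{\beta}(U))=\varphi(R)$, so $R$ is itself a $\varphi$-invariant subspace of the $\varphi$-invariant subspace $U$.

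With that observation in place, the hypotheses of Lemma~\ref{lem.maschke} (Maschke's theorem applied to the action of the order-$2$ group $\langle\varphi\rangle$ on $U$, legitimate since $q$ is odd) are met verbatim with the pair $(U,R)$. The lemma then yields a $\varphi$-invariant complement $R'$ to $R$ in $U$, that is, $U=R\oplus R'$ with $\varphi(R')=R'$, which is precisely the assertion of the corollary.

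Since both ingredients are already established, there is essentially no obstacle here; the only point worth making explicit is that Lemma~\ref{lem.invradical} does more than identify the two radicals of $U$ — it also records that the common radical is $\varphi$-invariant — and this is exactly what promotes ``$R$ is the radical of $U$'' to ``$R$ is a $\varphi$-invariant subspace of $U$'', so that Lemma~\ref{lem.maschke} is applicable. No further computation is needed.
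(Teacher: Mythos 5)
Your argument is correct and is exactly the one the paper intends: Lemma \ref{lem.invradical} shows the radical $R$ is $\varphi$-invariant, and Lemma \ref{lem.maschke} then supplies the $\varphi$-invariant complement in $U$. Nothing is missing.
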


\begin{lem}\label{lem.residuenondeg}  Let $W\in\Gamma(n,q)$ with $\dim W=k$.  Then $W\cap\varphi(W)=\{0\}$.  Hence $W'=\langle W,\varphi(W)\rangle$ is $2k$-dimensional, $\varphi$-invariant, and non-degenerate.
\end{lem}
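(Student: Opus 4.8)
The plan is to first establish $W \cap \varphi(W) = \{0\}$, and then deduce the remaining assertions as easy consequences. For the intersection claim, note that $W \cap \varphi(W)$ is a $\varphi$-invariant subspace (applying $\varphi$ swaps the two factors, so the intersection is fixed setwise), and it is contained in $W$, hence it is $\beta$-isotropic since $W \in \Gamma(n,q)$ is $\beta$-isotropic. First I would observe that for any vector $u$ in this intersection we have both $u \in W$ and $u \in \varphi(W)$, so $\varphi(u) \in W$ as well (since $\varphi^2 = \mathrm{id}$ on $V$ and $u \in \varphi(W)$ means $\varphi(u) \in \varphi^2(W) = W$). Then for every $w \in W$ we get $\beta_\varphi(u,w) = \beta(u,\varphi(w))$; but $\varphi(w)$ need not lie in $W$, so instead I would run the argument directly with the biorthogonality language: $u \in W \cap \varphi(W)$ forces $u \in W$ and $\varphi(u) \in W$, and since $W$ is $\beta$-isotropic, $\beta(u,\varphi(u)) = 0$, i.e. $\beta_\varphi(u,u) = 0$, so $Q_\varphi(u) = 0$. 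More strongly, for arbitrary $w \in W$, writing $u = \varphi(u')$ with $u' \in W$, we have $\beta_\varphi(u', w) = \beta(u',\varphi(w))$ and also $\beta(u,w) = \beta(\varphi(u'),w) = 0$ because $u \in W$ and $W$ is $\beta$-isotropic; applying $\varphi$-equivariance of $\beta$ (Lemma \ref{lem.rightaut} tells us $\varphi$ is an isometry or anti-isometry) to $\beta(\varphi(u'),w) = 0$ gives $\beta(u',\varphi(w)) = 0$ up to the scalar $a = \pm 1$, hence $\beta_\varphi(u',w) = 0$ for all $w \in W$. Thus $u' \in \mathrm{Rad}_{\beta_\varphi}(W) = \{0\}$ since $W$ is $\beta_\varphi$ non-degenerate, whence $u = \varphi(u') = 0$.

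Once $W \cap \varphi(W) = \{0\}$ is in hand, the dimension count is immediate: $\dim W' = \dim\langle W,\varphi(W)\rangle = \dim W + \dim \varphi(W) - \dim(W \cap \varphi(W)) = k + k - 0 = 2k$, using that $\varphi$ is bijective so $\dim\varphi(W) = \dim W = k$. The subspace $W' = \langle W, \varphi(W)\rangle$ is $\varphi$-invariant because $\varphi(W') = \langle \varphi(W), \varphi^2(W)\rangle = \langle \varphi(W), W\rangle = W'$. For non-degeneracy of $W'$, I would apply Lemma \ref{lem.multiform1}, which gives $\langle W,\varphi(W)\rangle^\perp = W^{\dperp}$, combined with the fact that $W \in \Gamma(n,q)$ means $W^\perp \cap \varphi(W) = \{0\}$; concretely, $\mathrm{Rad}_\beta(W') = W' \cap (W')^\perp = W' \cap W^{\dperp}$, and I would show this is $\{0\}$ by noting any vector in it is $\beta$-orthogonal to both $W$ and $\varphi(W)$ while lying in their span, then using the $\beta_\varphi$ non-degeneracy of $W$ together with Lemma \ref{lem.multiform2} to kill it. Since $W'$ is $\varphi$-invariant, Lemma \ref{lem.invradical} lets me conclude that its $\beta$-radical and $\beta_\varphi$-radical coincide, so non-degeneracy with respect to one form gives it with respect to the other.

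The main obstacle I anticipate is bookkeeping the scalar $a \in \{\pm 1\}$ from the (anti-)isometry property of $\varphi$ when transferring $\beta$-orthogonality statements between $W$ and $\varphi(W)$ — this is harmless because $a \neq 0$, but it must be handled cleanly rather than assuming $\varphi$ is an isometry. A secondary subtlety is making sure the step ``$u \in W \cap \varphi(W) \Rightarrow u' := \varphi(u) \in W$ with $\beta_\varphi(u', w) = 0$ for all $w \in W$'' is argued in the right order, since $\beta_\varphi$ is not symmetric in general unless we are in the situation of Lemma \ref{lem.sigmaform}; I would therefore check both $\beta_\varphi(u', w) = 0$ and $\beta_\varphi(w, u') = 0$, or simply invoke reflexivity of $\beta_\varphi$ from Lemma \ref{lem.newform} to pass between left and right radicals. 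Everything else is routine linear algebra with the two forms.
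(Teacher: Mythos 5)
Your proposal is correct and follows essentially the same route as the paper: both arguments boil down to the observation that $\beta_{\varphi}$ non-degeneracy of $W$ forces $W^{\perp}\cap\varphi(W)=\{0\}$ (hence, since the isotropic $W$ satisfies $W\subseteq W^{\perp}$, also $W\cap\varphi(W)=\{0\}$), and both then kill $\mathrm{Rad}_{\beta}(W')$ by trapping it inside $W\cap\varphi(W)$. The only differences are cosmetic: your intersection argument is a more laborious element-wise version (routing through the isometry/anti-isometry transfer) of the paper's two-line containment, and for the radical you compute directly with Lemmas \ref{lem.multiform1} and \ref{lem.multiform2} where the paper instead invokes the $\varphi$-stability of the radical from Lemma \ref{lem.invradical}.
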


\begin{proof}  Since $W$ is $\beta$ isotropic, $W\subseteq W^{\perp}$.  Since $W$ is $\beta_{\varphi}$ non-degenerate it follows that $$
W^{\perp}\cap \varphi(W)=\{0\}
$$
and so also $W\cap \varphi(W)=\{0\}$.  This shows that $\dim\langle W,\varphi(W)\rangle=2k$.  

That $W'$ is $\varphi$-invariant is clear.  To show that $W'$ is non-degenerate, notice first that by Lemma \ref{lem.invradical}, $\mathrm{Rad}_{\beta}(W')=\varphi(\mathrm{Rad}_{\beta}(W'))$.  Furthermore, $\mathrm{Rad}_{\beta}(W')\subseteq W^{\perp}\cap W'=W$.  But also $\varphi(\mathrm{Rad}_{\beta}(W'))\subseteq \varphi(W^{\perp}\cap W')=\varphi(W\cap W')=\varphi(W)\cap\varphi(W')=\varphi(W)$.  It follows that $\mathrm{Rad}_{\beta}(W')\subseteq W\cap \varphi(W)=\{0\}$ and so $W'$ is non-degenerate.
\end{proof}

\begin{cor}\label{cor.basebiorthopoints}  If $W\in\Gamma(n,q)$ with $\dim W=k$ then there is a basis $\{w_{i}\}_{i=1}^{k}$ for $W$ of biorthogonal points.
\end{cor}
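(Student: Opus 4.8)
\emph{Proof proposal.}

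First I would reduce the statement to a Gram--Schmidt orthogonalization with respect to $\beta_{\varphi}$. Since $W$ is $\beta$-isotropic, any two vectors of $W$ are automatically $\beta$-orthogonal, so a basis $\{w_i\}_{i=1}^k$ of $W$ consisting of biorthogonal points is exactly a $\beta_{\varphi}$-orthogonal basis of $W$ each of whose members $w_i$ is a point of $\Gamma(n,q)$, i.e. satisfies $Q_{\varphi}(w_i)\neq 0$ (equivalently $\langle w_i\rangle$ is $\beta_{\varphi}$ non-degenerate). Because $\beta_{\varphi}$ is reflexive by Lemma~\ref{lem.sigmaform} or Lemma~\ref{lem.linflipisoform}, there is no ambiguity in taking $\beta_{\varphi}$-orthogonal complements, and the type of flip plays no role in what follows. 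I would then argue by induction on $k=\dim W$.

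For the base case $k=1$, the space $W$ is itself $\beta$-isotropic and $\beta_{\varphi}$ non-degenerate, hence a point of $\Gamma(n,q)$, and $\{w_1\}$ works for any $w_1\in W\setminus\{0\}$. For the inductive step, assume $k>1$. By Lemma~\ref{lem.gammapointexist}, $W$ contains a point $w_1$, so $\langle w_1\rangle\in\Gamma(n,q)$ with $\langle w_1\rangle\subset W$. Applying Lemma~\ref{lem.complementingamma} with $U=\langle w_1\rangle$ and $U'=W$, the subspace $W_1:=\langle w_1,\varphi(w_1)\rangle^{\perp}\cap W$ lies in $\Gamma(n,q)$, and by Lemma~\ref{lem.multiform2} it equals $\langle w_1\rangle^{\perp_{\varphi}}\cap W$, a $\beta_{\varphi}$-orthogonal complement of $\langle w_1\rangle$ in $W$. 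Since $w_1$ is a point we have $w_1\notin\langle w_1\rangle^{\perp_{\varphi}}$, so $W=\langle w_1\rangle\oplus W_1$ and $\dim W_1=k-1$. The inductive hypothesis gives a biorthogonal basis of points $w_2,\dots,w_k$ of $W_1$, and then $\{w_i\}_{i=1}^k$ is a basis of $W$ in which every $w_i$ is a point, $\beta(w_i,w_j)=0$ for $i\neq j$ (as $W$ is $\beta$-isotropic), $\beta_{\varphi}(w_1,w_j)=0$ for $j\geq 2$ (as $w_j\in W_1\subseteq\langle w_1\rangle^{\perp_{\varphi}}$), and $\beta_{\varphi}(w_i,w_j)=0$ for $2\leq i<j$ by induction; this is the desired basis.

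I do not expect a genuine obstacle here: the proof is routine Gram--Schmidt. The only two points that are not completely trivial are (a) that at each stage $W$ (respectively the complement) actually contains a point, and (b) that the $\beta_{\varphi}$-orthogonal complement of a point inside a member of $\Gamma(n,q)$ is again a member of $\Gamma(n,q)$ of the right dimension. Both are already established in the preceding lemmas (Lemma~\ref{lem.gammapointexist} for (a); Lemmas~\ref{lem.complementingamma} and~\ref{lem.multiform2} for (b)), so the write-up should be short.
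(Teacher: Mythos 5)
Your proposal is correct and follows essentially the same route as the paper: induction on $\dim W$, extracting a point via Lemma~\ref{lem.gammapointexist}, passing to the complement $\langle w_{1},\varphi(w_{1})\rangle^{\perp}\cap W$ via Lemmas~\ref{lem.complementingamma} and~\ref{lem.multiform2}, and assembling the basis. The only difference is cosmetic (your explicit Gram--Schmidt framing and the dimension count), so nothing further is needed.
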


\begin{proof}  We induct on $k$.  If $k=1$ the result is trivial.  If $k>1$ then by Lemma \ref{lem.gammapointexist} $W$ contains a point $w_{1}$ of $\Gamma(n,q)$.  Let $W'=\langle w_{1}\rangle^{\dperp}\cap W$.  It follows from Lemma \ref{lem.multiform2} that $W'=\langle w_{1}\rangle^{\perp_{\varphi}}\cap W$, and so $W'$ has codimension 1 in $W$.  By Lemma \ref{lem.complementingamma} $W'\in\Gamma(n,q)$ and so by the inductive hypothesis there exists a collection of biorthogonal points $\{w_{2},\ldots,w_{k}\}$ that is a basis for $W'$.  Since $W'\subset \langle w_{1},\varphi(w_{1})\rangle^{\perp}$ it follows that $\{w_{1},\ldots,w_{k}\}$ forms a basis of biorthogonal points for $W$.
\end{proof}

\subsection{$\Gamma(n,q)$ is a geometry}

In this section we prove that $\Gamma(n,q)$ is a geometry.  Throughout this section $\varphi$ denotes both a flip, and a semilinear transformation of $V$ that induces the flip and satisfies the appropriate conclusion of Lemma \ref{lem.induceflip}.  Unless otherwise stated these results hold for both linear and $\sigma$-semilinear flips.

\begin{lem}\label{lem.notscalar}  Let $U$ be a subspace of $V$ with $\dim U>n$.  Then $\varphi$ does not act as a scalar on $U$.
\end{lem}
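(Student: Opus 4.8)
The plan is to assume for contradiction that $\varphi$ acts on $U$ as multiplication by a scalar $\lambda\in\mathbb{F}$, i.e. $\varphi(u)=\lambda u$ for all $u\in U$; since $\varphi$ is bijective and $U\neq\{0\}$ (as $\dim U>n\geq 2$), necessarily $\lambda\neq 0$. I would then distinguish the two cases provided by Lemma \ref{lem.induceflip}, according to whether $\varphi$ is linear or $\sigma$-semilinear.

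In the $\sigma$-semilinear case no hypothesis on $\dim U$ is needed. Fix a nonzero $u\in U$ and any $\eta\in\mathbb{F}\setminus\mathbb{F}_q$. Then $\eta u\in U$, so $\lambda\eta u=\varphi(\eta u)=\sigma(\eta)\varphi(u)=\sigma(\eta)\lambda u$; cancelling the nonzero $\lambda$ gives $(\eta-\sigma(\eta))u=0$, hence $u=0$ since $\eta\neq\sigma(\eta)$, a contradiction. (So in this case $\varphi$ in fact fails to act as a scalar on every nonzero subspace.)

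In the linear case, $\varphi^{2}=\mathrm{id}$ forces $\lambda^{2}=1$, so $\lambda\in\{1,-1\}$ (recall $q$ is odd) and $U$ is contained in the $\lambda$-eigenspace $V_{\lambda}=\{v\in V:\varphi(v)=\lambda v\}$; it thus suffices to prove $\dim V_{\pm 1}\leq n$. Since $\varphi$ is a flip, $\Delta^{\varphi}\neq\emptyset$; choosing a chamber in $\Delta^{\varphi}$ and invoking Theorem \ref{thm.idchambers}(2) yields a basis $\{e_{i},f_{i}\}_{i=1}^{n}$ of $V$ with $\varphi(e_{i})=f_{i}$, whence $\varphi(f_{i})=\varphi^{2}(e_{i})=e_{i}$. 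Then $V=\bigoplus_{i=1}^{n}\langle e_{i},f_{i}\rangle$ is a $\varphi$-invariant decomposition on each summand of which $\varphi$ swaps $e_{i}$ and $f_{i}$; as the characteristic is odd, each summand contributes exactly the line $\langle e_{i}+f_{i}\rangle$ to $V_{+1}$ and the line $\langle e_{i}-f_{i}\rangle$ to $V_{-1}$. Hence $\dim V_{+1}=\dim V_{-1}=n<\dim U$, a contradiction, which completes the argument.

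I expect the only delicate point to be obtaining the $\varphi$-invariant decomposition $V=\bigoplus_{i}\langle e_{i},f_{i}\rangle$ with $\varphi$ acting as the transposition on each block; this is why I route the argument through Theorem \ref{thm.idchambers}(2) (together with the definition of a flip, which guarantees $\Delta^{\varphi}\neq\emptyset$) rather than recomputing the eigenstructure of $\varphi$ by hand. Otherwise everything is elementary: morally the statement records that a flip genuinely interchanges a maximal totally isotropic subspace with an opposite one, so neither eigenspace can exceed half the dimension, while a properly semilinear transformation can never rescale a nonzero vector by an element of $\mathbb{F}$.
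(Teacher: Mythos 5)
Your proof is correct, but it takes a genuinely different route from the paper's. The paper argues uniformly for both kinds of flips: it picks an $n$-dimensional object $M$ of $\Gamma(n,q)$ (available because some chamber lies in $\Delta^{\varphi}$, via Theorem \ref{thm.idchambers}), recalls $M\cap\varphi(M)=\{0\}$ from Lemma \ref{lem.residuenondeg}, and notes that $\dim U>n$ forces $M\cap U\neq\{0\}$; a nonzero $v\in M\cap U$ with $\varphi(v)=\mu v$ then lies in $M\cap\varphi(M)=\{0\}$, a contradiction. You instead split according to Lemma \ref{lem.induceflip}: in the $\sigma$-semilinear case a purely field-theoretic computation with a scalar $\eta\in\mathbb{F}\setminus\mathbb{F}_{q}$ shows $\varphi$ cannot act as an $\mathbb{F}$-scalar on any nonzero subspace (a stronger statement needing no dimension hypothesis), and in the linear case you use Theorem \ref{thm.idchambers}(2) to get a pre-hyperbolic basis with $\varphi(e_{i})=f_{i}$, $\varphi(f_{i})=e_{i}$ and compute $\dim V_{+1}=\dim V_{-1}=n$ exactly, so $U\subseteq V_{\pm 1}$ is impossible. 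Both arguments ultimately rest on the same fact that the flip sends a chamber to an opposite chamber; the paper's intersection-count is shorter and case-free, while yours buys sharper information (the exact eigenspace dimensions of a linear flip, and the observation that a properly semilinear map never acts as an $\mathbb{F}$-scalar at all).
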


\begin{proof}   Let $M$ be an $n$-dimensional $\beta$ isotropic $\beta_{\varphi}$ non-degenerate subspace of $V$.  Then by Lemma \ref{lem.residuenondeg}, $M\cap\varphi(M)=\{0\}$.  If $U$ is a subspace of dimension greater than $n$ and $\varphi$ acts as a scalar on $U$, then $\varphi$ acts as a scalar on $M\cap U\neq\{0\}$.  Thus there is a non-zero vector $v\in M\cap U$ with $\varphi(v)=\mu v$ for some non-zero $\mu\in\mathbb{F}$.  But then $v\in M\cap \varphi(M)=\{0\}$, a contradiction.
\end{proof}

\begin{lem}\label{lem.linflipscalar}  Suppose $\varphi$ is a linear flip, and let $X$ be $2k$-dimensional, $\varphi$ invariant and non-degenerate subspace of $V$.  Then one of the following three holds:
\begin{enumerate}
\item[(i)]  $X$ contains a point of $\Gamma(n,q)$;
\item[(ii)]  $\varphi(x)=x$ for all $x\in X$;
\item[(iii)]  $\varphi(x)=-x$ for all $x\in X$.
\end{enumerate}
\end{lem}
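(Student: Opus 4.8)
The plan is to analyze the linear map $\varphi$ restricted to $X$ using the $\sigma$-sesquilinear (or $\sigma$-antihermitian) form $\beta_\varphi$ provided by Lemma \ref{lem.linflipisoform}, together with the hypothesis that $\varphi^2 = \mathrm{id}$ on $V$. Since $\varphi|_X$ is an involution on the $\mathbb{F}$-vector space $X$ and $q$ is odd, $X$ decomposes as $X = X_+ \oplus X_-$, where $X_\pm$ are the $\pm 1$-eigenspaces of $\varphi$. The goal is to show that if neither eigenspace is zero then $X$ contains a point of $\Gamma(n,q)$, i.e. a nonzero vector $v \in X$ with $\beta(v,v)=0$ (automatic since $X$ is $\beta$-isotropic — wait, $X$ need not be $\beta$-isotropic) and $Q_\varphi(v)=\frac12\beta_\varphi(v,v)\neq 0$. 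Actually a point of $\Gamma(n,q)$ is a $\beta$-isotropic, $\beta_\varphi$-nondegenerate $1$-space; since $X$ is $\varphi$-invariant and non-degenerate of even dimension $2k$ arising in the situation of Lemma \ref{lem.residuenondeg} (so $X = \langle W,\varphi(W)\rangle$ for $W\in\Gamma(n,q)$), I would first record that $X$ does contain $\beta$-isotropic vectors in abundance, and the real content is producing one on which $Q_\varphi$ does not vanish.

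The key steps I would carry out are as follows. First, using Lemma \ref{lem.multiform1}, observe that $\varphi(U)^{\perp} = U^{\perp_\varphi}$, which relates the $\beta$-geometry of the eigenspaces to the $\beta_\varphi$-geometry: in particular, for $v\in X_+$ we have $\varphi(v)=v$, so $\beta_\varphi(v,v) = \beta(v,\varphi(v)) = \beta(v,v)$, and similarly for $v\in X_-$, $\beta_\varphi(v,v) = -\beta(v,v)$. Thus on each eigenspace $Q_\varphi$ is (up to sign) the restriction of the $\beta$-quadratic form. Second, I would argue that if $\varphi$ is not $\pm\mathrm{id}$ on $X$, then both $X_+$ and $X_-$ are nonzero, and I want to find a $\beta$-isotropic vector $v = v_+ + v_-$ (with $v_\pm \in X_\pm$) such that $\beta_\varphi(v,v) \neq 0$. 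Compute $\beta_\varphi(v,v) = \beta(v_+,v_+) - \beta(v_-,v_-) + (\text{cross terms})$; the cross terms $\beta(v_+,\varphi(v_-)) + \beta(v_-,\varphi(v_+)) = -\beta(v_+,v_-)+\beta(v_-,v_+)$, and one must check how these behave (they may or may not vanish depending on whether the eigenspaces are $\beta$-orthogonal, which follows from $X$ being non-degenerate and $\varphi\in\Gamma\mathrm{U}(V)$ — actually $X_+ \perp_\beta X_-$ need not hold; rather the form pairs them in a controlled way). Third, I would use the non-degeneracy of $X$ to ensure the restriction of $\beta$ (or $\beta_\varphi$) to at least one eigenspace, or to the pairing between them, is non-trivial, then invoke the standard fact that a non-degenerate (pseudo-)quadratic space over $\mathbb{F}_{q^2}$ of dimension $\geq 2$ contains an anisotropic vector relative to the relevant form while also containing a $\beta$-isotropic vector; a short pigeonhole / counting argument over the finite field $\mathbb{F}=\mathbb{F}_{q^2}$ produces a vector that is simultaneously $\beta$-isotropic and $\beta_\varphi$-anisotropic whenever both eigenspaces are nonzero.

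I expect the main obstacle to be the bookkeeping in the case analysis for the cross terms: one must carefully track whether $\beta_\varphi$ restricted to $X_+$ or to $X_-$ is itself non-degenerate, and handle the degenerate sub-cases (e.g. $X_+$ totally isotropic for $\beta_\varphi$) by passing to the pairing between $X_+$ and $X_-$. In the hardest sub-case, where $\beta$ vanishes on both $X_+$ and $X_-$ individually, $X_+$ and $X_-$ are each $\beta$-totally-isotropic and are put in perfect duality by $\beta$; then for $v = v_+ + \lambda v_-$ one gets $\beta(v,v)$ depending on $\beta(v_+,v_-)$ alone and $\beta_\varphi(v,v)$ likewise, and a direct choice of $\lambda$ and of $v_\pm$ (picking $v_+, v_-$ so that $\beta(v_+,v_-)=0$ but using a second pair for the $\beta_\varphi$ computation, or exploiting that $\beta$ and $\beta_\varphi$ differ by the sign twist on the two factors) yields the desired point. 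Throughout, the fact that $q$ is odd (so $\frac12$ makes sense and $\pm 1$ are distinct eigenvalues) and that $\mathbb{F}$ is finite (so norm/trace maps are surjective and counting arguments are available) are the essential hypotheses. Conversely, if one of $X_+$, $X_-$ is all of $X$, then $\varphi = \pm\mathrm{id}$ on $X$, giving (ii) or (iii); so the trichotomy is exhaustive.
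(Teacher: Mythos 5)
Your strategy is correct, but it is genuinely different from the paper's. The paper argues by contraposition: assuming $X$ contains no point of $\Gamma(n,q)$, it writes $X$ as an orthogonal sum of $\beta$-hyperbolic lines $\langle a_{i},b_{i}\rangle$ and shows in five steps that each $a_{i},b_{i}$ is a $\pm 1$-eigenvector of $\varphi$ and that all the signs agree, so $\varphi=\pm\mathrm{id}_{X}$; the points that would otherwise arise are produced by explicit trace/scaling computations. You instead diagonalize $\varphi|_{X}$ directly (legitimate since $\varphi^{2}=\mathrm{id}$ and $q$ is odd), note $\beta_{\varphi}=\pm\beta$ on the eigenspaces $X_{\pm}$, and build a $\beta$-isotropic, $\beta_{\varphi}$-anisotropic vector $v_{+}+v_{-}$ whenever both eigenspaces are nonzero. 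This is arguably more conceptual, and it can be completed cleanly once you observe what you only gesture at: if $\varphi$ is an isometry then $X_{+}\perp_{\beta}X_{-}$ (from $\beta(u,v)=\beta(\varphi u,\varphi v)=-\beta(u,v)$ for $u\in X_{+}$, $v\in X_{-}$), so both eigenspaces are $\beta$-non-degenerate and, over the finite field, one represents $c\in\mathbb{F}_{q}^{\ast}$ on $X_{+}$ and $-c$ on $X_{-}$, giving $\beta(v,v)=0$ and $\beta_{\varphi}(v,v)=2c\neq 0$; if $\varphi$ is an anti-isometry then each eigenspace is totally $\beta$-isotropic and they are in duality, and the right move in your ``hardest sub-case'' is to choose $v_{\pm}$ with $c=\beta(v_{+},v_{-})$ a \emph{nonzero} trace-$0$ element, giving $\beta(v,v)=\mathrm{Tr}_{\sigma}(c)=0$ and $\beta_{\varphi}(v,v)=\sigma(c)-c\neq 0$ --- your suggestion to take $\beta(v_{+},v_{-})=0$ would kill both forms on that vector and should be discarded. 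With those two routine computations filled in (and the parenthetical guess that $X=\langle W,\varphi(W)\rangle$ deleted, since the lemma assumes nothing of the sort and your proof never needs it), your eigenspace argument is a complete and somewhat shorter proof; what the paper's basis-by-basis argument buys instead is that it never has to split into the isometry/anti-isometry dichotomy at the structural level, handling both through the same five steps.
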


\begin{proof}  Suppose that $X$ does not contain any points of $\Gamma(n,q)$.  We will show that either (ii) or (iii) holds. Since $X$ is $\beta$ non-degenerate and even dimensional we can write it as an orthogonal direct sum of $\beta$ hyperbolic lines,
$$
X=\perp_{i=1}^{m}\langle a_{i},b_{i}\rangle
$$
where each $(a_{i},b_{i})$ is a hyperbolic pair.

We proceed now in a series of steps to show that $\varphi$ acts on $X$ as either $\mathrm{id}_{X}$ or $-\mathrm{id}_{X}$.
\begin{enumerate}
\item[]\textbf{Step 1}:  If $u$, $v\in X$ are $\beta$-isotropic then $\beta(u,v)=0$ if and only if $\beta_{\varphi}(u,v)=0$.
\begin{proof}  Notice first that since $X$ contains no points of $\Gamma(n,q)$, $Q_{\varphi}(u)=Q_{\varphi}(v)=0$.

Suppose $\beta(u,v)=0$ but $\beta_{\varphi}(u,v)\neq 0$.  If $\varphi$ is an isometry and $\lambda$ is chosen so that $\mathrm{Tr}_{\sigma}(\sigma(\lambda)\beta_{\varphi}(u,v))\neq 0$ then $u+\lambda v$ is a point of $\Gamma(n,q)$.  If $\varphi$ is an anti-isometry and $\lambda$ is chosen so that $\sigma(\lambda)\beta_{\varphi}(u,v)-\lambda\sigma(\beta_{\varphi}(u,v))\neq 0$ then $u+\lambda v$ is a point of $\Gamma(n,q)$.  In either case, such $\lambda$ exist and so since by hypothesis $X$ contains no points of $\Gamma(n,q)$ we conclude that if $\beta(u,v)=0$ then $\beta_{\varphi}(u,v)=0$.

Conversely if $\beta_{\varphi}(u,v)=0$ but $\beta(u,v)\neq 0$ then $\beta(u,\varphi(v))=0$ while $\beta_{\varphi}(u,\varphi(v))\neq 0$, which we have already shown cannot happen.

Thus $\beta(u,v)=0$ if and only if $\beta_{\varphi}(u,v)=0$ for all $\beta$-isotropic $u$, $v\in X$.
\end{proof}

\item[]\textbf{Step 2}:  For all $i=1,\ldots m$, $\varphi(a_{i})\in\langle a_{i}\rangle$ and $\varphi(b_{i})\in\langle b_{i}\rangle$.
\begin{proof}  We perform the calculation only for $a_{1}$, the others are similar.  Suppose
$$
\varphi(a_{1})=\sum_{i=1}^{m}(x_{i}a_{i}+y_{i}b_{i})
$$
for some scalars $x_{i}$, $y_{i}\in\mathbb{F}$, $i=1,\ldots,m$.

Since $\beta(b_{i},a_{1})=0$ for all $i\neq 1$, also $\beta_{\varphi}(b_{i},a_{1})=0$ for all $i\neq 1$.  But we can calculate that $\beta_{\varphi}(b_{i},a_{1})=\sigma(x_{i})$, and so $x_{i}=0$ if $i\neq 1$.

Similarly for all $i\neq 1$, $\beta(a_{i},a_{1})=0$ and so also $\beta_{\varphi}(a_{i},a_{1})=0$, but $\beta_{\varphi}(a_{i},a_{1})=\sigma(y_{i})$ and so $y_{i}=0$.

Hence $\varphi(a_{1})=x_{1}a_{1}$.
\end{proof}

\item[]\textbf{Step 3}:  For all $i$, $\varphi(a_{i})=a_{i}$ or $\varphi(a_{i})=-a_{i}$.  Similarly $\varphi(b_{i})=b_{i}$ or $\varphi(b_{i})=-b_{i}$.
    \begin{proof}  We prove the result for $a_{i}$, the result for $b_{i}$ is proved similarly.
    Since $\varphi^{2}=\mathrm{id}$ on $V$, $\varphi^{2}(a_{i})=x_{i}^{2}a_{i}=a_{i}$ and so $x_{i}^{2}=1$.  Hence $x_{i}\in\{\pm 1\}$.
    \end{proof}

\item[]\textbf{Step 4}:  $\varphi(a_{i})=-a_{i}$ if and only if $\varphi(b_{i})=-b_{i}$.
\begin{proof}  Assume first that $\varphi$ is an isometry of $(V,\beta)$ and that $\varphi(a_{i})=-a_{i}$.  Then
$$
1=\beta(a_{i},b_{i})=\beta(\varphi(a_{i}),\varphi(b_{i}))=-\beta(a_{i},\varphi(b_{i}))
$$
which forces $\varphi(b_{i})=-b_{i}$.  Similarly if $\varphi(b_{i})=-b_{i}$ then $\varphi(a_{i})=-a_{i}$.

Assume next that $\varphi$ is an anti-isometry of $(V,\beta)$ and that $\varphi(a_{i})=-a_{i}$ while $\varphi(b_{i})=b_{i}$.  Consider the vector $x=a_{i}+\lambda b_{i}$ where $\lambda$ is any non-zero element of trace 0 in $\mathbb{F}$.  An easy calculation shows that $\beta(x,x)=0$ while $\beta_{\varphi}(x,x)=-2\lambda\neq 0$.  Thus $x$ is a point of $\Gamma(n,q)$ which lies in $X$, contradicting the assumption that $X$ contains no points of $\Gamma(n,q)$.
\end{proof}

\item[]\textbf{Step 5}:  If $\varphi(a_{1})=a_{1}$ then $\varphi(a_{i})=a_{i}$ for all $i$ and if $\varphi(a_{1})=-a_{1}$ then $\varphi(a_{i})=-a_{i}$ for all $i$.
    \begin{proof}  Suppose that $\varphi(a_{1})=a_{1}$ but $\varphi(a_{i})=-a_{i}$ for some $i$.  Then also $\varphi(b_{1})=b_{1}$ and $\varphi(b_{i})=-b_{i}$.  Let $x=a_{1}+b_{1}+a_{i}-b_{i}$.  Then two easy calculations show that $x$ is a point of $\Gamma(n,q)$.  Since by assumption $X$ contains no points of $\Gamma(n,q)$ we conclude that if $\varphi(a_{1})=a_{1}$ then $\varphi(a_{i})=a_{i}$ for all $i$.  Similarly if $\varphi(a_{1})=-a_{1}$ then $\varphi(a_{i})=-a_{i}$ for all $i$.
    \end{proof}
\end{enumerate}
Thus for all $x\in X$, either $\varphi(x)=x$ or $\varphi(x)=-x$.
\end{proof}

The situation is even better for a $\sigma$-semilinear flip:

\begin{lem}\label{lem.evendimpoint}  Suppose $\varphi$ is $\sigma$-semilinear and let $U$ be a $2k$-dimensional ($k\geq 1$) subspace of $V$ that is $\beta$ non-degenerate.  Then either $U$ is $\beta_{\varphi}$ totally singular or $U$ contains a point of $\Gamma(n,q)$.
\end{lem}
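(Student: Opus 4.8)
The plan is to prove the following reformulation: \emph{if $U$ contains no point of $\Gamma(n,q)$, then $\beta_{\varphi}$ is totally singular on $U$}. A point of $\Gamma(n,q)$ is a $\beta$-isotropic, $\beta_{\varphi}$ non-degenerate line, so the hypothesis that $U$ contains none says exactly that every $\beta$-isotropic vector $v\in U$ satisfies $\beta_{\varphi}(v,v)=0$, i.e.\ $Q_{\varphi}(v)=0$. Since $\mathbb{F}$ has odd characteristic, $\beta_{\varphi}$ being totally singular on $U$ is equivalent, by polarization of the symmetric bilinear form $\beta_{\varphi}$ of Lemma~\ref{lem.sigmaform}, to $\beta_{\varphi}|_{U}\equiv 0$. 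So it suffices to exhibit a basis of $U$ on which the Gram matrix of $\beta_{\varphi}$ vanishes.

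First I would fix a $\beta$-hyperbolic basis of $U$: since $U$ is $2k$-dimensional and $\beta|_{U}$ is non-degenerate hermitian over the finite field $\mathbb{F}=\mathbb{F}_{q^{2}}$, it has Witt index $k$ (a non-degenerate hermitian form on a $2k$-dimensional space over a finite field has Witt index $k$; cf.\ Lemma~7.5 of \cite{Taylor1992}), so there is a basis $\{e_{i},f_{i}\}_{i=1}^{k}$ of $U$ with $\beta(e_{i},f_{j})=\delta_{ij}$ and $\beta(e_{i},e_{j})=\beta(f_{i},f_{j})=0$ for all $i,j$. Each $e_{i},f_{i}$ is $\beta$-isotropic, hence by the hypothesis each satisfies $\beta_{\varphi}(e_{i},e_{i})=\beta_{\varphi}(f_{i},f_{i})=0$.

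Next I would show that the off-diagonal entries of the Gram matrix vanish. For $i\neq j$ and arbitrary $\lambda\in\mathbb{F}$, each of the vectors $e_{i}+\lambda e_{j}$, $e_{i}+\lambda f_{j}$, $f_{i}+\lambda f_{j}$ is again $\beta$-isotropic, since all $\beta$-values between the two basis vectors involved are $0$ and so the expansion of $\beta(x,x)$ collapses to $\mathrm{Tr}_{\sigma}(0)=0$. By the hypothesis each is then $\beta_{\varphi}$-isotropic; expanding $\beta_{\varphi}(e_{i}+\lambda e_{j},e_{i}+\lambda e_{j})$ and using that $\beta_{\varphi}$ is symmetric and \emph{bilinear} gives $2\lambda\,\beta_{\varphi}(e_{i},e_{j})=0$, so, taking $\lambda=1$ and $q$ odd, $\beta_{\varphi}(e_{i},e_{j})=0$; the other two cases are identical. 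Hence in this basis the Gram matrix of $\beta_{\varphi}|_{U}$ is block-diagonal with $k$ blocks, the $i$th having zero diagonal and off-diagonal entry $c_{i}:=\beta_{\varphi}(e_{i},f_{i})$. If every $c_{i}=0$ then $\beta_{\varphi}|_{U}\equiv 0$ and we are done. If some $c_{i}\neq 0$, choose a non-zero $\lambda\in\mathbb{F}$ of trace $0$ (possible since $\mathrm{Tr}_{\sigma}\colon\mathbb{F}\to\mathbb{F}_{q}$ is surjective with kernel of size $q\geq 3$) and set $v=e_{i}+\lambda f_{i}$; then $\beta(v,v)=\mathrm{Tr}_{\sigma}(\lambda)=0$ while $\beta_{\varphi}(v,v)=2\lambda c_{i}\neq 0$, so $\langle v\rangle$ is a point of $\Gamma(n,q)$ contained in $U$, contradicting the hypothesis. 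Therefore all $c_{i}=0$.

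I do not expect a serious obstacle: once the $\beta$-hyperbolic basis of $U$ is in place the argument is essentially linear algebra. The one step that carries the proof — and the reason the $\sigma$-semilinear case is cleaner than the linear case treated in Lemma~\ref{lem.linflipscalar} — is the systematic use of the fact that $\beta_{\varphi}$ here is an honest symmetric \emph{bilinear} form (Lemma~\ref{lem.sigmaform}), so that expansions like $\beta_{\varphi}(x+\lambda y,x+\lambda y)=\beta_{\varphi}(x,x)+2\lambda\beta_{\varphi}(x,y)+\lambda^{2}\beta_{\varphi}(y,y)$ are available; combined with the odd characteristic of $\mathbb{F}$ (needed both to divide by $2$ and to guarantee a non-zero trace-zero scalar) this is what makes everything go through. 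The only real judgement call is the reduction in the first paragraph to producing a null basis for $\beta_{\varphi}|_{U}$.
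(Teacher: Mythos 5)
Your proof is correct and is essentially the paper's own argument in contrapositive packaging: both decompose $U$ into $\beta$-orthogonal hyperbolic pairs, use the no-point hypothesis to force the diagonal $\beta_{\varphi}$-Gram entries to vanish, and then use test vectors $a_{i}+\lambda b_{i}$ (with $\mathrm{Tr}_{\sigma}(\lambda)=0$) and $a_{i}+a_{j}$, $a_{i}+b_{j}$, $b_{i}+b_{j}$ together with the symmetry and bilinearity of $\beta_{\varphi}$ in odd characteristic to handle the off-diagonal entries. The only difference is presentational: the paper assumes $U$ is not $\beta_{\varphi}$ totally singular and exhibits a point case by case, whereas you assume there is no point and conclude the Gram matrix is identically zero.
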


\begin{proof}  Assume $U$ is not $\beta_{\varphi}$ totally singular.  Since $U$ is $\beta$ non-degenerate we can write $U=\perp_{i=1}^{k}\langle a_{i},b_{i}\rangle$ where each $(a_{i},b_{i})$ is a hyperbolic pair.  If any $a_{i}$ or $b_{j}$ is a point of $\Gamma(n,q)$ then it is the desired point.  So we may assume that for all $i$, $Q_{\varphi}(a_{i})=Q_{\varphi}(b_{i})=0$.

Since $U$ is not $\beta_{\varphi}$ totally singular we must have one of the following.
\begin{enumerate}
\item[(i)]  There is some $i$ so that $\beta_{\varphi}(a_{i},b_{i})\neq 0$.  Then for any non-zero $\lambda$ of trace 0, $a_{i}+\lambda b_{i}$ is a point of $\Gamma(n,q)$.
\item[(ii)]  There are $i$, $j$ so that $\beta_{\varphi}(a_{i},a_{j})\neq 0$.  Then $a_{i}+a_{j}$ is a point of $\Gamma(n,q)$.
\item[(iii)]  There are $i$, $j$ so that $\beta_{\varphi}(a_{i},b_{j})\neq 0$.  Then $a_{i}+b_{j}$ is a point of $\Gamma(n,q)$.
\item[(iv)]  There are $i$, $j$ so that $\beta_{\varphi}(b_{i},b_{j})\neq 0$,  Then $b_{i}+b_{j}$ is a point of $\Gamma(n,q)$.\qedhere
\end{enumerate}
\end{proof}

\begin{cor}\label{cor.sigma}  Suppose $\varphi$ is $\sigma$-semilinear and $U$ is a $2k$-dimensional ($k\geq 1$) subspace of $V$ that is $\varphi$-invariant and non-degenerate.  Then $U$ contains a point of $\Gamma(n,q)$.
\end{cor}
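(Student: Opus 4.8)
The plan is to obtain this as an almost immediate consequence of Lemma~\ref{lem.evendimpoint}, the only work being to eliminate the alternative that $U$ is $\beta_{\varphi}$ totally singular. Since $U$ is $\beta$ non-degenerate of even dimension $2k$ with $k\geq 1$, Lemma~\ref{lem.evendimpoint} applies directly and tells us that either $U$ contains a point of $\Gamma(n,q)$, in which case we are finished, or else $U$ is $\beta_{\varphi}$ totally singular. So the whole argument reduces to showing that a $\varphi$-invariant, $\beta$ non-degenerate subspace cannot be $\beta_{\varphi}$ totally singular.

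To do this I would first use $\varphi$-invariance to compare the two radicals. Because $U=\varphi(U)$, Lemma~\ref{lem.invradical} gives $\mathrm{Rad}_{\beta}(U)=\mathrm{Rad}_{\beta_{\varphi}}(U)$; since $U$ is $\beta$ non-degenerate the left-hand side is $\{0\}$, so $U$ is also $\beta_{\varphi}$ non-degenerate. Next, suppose for contradiction that $U$ is $\beta_{\varphi}$ totally singular, i.e. $Q_{\varphi}(u)=\tfrac12\beta_{\varphi}(u,u)=0$ for every $u\in U$. By Lemma~\ref{lem.sigmaform} the form $\beta_{\varphi}$ is symmetric bilinear, and since $q$ is odd, polarization yields $\beta_{\varphi}(u,v)=Q_{\varphi}(u+v)-Q_{\varphi}(u)-Q_{\varphi}(v)=0$ for all $u,v\in U$; equivalently, $U\subseteq U^{\perp_{\varphi}}$, so $\mathrm{Rad}_{\beta_{\varphi}}(U)=U\cap U^{\perp_{\varphi}}=U\neq\{0\}$. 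This contradicts the $\beta_{\varphi}$ non-degeneracy of $U$ just established. Hence $U$ is not $\beta_{\varphi}$ totally singular, and Lemma~\ref{lem.evendimpoint} forces $U$ to contain a point of $\Gamma(n,q)$.

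Since the proof is just a short assembly of results already available, there is no genuine obstacle. The one point requiring a moment's care is the step from ``$Q_{\varphi}$ vanishes identically on $U$'' to ``$\beta_{\varphi}$ vanishes identically on $U$'': this is exactly where the standing hypothesis that $q$ is odd (so that $2$ is invertible in $\mathbb{F}$) enters, and it is also where the symmetry of $\beta_{\varphi}$ from Lemma~\ref{lem.sigmaform} is used rather than the Hermitian behaviour that occurs in the linear case.
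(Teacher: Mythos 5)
Your proof is correct and matches the paper's intended derivation: the corollary is stated without proof precisely because, as you show, Lemma~\ref{lem.invradical} plus $\beta$ non-degeneracy rules out the totally singular alternative in Lemma~\ref{lem.evendimpoint}. The polarization step via $Q_{\varphi}$ is a harmless extra detour (one can pass directly from $\beta_{\varphi}$ vanishing on $U$ to $U\subseteq\mathrm{Rad}_{\beta_{\varphi}}(U)$), but everything you write is sound.
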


\begin{thm}\label{thm.linextension}  Let $U\in\Gamma(n,q)$.  If $\dim U<n$ then the space $X=\langle U,\varphi(U)\rangle^{\perp}$ contains a point of $\Gamma(n,q)$.
\end{thm}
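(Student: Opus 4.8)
The plan is to reduce the statement to the structural lemmas already proved about $\varphi$-invariant non-degenerate subspaces. Set $k=\dim U$, so $1\le k\le n-1$ since $U\in\Gamma(n,q)$ and $\dim U<n$. Applying Lemma \ref{lem.residuenondeg} to $U$, the subspace $W'=\langle U,\varphi(U)\rangle$ is $2k$-dimensional, $\varphi$-invariant and $\beta$-non-degenerate, so $V=W'\perp X$ where $X=(W')^{\perp}$. Then $X$ has dimension $2(n-k)\ge 2$, is $\beta$-non-degenerate, and is $\varphi$-invariant because a similitude preserves $\beta$-orthogonality. Thus $X$ is exactly the kind of subspace to which Lemma \ref{lem.linflipscalar} (linear case) and Corollary \ref{cor.sigma} ($\sigma$-semilinear case) apply. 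In the $\sigma$-semilinear case we are immediately done: Corollary \ref{cor.sigma} says a $2(n-k)$-dimensional, $\varphi$-invariant, non-degenerate subspace (with $n-k\ge 1$) contains a point of $\Gamma(n,q)$.

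In the linear case, Lemma \ref{lem.linflipscalar} leaves three possibilities for $X$: it contains a point of $\Gamma(n,q)$ (and we are done), or $\varphi$ acts on $X$ as $\mathrm{id}$, or $\varphi$ acts on $X$ as $-\mathrm{id}$. The heart of the argument—and the step I expect to be the main obstacle—is ruling out the last two alternatives, since $X$ may be only $2$-dimensional and there is no a priori reason that a linear involution cannot restrict to $\pm\mathrm{id}$ on such a space. The key is to use the flip property to pin down the eigenspace dimensions of $\varphi$ on all of $V$. Write $V=V_{+}\oplus V_{-}$ for the $(\pm1)$-eigenspace decomposition of $\varphi$ (valid since $q$ is odd and $\varphi^{2}=\mathrm{id}$). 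Because $\varphi$ is a flip it maps some chamber $(C_{i})_{i=1}^{n}$ to an opposite chamber, so by Theorem \ref{thm.idchambers}(1) the maximal isotropic subspace $C_{n}$ is $\beta_{\varphi}$-non-degenerate; thus $C_{n}\in\Gamma(n,q)$ with $\dim C_{n}=n$, and Lemma \ref{lem.residuenondeg} gives $V=C_{n}\oplus\varphi(C_{n})$. A short computation with this decomposition—using that a $\varphi$-fixed vector of $C_{n}$ lies in $\mathrm{Rad}_{\beta_{\varphi}}(C_{n})=\{0\}$ because $C_{n}$ is $\beta$-isotropic—shows $V_{\pm}=\{x\pm\varphi(x):x\in C_{n}\}$, whence $\dim V_{+}=\dim V_{-}=n$.

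On the other hand $\varphi$ interchanges the two summands of $W'=U\oplus\varphi(U)$ (for $a,b\in U$, $\varphi(a+\varphi(b))=b+\varphi(a)$), so $V_{+}\cap W'=\{a+\varphi(a):a\in U\}$ and $V_{-}\cap W'=\{a-\varphi(a):a\in U\}$, each of dimension $k$ (the maps $a\mapsto a\pm\varphi(a)$ are injective on $U$ since $U\cap\varphi(U)=\{0\}$). Now if $\varphi$ acted as $\mathrm{id}$ on $X$, then $X\oplus(V_{+}\cap W')\subseteq V_{+}$, the sum being direct because $W'\cap(W')^{\perp}=\{0\}$; this forces $\dim V_{+}\ge 2(n-k)+k=2n-k>n$, contradicting $\dim V_{+}=n$. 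The case $\varphi=-\mathrm{id}$ on $X$ is identical, using $V_{-}$. Hence the first alternative of Lemma \ref{lem.linflipscalar} holds and $X$ contains a point of $\Gamma(n,q)$. (For $k<n/2$ one could instead dispatch the two bad cases cheaply via Lemma \ref{lem.notscalar}, since then $\dim X=2(n-k)>n$; but the eigenspace count above handles all $k<n$ uniformly, so I would present only that.)
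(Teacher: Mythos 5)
Your proof is correct and follows essentially the same route as the paper: reduce to Corollary \ref{cor.sigma} in the $\sigma$-semilinear case and to Lemma \ref{lem.linflipscalar} in the linear case, then rule out $\varphi|_{X}=\pm\mathrm{id}$ by exhibiting the $(2n-k)$-dimensional subspace spanned by $X$ together with $\{a+\varphi(a):a\in U\}$ (resp.\ $\{a-\varphi(a):a\in U\}$), on which $\varphi$ acts as a scalar, and contradicting a dimension bound that ultimately comes from a maximal object of $\Gamma(n,q)$ supplied by the flip property. The only cosmetic difference is that the paper obtains the contradiction by citing Lemma \ref{lem.notscalar} applied to this same $(2n-k)$-dimensional subspace, whereas you re-derive the bound by computing $\dim V_{\pm}=n$ directly; in particular your closing remark that Lemma \ref{lem.notscalar} only disposes of the case $k<n/2$ overlooks that it applies to the enlarged subspace rather than to $X$ alone.
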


\begin{proof}   Notice first that since $X^{\perp}=\langle U,\varphi(U)\rangle$ is non-degenerate by Lemma \ref{lem.residuenondeg}, and $V$ is non-degenerate by hypothesis, also $X$ is non-degenerate.  

If $\varphi$ is $\sigma$-semilinear the result now follows immediately from Corollary \ref{cor.sigma}.  

Now suppose that $\varphi$ is linear.  We proceed by contradiction.  Suppose $X$ does not contain a point of $\Gamma(n,q)$.  Then by Lemma \ref{lem.linflipscalar}, $\varphi$ acts either as $\mathrm{id}_{X}$ or $-\mathrm{id}_{X}$ on $X$.  Let $k=\dim U$.  Choose a basis $\{a_{1},\ldots,a_{2(n-k)}\}$ for $X$.

Let $\{u_{1},\ldots,u_{k}\}$ be a basis of biorthogonal points for $U$.  Recall that such a basis exists by Corollary \ref{cor.basebiorthopoints}.  Then
$$
\{u_{1},\ldots,u_{k},\varphi(u_{1}),\ldots,\varphi(u_{k})\}
$$
forms a basis for $\langle U,\varphi(U)\rangle$.  We define a new basis for $\langle U,\varphi(U)\rangle$ by:
$$
\{u_{i}+\varphi(u_{i}),u_{i}-\varphi(u_{i})|i=1,\ldots,k\}.
$$
If $\varphi$ acts on $X$ as $\mathrm{id}_{X}$, define a subspace $A$ of $V$ by
$$
A=\langle u_{1}+\varphi(u_{1}),\ldots,u_{k}+\varphi(u_{k}),a_{1},\ldots,a_{2(n-k)}\rangle.
$$
Then $A$ is a $2n-k>n$ dimensional subspace of $V$ on which $\varphi$ acts as multiplication by 1, contradicting Lemma \ref{lem.notscalar}.

Thus by Lemma \ref{lem.linflipscalar}, $\varphi$ must act on $X$ as $-\mathrm{id}_{X}$.  In this case we define a subspace $B$ of $V$ by
$$
B=\langle u_{1}-\varphi(u_{1}),\ldots,u_{k}-\varphi(u_{k}),a_{1},\ldots,a_{2(n-k)}\rangle.
$$
Then $B$ is a $2n-k>n$ dimensional subspace of $V$ on which $\varphi$ acts as multiplication by $-1$, contradicting Lemma \ref{lem.notscalar}.  Hence $X$ must contain a point of $\Gamma(n,q)$.
\end{proof}

\begin{cor}\label{cor.maxobjects}  If $U$ is a maximal object of $\Gamma(n,q)$ then $\dim U=n$.
\end{cor}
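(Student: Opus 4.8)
The statement to prove is Corollary \ref{cor.maxobjects}: if $U$ is a maximal object of $\Gamma(n,q)$ then $\dim U=n$. The plan is to argue by contradiction in the direction that produces a contradiction, namely that $\dim U < n$ is impossible for a \emph{maximal} object, while separately noting that $\dim U > n$ cannot happen because every object of $\Gamma(n,q)$ is $\beta$-isotropic and the maximal $\beta$-isotropic subspaces of the $2n$-dimensional non-degenerate unitary space have dimension exactly $n$. So the only real content is ruling out $\dim U < n$.

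\textbf{Ruling out $\dim U < n$.} Suppose $U\in\Gamma(n,q)$ with $\dim U<n$. I would apply Theorem \ref{thm.linextension} to the space $X=\langle U,\varphi(U)\rangle^{\perp}$: since $\dim U<n$, that theorem guarantees $X$ contains a point $w$ of $\Gamma(n,q)$, i.e. a $1$-dimensional $\beta$-isotropic, $\beta_\varphi$-nondegenerate subspace with $w\perp U$ and $w\perp\varphi(U)$. Now consider $W=U+\langle w\rangle$. I claim $W\in\Gamma(n,q)$ and $W\supsetneq U$, which contradicts maximality of $U$. That $\dim W=\dim U+1$ follows since $w\notin U$ (indeed $w\in X$, and $X\cap\langle U,\varphi(U)\rangle=\{0\}$ because $X^\perp=\langle U,\varphi(U)\rangle$ is nondegenerate by Lemma \ref{lem.residuenondeg}, so $X\cap X^\perp=\{0\}$). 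That $W$ is $\beta$-isotropic follows because $U$ is $\beta$-isotropic, $\langle w\rangle$ is $\beta$-isotropic, and $w\perp U$, so $\beta$ vanishes on all of $W$. The one thing needing a small check is $\beta_\varphi$-nondegeneracy of $W$: since $w\perp\varphi(U)$ we have $\beta_\varphi(w,u)=\beta(w,\varphi(u))=0$ for all $u\in U$, and since $w\perp U$ we have $\beta_\varphi(u',w)=\beta(u',\varphi(w))$; here $\varphi(w)\in\varphi(X)=X$ (as $X$ is $\varphi$-invariant, being the $\perp$ of a $\varphi$-invariant space — use Lemma \ref{lem.multiform1} that $\langle U,\varphi(U)\rangle^{\perp}=\langle U,\varphi(U)\rangle^{\perp_\varphi}=U^{\dperp}$), so $\beta_\varphi(u',w)=0$ as well. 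Hence $W=U\perp_\varphi\langle w\rangle$ is a $\beta_\varphi$-orthogonal direct sum of two $\beta_\varphi$-nondegenerate subspaces, so it is $\beta_\varphi$-nondegenerate. Therefore $W\in\Gamma(n,q)$, contradicting maximality of $U$.

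\textbf{Ruling out $\dim U > n$.} Every object of $\Gamma(n,q)$ is by definition a $\beta$-isotropic subspace of the non-degenerate unitary space $(V,\beta)$ of Witt index $n$, so no object has dimension exceeding $n$. Combined with the previous paragraph, a maximal object must have dimension exactly $n$.

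\textbf{Main obstacle.} The substantive input is entirely Theorem \ref{thm.linextension} (which is already proved in the excerpt); given that, the remaining argument is bookkeeping with the two forms. The only place requiring care is verifying that the extension $W$ is again $\beta_\varphi$-nondegenerate, and in particular checking that $w$ is biorthogonal to $U$ — this is where one must use that the point supplied by Theorem \ref{thm.linextension} lies in $\langle U,\varphi(U)\rangle^{\perp}=U^{\dperp}$ (Lemma \ref{lem.multiform1}), so $w\dperp U$, and then invoke $\varphi$-invariance of $X$ to get $\varphi(w)\perp U$ as well. Everything else is immediate.
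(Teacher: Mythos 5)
Your proof is correct and follows the same route as the paper: by contraposition/contradiction, apply Theorem \ref{thm.linextension} to obtain a point $w$ of $\Gamma(n,q)$ in $\langle U,\varphi(U)\rangle^{\perp}$ and observe that $\langle U,w\rangle\in\Gamma(n,q)$ properly contains $U$. The paper dismisses the verification that $\langle U,w\rangle\in\Gamma(n,q)$ as easy; your careful check of $\beta$-isotropy and $\beta_{\varphi}$-nondegeneracy (via $w\dperp U$ and the $\varphi$-invariance of $\langle U,\varphi(U)\rangle^{\perp}$) is exactly the intended argument.
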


\begin{proof}  We proceed by contraposition.  If $U\in\Gamma(n,q)$ with $\dim U<n$ then by Theorem \ref{thm.linextension} there is a point of $\Gamma(n,q)$, $u\in\langle U,\varphi(U)\rangle^{\perp}$.  It is easy to see that $\langle U,u\rangle\in\Gamma(n,q)$ and so $U$ is not maximal.
\end{proof}

\begin{defn}  Given an object $U\in\Gamma(n,q)$ and a subspace $X$ of $V$, define $r_{U}(X)=X\cap U^{\dperp}$.
\end{defn}

\begin{lem}\label{lem.restrictflip}  Let $U$ be an $m$-object of $\Gamma(n,q)$ with $m<n$ and let $W=U^{\dperp}$.  Then $\varphi|_{W}$ is a flip of the building of totally isotropic subspaces of $(W,\beta|_{W})$.
\end{lem}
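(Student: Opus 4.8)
The plan is to verify the three defining properties of a flip for $\varphi|_{W}$ acting on the building $\Delta'$ of totally isotropic subspaces of $(W,\beta|_{W})$. I begin by pinning down $W$. By Lemma~\ref{lem.multiform1}, $W=U^{\dperp}=\langle U,\varphi(U)\rangle^{\perp}$, and by Lemma~\ref{lem.residuenondeg} the space $X:=\langle U,\varphi(U)\rangle$ is $2m$-dimensional, $\varphi$-invariant and $\beta$-non-degenerate. Since $V$ is non-degenerate, $W=X^{\perp}$ is then a non-degenerate subspace with $V=X\perp W$ and $\dim W=2(n-m)$; moreover $W$ is $\varphi$-invariant because $X$ is and $\varphi$, being a similitude, preserves $\perp$. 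Thus $(W,\beta|_{W})$ is a non-degenerate unitary space of dimension $2(n-m)\geq 2$, so $\Delta'$ is a spherical building of type $C_{n-m}$, and $\varphi|_{W}$ is a $\tau$-semilinear similitude of $(W,\beta|_{W})$ with $(\varphi|_{W})^{2}=\mathrm{id}_{W}$. In particular $\varphi|_{W}$ induces a type-preserving automorphism of $\Delta'$ of order dividing $2$, which already yields property~(i) of a flip and, since the longest word of the Weyl group of type $C_{n-m}$ is central, also property~(ii).

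It remains to exhibit a chamber of $\Delta'$ that $\varphi|_{W}$ sends to an opposite chamber, and I would build one from a suitable pre-hyperbolic basis of $W$. First extend $U$ to a maximal object $N$ of $\Gamma(n,q)$, which has dimension $n$ by Theorem~\ref{thm.linextension} and Corollary~\ref{cor.maxobjects}, and set $M:=\langle U,\varphi(U)\rangle^{\perp}\cap N$. By Lemma~\ref{lem.complementingamma}, $M\in\Gamma(n,q)$; by Lemma~\ref{lem.multiform2}, $M=U^{\perp_{\varphi}}\cap N$ is the $\beta_{\varphi}$-orthogonal complement of $U$ inside $N$, so $\dim M=n-m$; and $M\subseteq U^{\dperp}=W$. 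As $M$ is $\beta$-isotropic of dimension $n-m$ inside the $2(n-m)$-dimensional non-degenerate space $W$, it is a maximal $\beta|_{W}$-isotropic subspace. By Corollary~\ref{cor.basebiorthopoints}, $M$ has a basis $\{w_{1},\dots,w_{n-m}\}$ of biorthogonal points of $\Gamma(n,q)$; each $w_{j}$ is $\beta$-isotropic and satisfies $\beta(w_{j},\varphi(w_{j}))=\beta_{\varphi}(w_{j},w_{j})\neq 0$. Put $f_{j}:=\varphi(w_{j})$; these lie in $W$ because $W$ is $\varphi$-invariant. Using $M\cap\varphi(M)=\{0\}$ from Lemma~\ref{lem.residuenondeg} one sees $M\oplus\varphi(M)=W$, so $\{w_{j},f_{j}\}_{j=1}^{n-m}$ is a basis of $W$; and since $\beta(w_{i},w_{j})=0$, $\beta(f_{i},f_{j})=a\tau(\beta(w_{i},w_{j}))=0$, and $\beta(w_{i},f_{j})=\beta_{\varphi}(w_{i},w_{j})$ vanishes for $i\neq j$ and is non-zero for $i=j$, it is a $\beta|_{W}$ pre-hyperbolic basis with $\varphi(w_{j})=f_{j}$.

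Finally I would apply the opposite-chamber criterion, Theorem~\ref{thm.oppositechambers}, inside $\Delta'$. Let $C=(C_{i})_{i=1}^{n-m}$ be the chamber with $C_{i}=\langle w_{1},\dots,w_{i}\rangle$; then $\varphi|_{W}(C)=D:=(D_{i})_{i=1}^{n-m}$ with $D_{i}=\langle f_{1},\dots,f_{i}\rangle$. For each $i$, a vector $\sum_{j\leq i}c_{j}f_{j}$ of $D_{i}$ lies in $C_{i}^{\perp}$ if and only if $0=\beta\big(\sum_{j\leq i}c_{j}f_{j},w_{k}\big)=c_{k}\,\sigma\big(\beta_{\varphi}(w_{k},w_{k})\big)$ for every $k\leq i$ --- here I use that $\beta$ is $\sigma$-hermitian, so $\beta(\varphi(w_{j}),w_{k})=\sigma(\beta_{\varphi}(w_{k},w_{j}))$, together with the biorthogonality of the $w_{j}$ --- and this forces all $c_{k}=0$. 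Hence $C_{i}^{\perp}\cap D_{i}=\{0\}$ for every $i$, so $C$ and $\varphi|_{W}(C)$ are opposite in $\Delta'$, giving property~(iii) and completing the argument. I expect the only real friction to be the bookkeeping in the middle step: making sure $M$ is simultaneously a $\Gamma(n,q)$-object, a $\beta_{\varphi}$-complement of $U$ in $N$, and a maximal $\beta|_{W}$-isotropic subspace, and that the biorthogonal points of $M$ together with their $\varphi$-images genuinely span $W$. The rank-one case $m=n-1$, where $\Delta'$ is just the set of isotropic points of a hyperbolic plane, should be flagged but needs nothing beyond the above, since the construction still produces two distinct points interchanged by $\varphi|_{W}$.
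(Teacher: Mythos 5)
Your proposal is correct and follows essentially the same route as the paper: extend $U$ to a maximal ($n$-dimensional) object of $\Gamma(n,q)$, intersect with $W=U^{\dperp}$ to get a maximal isotropic subspace of $W$ with a biorthogonal basis of points (Corollary \ref{cor.basebiorthopoints}), and conclude via the opposition criterion of Theorem \ref{thm.oppositechambers} that the resulting chamber is mapped to an opposite by $\varphi|_{W}$. Your write-up merely fills in details the paper leaves implicit (the non-degeneracy and $\varphi$-invariance of $W$, the verification of flip axioms (i) and (ii), and the explicit span/opposition computations), so there is no substantive difference in method.
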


\begin{proof}  Let $M$ be a maximal object of $\Gamma(n,q)$ containing $U$.  Let $M'=W\cap M$.  By Corollary \ref{cor.basebiorthopoints} $M'$ has a basis $\{m_{1},\ldots,m_{n-m}\}$ of biorthogonal points of $\Gamma(n,q)$.  It is easy to see that $\{m_{i},\varphi(m_{i})\}_{i=1}^{n-m}$ forms a basis for $W$.

Finally, we see that the if for $i=1,\ldots,n-m$ if we define $D_{i}=\langle m_{1},\ldots,m_{i}\rangle$ then $D=(D_{i})_{i=1}^{n-m}$ is a chamber of the building of totally isotropic subspaces of $(W,\beta|_{W})$ and by Theorem \ref{thm.oppositechambers} we see that $D$ is sent to its opposite by $\varphi|_{W}$.  Thus $\varphi_{W}$ is a flip of $(W,\beta|_{W})$.
\end{proof}

We have ignored a subtle point:  since there is more than one type of flip, which sort of flip is $\varphi|W$?  Once we finish proving Main Theorem 1 it will be easy to see that if $\varphi$ satisfies (i) or (ii) of Main Theorem 1, then so does $\varphi|_{W}$.  If $\varphi$ satisfies (iii) (resp. (iv)) of Main Theorem 1 and the determinant of the $\beta_{\varphi}$ Gram matrix of $U$ is a square in $\mathbb{F}$ then $\varphi|_{W}$ also satisfies (iii)  (resp. (iv)).  If $\varphi$ satisfies (iii) (resp. (iv)) and the determinant of the $\beta_{\varphi}$ Gram matrix of $U$ is a non-square in $\mathbb{F}$ then $\varphi|_{W}$ satisfies (iv) (resp. (iii)).

\begin{cor}\label{cor.gammaresidue}  If $u$ is a point of $\Gamma(n,q)$ then $r_{u}$ induces an isomorphism of geometries $\mathrm{res}_{\Gamma(n,q)}(u)\to\Gamma(n-1,q)$.  
\end{cor}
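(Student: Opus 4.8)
The plan is to verify directly that the map induced by $r_{u}$, namely $W\mapsto W\cap u^{\dperp}$, is a type-shifting, incidence-preserving bijection from $\mathrm{res}_{\Gamma(n,q)}(u)$ onto $\Gamma(n-1,q)$ whose inverse also preserves incidence. Here $\Gamma(n-1,q)$ is to be read as the flip geometry attached (via Lemma~\ref{lem.restrictflip}) to the flip $\varphi|_{W_{0}}$ of the unitary space $W_{0}:=u^{\dperp}=\langle u,\varphi(u)\rangle^{\perp}$, which is $2(n-1)$-dimensional and non-degenerate by Lemma~\ref{lem.residuenondeg}. Since $W_{0}$ is $\varphi$-invariant, for $x,y\in W_{0}$ one has $\beta_{\varphi|_{W_{0}}}(x,y)=\beta(x,\varphi(y))=\beta_{\varphi}(x,y)$, so the objects of $\Gamma(n-1,q)$ are precisely the $\beta$-isotropic, $\beta_{\varphi}$-non-degenerate subspaces of $V$ contained in $W_{0}$; this identification is what makes the statement meaningful, and I would record it first. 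Recall also that $\mathrm{res}_{\Gamma(n,q)}(u)=\{W\in\Gamma(n,q):\langle u\rangle\subsetneq W\}$, with type set $\{2,\dots,n\}$, while $\Gamma(n-1,q)$ has type set $\{1,\dots,n-1\}$.

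The first step is to check that $r_{u}$ is well-defined into $\Gamma(n-1,q)$ and lowers type by $1$. Given $W\in\mathrm{res}_{\Gamma(n,q)}(u)$, apply Lemma~\ref{lem.multiform2} with the pair $\langle u\rangle\subset W$ to get $r_{u}(W)=W\cap u^{\dperp}=u^{\perp_{\varphi}}\cap W=\langle u,\varphi(u)\rangle^{\perp}\cap W$; then Lemma~\ref{lem.complementingamma} shows $r_{u}(W)\in\Gamma(n,q)$, and since $r_{u}(W)\subseteq u^{\dperp}=W_{0}$ it actually lies in $\Gamma(n-1,q)$. Because $u$ is a point of $\Gamma(n,q)$ we have $\beta_{\varphi}(u,u)\neq 0$, so $\langle u\rangle$ is a $\beta_{\varphi}$-non-degenerate line inside the $\beta_{\varphi}$-non-degenerate space $W$; hence $W=\langle u\rangle\perp_{\varphi}(u^{\perp_{\varphi}}\cap W)$ and $\dim r_{u}(W)=\dim W-1$, so in particular $r_{u}(W)$ is a genuine object. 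Incidence preservation in the forward direction is immediate: $W_{1}\subseteq W_{2}$ yields $r_{u}(W_{1})\subseteq r_{u}(W_{2})$.

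The second step is bijectivity together with incidence preservation of the inverse, which I would handle by exhibiting the inverse explicitly as $Z\mapsto\langle u\rangle+Z$. If $Z\in\Gamma(n-1,q)$, so $Z\subseteq W_{0}$ is $\beta$-isotropic and $\beta_{\varphi}$-non-degenerate, put $W:=\langle u\rangle+Z$. Then $W$ is $\beta$-isotropic, both summands being isotropic and $Z\subseteq W_{0}=\langle u,\varphi(u)\rangle^{\perp}\subseteq u^{\perp}$; and since likewise $Z\subseteq W_{0}\subseteq u^{\perp_{\varphi}}$ while $\langle u\rangle\cap Z=\{0\}$ (as $\beta_{\varphi}(u,u)\neq0$ forces $u\notin u^{\perp_{\varphi}}$), $W$ is the $\beta_{\varphi}$-orthogonal direct sum $\langle u\rangle\perp_{\varphi}Z$ of $\beta_{\varphi}$-non-degenerate subspaces, hence $\beta_{\varphi}$-non-degenerate; thus $W\in\mathrm{res}_{\Gamma(n,q)}(u)$. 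From $Z\subseteq W\cap u^{\dperp}=r_{u}(W)$ and $\dim r_{u}(W)=\dim W-1=\dim Z$ we conclude $r_{u}(W)=Z$, giving surjectivity. For injectivity, the decomposition $W_{i}=\langle u\rangle\oplus r_{u}(W_{i})$ shows $r_{u}(W_{1})=r_{u}(W_{2})\Rightarrow W_{1}=W_{2}$; and this same decomposition shows $r_{u}(W_{1})\subseteq r_{u}(W_{2})\Rightarrow W_{1}\subseteq W_{2}$, so $r_{u}^{-1}$ preserves incidence. Together with the type shift $\dim W\mapsto\dim W-1$ matching the bijection $\{2,\dots,n\}\to\{1,\dots,n-1\}$, this makes $r_{u}$ an isomorphism of geometries.

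I expect the only genuinely delicate point to be the bookkeeping identifying which geometry the symbol $\Gamma(n-1,q)$ denotes: one must be sure that the flip induced on $u^{\dperp}$ by $\varphi$ (Lemma~\ref{lem.restrictflip}) is the one whose flip-geometry equals $\Gamma(n-1,q)$, and that $\beta_{\varphi|_{u^{\dperp}}}$ is literally the restriction $\beta_{\varphi}|_{u^{\dperp}}$ — both consequences of $u^{\dperp}$ being $\varphi$-invariant. Once that is in place, everything else is routine linear algebra with the two forms, using only Lemmata~\ref{lem.multiform1}, \ref{lem.multiform2}, \ref{lem.complementingamma} and \ref{lem.residuenondeg}.
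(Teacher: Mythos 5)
Your proposal is correct and takes essentially the same route as the paper: identify $\Gamma(n-1,q)$ with the flip geometry of $\varphi|_{u^{\dperp}}$ via Lemma \ref{lem.restrictflip} and observe that $r_{u}$ realizes the (type-shifting, incidence-preserving) correspondence between $\mathrm{res}_{\Gamma(n,q)}(u)$ and the $\beta$-isotropic, $\beta_{\varphi}$-non-degenerate subspaces of $\langle u,\varphi(u)\rangle^{\perp}$. The paper simply asserts this correspondence, whereas you spell out the well-definedness, the explicit inverse $Z\mapsto\langle u\rangle+Z$, and the dimension bookkeeping — details the paper leaves to the reader.
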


\begin{proof}  Notice that the objects in the residue of $u$ correspond to $\beta$ isotropic $\beta_{\varphi}$ non-degenerate subspaces of $W=\langle u,\varphi(u)\rangle^{\perp}$, and this correspondence preserves incidence.  By Lemma \ref{lem.restrictflip} $\varphi|_{W}$ is a flip of $(W,\beta|_{W})$ and it is clear from the construction that the geometry induced on $W$ by $\varphi|_{W}$ and the geometry on $W$ induced by $\varphi$ agree.  Hence $\mathrm{res}_{\Gamma(n,q)}(u)\cong\Gamma(n-1,q)$ and the isomorphism is induced by $r_{u}$.
\end{proof}

\begin{thm}\label{thm.biggeometry}  $\Gamma(n,q)$ is a geometry with type and incidence as defined in Definition \ref{def.gamma}.
\end{thm}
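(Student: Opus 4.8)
The plan is to argue by induction on $n$, using the residue isomorphism of Corollary~\ref{cor.gammaresidue} to drop the rank. Since $(\Gamma(n,q),I,\tau,\sim)$ is already a pregeometry, all that is needed is transversality: every maximal flag must have type $I=\{1,\dots,n\}$. First I would record that $\Gamma(n,q)\neq\emptyset$. By condition (iii) in the definition of a flip, $\varphi$ sends some chamber of $\Delta$ to an opposite, so $\Delta^{\varphi}\neq\emptyset$; by Theorem~\ref{thm.idchambers}(1) the $1$-object $C_{1}$ of any chamber $(C_{i})_{i=1}^{n}$ of $\Delta^{\varphi}$ is $\beta_{\varphi}$-non-degenerate, hence $C_{1}$ is a point of $\Gamma(n,q)$. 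Consequently the empty flag is never maximal, and every maximal flag is non-empty.

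For the base case $n=1$ every object has type $1$, so any non-empty flag is a single point and is therefore a chamber; combined with $\Gamma(1,q)\neq\emptyset$ this gives that $\Gamma(1,q)$ is a geometry. (We must start the induction at $n=1$ rather than $n=0$, because Corollary~\ref{cor.gammaresidue} rests on Lemma~\ref{lem.restrictflip}, which requires the deleted object to have dimension strictly less than $n$.) Now assume $n\geq 2$ and that $\Gamma(n-1,q)$ is a geometry, and let $F$ be a maximal flag of $\Gamma(n,q)$. Let $U$ be the member of $F$ of least dimension. If $\dim U\geq 2$, then by Lemma~\ref{lem.gammapointexist} there is a point $u\in\Gamma(n,q)$ with $u\subsetneq U$; since $F$ is a chain, every member of $F$ contains $U$ and hence contains $u$, so $u$ is incident to every member of $F$, and $u\notin F$ because $\dim u=1<\dim U\leq\dim W$ for all $W\in F$. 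Thus $F\cup\{u\}$ is a strictly larger flag, contradicting maximality. Hence $\dim U=1$, i.e. $F$ contains a point $u$.

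Next I would verify that $F':=F\setminus\{u\}$ is a maximal flag of $\mathrm{res}_{\Gamma(n,q)}(u)$: it lies in the residue because every member of $F'$ strictly contains $u$ and is incident to $u$; and any object $X$ extending $F'$ inside $\mathrm{res}_{\Gamma(n,q)}(u)$ would be incident to every member of $F'$ and to $u$, hence $F\cup\{X\}$ would strictly contain $F$, contradicting maximality. By Corollary~\ref{cor.gammaresidue}, $r_{u}$ is an isomorphism $\mathrm{res}_{\Gamma(n,q)}(u)\to\Gamma(n-1,q)$; as in the proof of that corollary, for an object $X$ in the residue Lemma~\ref{lem.multiform2} gives $r_{u}(X)=U^{\perp_{\varphi}}\cap X$ with $U=\langle u\rangle$, which is a $\beta_{\varphi}$-complement to $\langle u\rangle$ in $X$ and so has dimension $\dim X-1$. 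Therefore $r_{u}(F')$ is a maximal flag of $\Gamma(n-1,q)$, its type-$j$ members corresponding to the type-$(j+1)$ members of $F'$. By the inductive hypothesis $\Gamma(n-1,q)$ is a geometry, so $r_{u}(F')$ is a chamber, containing an object of each type $1,\dots,n-1$; translating back, $F'$ contains an object of each type $2,\dots,n$, and hence $F=F'\cup\{u\}$ contains an object of each type $1,\dots,n$. Thus $F$ is a chamber, and $\Gamma(n,q)$ is transversal, hence a geometry.

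The proof is essentially bookkeeping once Theorem~\ref{thm.linextension}, Corollary~\ref{cor.gammaresidue} and Lemma~\ref{lem.gammapointexist} are available, so I do not anticipate a genuine obstacle; the only points deserving care are the dimension/type shift $j\mapsto j-1$ induced by $r_{u}$, the verification that deleting the point preserves maximality when passing to the residue, and the non-emptiness of $\Gamma(n,q)$ needed to exclude the empty flag as a maximal flag.
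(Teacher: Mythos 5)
Your proof is correct and follows essentially the same route as the paper: induction on $n$, using Lemma \ref{lem.gammapointexist} to force a maximal flag to contain a point $u$, and Corollary \ref{cor.gammaresidue} to identify $\mathrm{res}_{\Gamma(n,q)}(u)$ with $\Gamma(n-1,q)$. You simply spell out the bookkeeping (non-emptiness, the type shift under $r_{u}$, maximality in the residue) that the paper leaves implicit.
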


\begin{proof}  We induct on $n$.  If $n=1$ then the result is trivial.  Suppose $n>1$ and let $\mathcal{F}$ be a flag of $\Gamma(n,q)$.  By Lemma \ref{lem.gammapointexist} we can assume that $\mathcal{F}$ contains a point $u$ of $\Gamma(n,q)$.  By Corollary \ref{cor.gammaresidue} the residue of $u$ is isomorphic to $\Gamma(n-1,q)$, which by the inductive hypothesis is a geometry.  Thus $r_{u}(\mathcal{F})$ is a chamber in $\Gamma(n-1,q)$.  It follows easily that $\mathcal{F}$ is a chamber of $\Gamma(n,q)$.
\end{proof}

With Theorem \ref{thm.biggeometry} in hand, we can also prove the following:

\begin{lem}\label{lem.hypbasis}  Let $W$ be an object of $\Gamma(n,q)$ and let $M$ be an $n$-object of $\Gamma(n,q)$ that contains $W$.  Then any $\beta_{\varphi}$ orthogonal basis for $W$ extends to a $\beta_{\varphi}$ orthogonal basis for $M$.  Furthermore if $\{w_{i}\}_{i=1}^{n}$ is any $\beta_{\varphi}$ orthogonal basis for $M$ then $\{w_{i},\varphi(w_{i})\}_{i=1}^{n}$ forms a $\beta_{\varphi}$ orthogonal basis for $V$.
\end{lem}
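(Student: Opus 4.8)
The plan is to prove the two assertions in turn: first construct the extension inside $M$ using a $\beta_{\varphi}$-orthogonal complement of $W$ that is again an object of $\Gamma(n,q)$, and then pass from $M$ to all of $V$ using that $\langle M,\varphi(M)\rangle$ already exhausts $V$ together with a short reflexivity computation.

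\emph{Extension inside $M$.} If $W=M$ there is nothing to do, so assume $\dim W<n$. Put $W'=\langle W,\varphi(W)\rangle^{\perp}\cap M$. By Lemma~\ref{lem.multiform2} this equals $W^{\perp_{\varphi}}\cap M$, hence $W'$ is a $\beta_{\varphi}$-orthogonal complement of $W$ in $M$ and $M=W\oplus W'$ as an orthogonal direct sum for $\beta_{\varphi}$. By Lemma~\ref{lem.complementingamma}, $W'\in\Gamma(n,q)$, so Corollary~\ref{cor.basebiorthopoints} supplies a basis of $W'$ consisting of biorthogonal points; since each such point spans a $\beta_{\varphi}$-nondegenerate line, this basis is $\beta_{\varphi}$-orthogonal with nonzero diagonal entries. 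Now let $\{w_1,\dots,w_k\}$ be any $\beta_{\varphi}$-orthogonal basis of $W$; because $W$ is $\beta_{\varphi}$-nondegenerate and its Gram matrix in this basis is diagonal, each $w_i$ is $\beta_{\varphi}$-anisotropic. As $W$ and $W'$ are mutually $\beta_{\varphi}$-orthogonal, the union of $\{w_1,\dots,w_k\}$ with a basis of biorthogonal points of $W'$ is a $\beta_{\varphi}$-orthogonal basis of $M$, which proves the first claim.

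\emph{Extension to $V$.} Let $\{w_i\}_{i=1}^n$ be a $\beta_{\varphi}$-orthogonal basis of the $n$-object $M$. By Lemma~\ref{lem.residuenondeg}, $\langle M,\varphi(M)\rangle$ is $2n$-dimensional, hence all of $V$; since $\{w_i\}$ spans $M$ and $\{\varphi(w_i)\}$ spans $\varphi(M)$, the $2n$ vectors $\{w_i,\varphi(w_i)\}_{i=1}^n$ span $V$ and therefore form a basis. For $\beta_{\varphi}$-orthogonality: $\beta_{\varphi}(w_i,w_j)=0$ for $i\ne j$ is given; $\beta_{\varphi}(w_i,\varphi(w_j))=\beta(w_i,\varphi^2(w_j))=\beta(w_i,w_j)=0$ for all $i,j$ because $M$ is $\beta$-totally isotropic; and $\beta_{\varphi}(\varphi(w_i),\varphi(w_j))=\beta(\varphi(w_i),\varphi^2(w_j))=\beta(\varphi(w_i),w_j)$, which by reflexivity of $\beta$ and then of $\beta_{\varphi}$ (Lemma~\ref{lem.newform}) vanishes exactly when $\beta_{\varphi}(w_i,w_j)$ does, i.e.\ for $i\ne j$. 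Thus the whole set is pairwise $\beta_{\varphi}$-orthogonal, and since it spans $V$ it is a $\beta_{\varphi}$-orthogonal basis of $V$.

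\emph{Main obstacle.} There is no serious difficulty once the correct complement is identified; the points to watch are (a) checking that $W'=W^{\perp_{\varphi}}\cap M$ is genuinely an object of $\Gamma(n,q)$ so that Corollary~\ref{cor.basebiorthopoints} applies, which is precisely Lemma~\ref{lem.complementingamma}, and (b) the reflexivity bookkeeping needed to deduce $\beta_{\varphi}(\varphi(w_i),\varphi(w_j))=0$ from $\beta_{\varphi}(w_i,w_j)=0$. This last step must hold uniformly across the linear-isometry, linear-anti-isometry, and $\sigma$-semilinear cases, and it is cleanest to avoid any case split by invoking only the reflexivity of the two forms rather than tracking how $\varphi$ rescales $\beta$.
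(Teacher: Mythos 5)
Your proof is correct and takes essentially the same route as the paper: for the extension inside $M$ you use the same complement $\langle W,\varphi(W)\rangle^{\perp}\cap M$ via Lemmas \ref{lem.multiform2} and \ref{lem.complementingamma} together with Corollary \ref{cor.basebiorthopoints}. For the second assertion the paper merely remarks that it follows from $\dim M=n$ and $M\in\Gamma(n,q)$; your explicit verification (spanning via Lemma \ref{lem.residuenondeg}, orthogonality via $\varphi^{2}=\mathrm{id}$, total isotropy of $M$, and reflexivity of the two forms) is exactly the intended argument, just written out.
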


\begin{proof}  Notice first that since $\Gamma(n,q)$ is a geometry, $W$ is contained in an $n$-dimensional object $M$ of $\Gamma(n,q)$.  Let $d=\dim(W)$ and let $\{w_{1},\ldots,w_{m}\}$ be a $\beta_{\varphi}$ orthogonal basis for $W$.  Then $\{w_{1},\ldots,w_{d}\}$ is a basis of biorthogonal points for $W$.  Let $\{w_{d+1},\ldots,w_{n}\}$ be a basis of biorthogonal points for $\langle W,\varphi(W)\rangle^{\perp}\cap M$, such a basis exists by combining Lemma \ref{lem.complementingamma} and Corollary \ref{cor.basebiorthopoints}.  Then $\{w_{1},\ldots,w_{n}\}$ forms a basis of biorthogonal points for $M$, which is in particular a $\beta_{\varphi}$ orthogonal basis for $M$.

That $\{w_{i},\varphi(w_{i})\}_{i=1}^{n}$ forms a $\beta_{\varphi}$ orthogonal basis for $V$ follows since $\dim M=n$ and $M\in\Gamma(n,q)$.
\end{proof}

\begin{cor}\label{cor.extendbasis}  If $W$ is a $k$-object of $\Gamma(n,q)$ with $\beta_{\varphi}$ orthogonal basis $\{w_{i}\}_{i=1}^{k}$ then there is a basis for $V$ of $\beta$ pre-hyperbolic pairs $\{e_{i},\varphi(e_{i})\}_{i=1}^{n}$ with $e_{i}=w_{i}$ for $i=1,\ldots,k$.
\end{cor}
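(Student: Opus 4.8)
The plan is to bootstrap from Lemma \ref{lem.hypbasis}. First I would invoke that lemma to extend the given $\beta_{\varphi}$ orthogonal basis $\{w_{i}\}_{i=1}^{k}$ of $W$ to a $\beta_{\varphi}$ orthogonal basis $\{w_{i}\}_{i=1}^{n}$ of an $n$-object $M\in\Gamma(n,q)$ containing $W$, and then note (again by Lemma \ref{lem.hypbasis}) that $\{w_{i},\varphi(w_{i})\}_{i=1}^{n}$ is a $\beta_{\varphi}$ orthogonal basis for all of $V$. Equivalently, via Corollary \ref{cor.basebiorthopoints} and Lemma \ref{lem.complementingamma}, I may take the $w_{i}$ for $i=1,\ldots,n$ to be pairwise biorthogonal points of $\Gamma(n,q)$, so the $2n$ vectors $\{w_{i},\varphi(w_{i})\}$ satisfy $\beta(w_{i},w_{j})=0$ for all $i,j$ (each $w_i$ is $\beta$-isotropic and $M$ is $\beta$-totally isotropic), $\beta(\varphi(w_{i}),\varphi(w_{j}))=0$ (apply $\varphi$, which is a similitude), and $\beta(w_{i},\varphi(w_{j}))=\beta_{\varphi}(w_{i},w_{j})=0$ for $i\neq j$. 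The only nonzero pairings among these basis vectors are the ``diagonal'' ones $\beta(w_{i},\varphi(w_{i}))=\beta_{\varphi}(w_{i},w_{i})=2Q_{\varphi}(w_{i})=:c_{i}\neq 0$, since each $w_{i}$ is a point of $\Gamma(n,q)$.

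Next I would rescale to turn each pair $(w_{i},\varphi(w_{i}))$ into a $\beta$ pre-hyperbolic pair of the form $(e_{i},\varphi(e_{i}))$ — and crucially, for $i\le k$ I must do this without disturbing $w_{i}$ itself. For $i=1,\ldots,k$ I simply set $e_{i}=w_{i}$; then $\varphi(e_{i})=\varphi(w_{i})$ is already $\beta$-isotropic and $\beta(e_{i},\varphi(e_{i}))=c_{i}\neq 0$, so $(e_{i},\varphi(e_{i}))$ is a $\beta$ pre-hyperbolic pair as required (the definition of pre-hyperbolic pair only asks $\beta(e_{i},\varphi(e_{i}))\neq 0$, not $=1$, so no normalization is needed). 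For $i=k+1,\ldots,n$ I just set $e_{i}=w_{i}$ as well — again $\{e_{i},\varphi(e_{i})\}$ is a $\beta$ pre-hyperbolic pair for the same reason. Since $\varphi$ is additive and the $w_{i}$ with $i=1,\ldots,n$ are biorthogonal points spanning $M$ with $M\in\Gamma(n,q)$ of dimension $n$, the collection $\{e_{i},\varphi(e_{i})\}_{i=1}^{n}=\{w_{i},\varphi(w_{i})\}_{i=1}^{n}$ is exactly the basis for $V$ furnished by Lemma \ref{lem.hypbasis}, and it restricts to the prescribed basis $\{w_{i}\}_{i=1}^{k}$ on $W$.

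Strictly speaking there is almost nothing to do beyond assembling these earlier results — the statement just repackages Lemma \ref{lem.hypbasis} in the language of ``$\beta$ pre-hyperbolic pairs,'' and the fact that no scalars are needed is precisely because ``pre-hyperbolic'' has been defined (just above Theorem \ref{thm.idchambers}) to require only $\beta(u,v)\neq 0$. The one place to be slightly careful is making sure that passing from an arbitrary $\beta_{\varphi}$ orthogonal basis of $W$ (whose vectors need not a priori be points of $\Gamma(n,q)$) to one extending to $M$ does not force rescaling the original $w_{i}$: but since $W\in\Gamma(n,q)$ is $\beta_{\varphi}$ non-degenerate, every vector in a $\beta_{\varphi}$ orthogonal basis of $W$ automatically has $\beta_{\varphi}(w_{i},w_{i})\neq 0$, i.e.\ is already a point of $\Gamma(n,q)$, so Lemma \ref{lem.hypbasis} applies verbatim and no modification of the $w_{i}$ is ever needed. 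Hence I expect no genuine obstacle; the ``hard part'' is merely citing the right combination of Lemma \ref{lem.hypbasis}, Lemma \ref{lem.complementingamma}, and Corollary \ref{cor.basebiorthopoints} and observing that $\varphi$ being a similitude makes the off-diagonal $\beta$-pairings among the $\varphi(w_{i})$ vanish.
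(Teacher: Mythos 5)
Your proposal is correct and is essentially the paper's own argument: extend the $\beta_{\varphi}$ orthogonal basis of $W$ via Lemma \ref{lem.hypbasis} and observe that each $\beta(e_{i},\varphi(e_{i}))=\beta_{\varphi}(e_{i},e_{i})\neq 0$, so the pairs $(e_{i},\varphi(e_{i}))$ are already $\beta$ pre-hyperbolic with no rescaling. The extra checks you include (that the $w_{i}$ are automatically points and that the off-diagonal $\beta$-pairings vanish) are just the details the paper leaves implicit.
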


\begin{proof}  This follows immediately from Lemma \ref{lem.hypbasis}, once one notices that if $\{e_{1},\ldots,e_{k}\}$ is a $\beta_{\varphi}$ orthogonal basis for $W$ then for each $i=1,\ldots,k$, $\beta(e_{i},\varphi(e_{i}))\neq 0$ since $\beta_{\varphi}(e_{i},e_{i})\neq 0$.
\end{proof}

\section{Linear Flips}

Throughout this section, $\varphi$ denotes a linear flip of $\Delta$.  Recall that we have identified $\varphi$ with a linear transformation of $V$ that induces $\varphi$ and satisfies the appropriate conclusion of Lemma \ref{lem.induceflip}.

\subsection{Classification of Linear Flips of the Unitary Building}\label{sec.linflips}

\begin{linflips}\label{thm.linearflips}  Let $\varphi$ be a linear flip of $\Delta$
\begin{enumerate}
\item[(i)]   If $\varphi$ is induced by an isometry of $(V,\beta)$ then there is a basis for $V$, $\{e_{i},f_{i}\}_{i=1}^{n}$ of $\beta$ hyperbolic pairs so that $\varphi(e_{i})=f_{i}$ and $\varphi(f_{i})=e_{i}$ for all $i=1,\ldots,n$.
\item[(ii)]  If $\varphi$ is induced by an anti-isometry of $(V,\beta)$ then there is a basis for $V$, $\{e_{i},f_{i}\}_{i=1}^{n}$ of $\beta$ hyperbolic pairs so that $\varphi(e_{i})=\alpha f_{i}$ and $\varphi(f_{i})=\alpha^{-1}e_{i}$ for all $i=1,\ldots,n$ where $\alpha$ is a trace 0 element of $\mathbb{F}$.
\end{enumerate}
Conversely any linear transformation of $V$ which satisfies (i) or (ii) induces a flip of $\Delta$.
\end{linflips}

\begin{proof}  From the proof of Lemma \ref{lem.induceflip} it follows that there is a basis of orthogonal $\beta$-hyperbolic pairs $\{h_{i},g_{i}\}_{i=1}^{n}$ so that
\begin{eqnarray*}
\varphi(h_{i})&=&\lambda_{i}g_{i}, \mbox{ and}\\
\varphi(g_{i})&=&\lambda_{i}^{-1}h_{i}
\end{eqnarray*}
for some $\lambda_{i}\in\mathbb{F}$.

\begin{enumerate}
\item[(i)]  Suppose that $\varphi$ is induced by an isometry of $(V,\beta)$.  Since $\beta_{\varphi}$ is $\sigma$-hermitian it follows that for all $i=1,\ldots,n$, $\sigma(\lambda_{i})=\beta_{\varphi}(h_{i},h_{i})\in\mathbb{F}_{q}$ and so in fact $\sigma(\lambda_{i})=\lambda_{i}$.

For $i=1,\ldots,n$ let $g_{i}'=\lambda_{i}g_{i}$.  Choose $\gamma_{i}\in\mathbb{F}$ so that $N_{\sigma}(\gamma_{i})=\lambda_{i}^{-1}$.  Define
\begin{eqnarray*}
e_{i}&=&\gamma_{i}h_{i}, \mbox{ and }\\
f_{i}&=&\gamma_{i}g_{i}'.
\end{eqnarray*}

We now calculate to show that $\{e_{i},f_{i}\}_{i=1}^{n}$ is a basis of $\beta$ hyperbolic pairs with $\varphi(e_{i})=f_{i}$ and $\varphi(f_{i})=e_{i}$.

\begin{eqnarray*}
\beta(e_{i},e_{j})&=&\beta(\gamma_{i}h_{i},\gamma_{j}h_{j})=0 \\
\beta(f_{i},f_{j})&=&\beta(\gamma_{i}g_{i}',\gamma_{i}g_{j}')=\beta(\gamma_{i}\lambda_{i}g_{i},\gamma_{j}\lambda_{j}g_{j})=0 \\
\beta(e_{i},f_{i})&=&\beta(\gamma_{i}h_{i},\gamma_{i}\lambda_{i}g_{i})=N_{\sigma}(\gamma_{i})\lambda_{i}=1 \\
\beta(e_{i},f_{j})&=&\beta(\gamma_{i}h_{i},\gamma_{j}\lambda_{j}g_{j})=0\mbox{ if }i\neq j.
\end{eqnarray*}
Thus $\{e_{i},f_{i}\}_{i=1}^{n}$ forms a $\beta$ hyperbolic basis for $V$, and finally
\begin{eqnarray*}
\varphi(e_{i})=\varphi(\gamma_{i}h_{i})&=&\gamma_{i}\varphi(h_{i})=\gamma_{i}\lambda_{i}g_{i}=\gamma_{i}g_{i}'=f_{i}\\
\varphi(f_{i})=\varphi(\gamma_{i}g_{i}')&=&\gamma_{i}\varphi(g_{i}')=\gamma_{i}\lambda_{i}\varphi(g_{i})=\gamma_{i}h_{i}=e_{i}.\end{eqnarray*}

\item[(ii)]  Suppose now that $\varphi$ is induced by an anti-isometry of $(V,\beta)$.  Since $\beta_{\varphi}$ is $\sigma$-antihermitian it follows that for all $i$, $\sigma(\lambda_{i})=\beta_{\varphi}(h_{i},h_{i})=-\lambda_{i}$, and so $\lambda_{i}$ is of trace 0.  Let $\alpha$ be any non-zero element of trace 0 in $\mathbb{F}$.  For each $i=1,\ldots,n$ choose $a_{i}\in\mathbb{F}_{q}$ so that $a_{i}\lambda_{i}=\alpha$.  Let $\gamma_{i}\in\mathbb{F}$ be chosen so that $N_{\sigma}(\gamma_{i})=a_{i}$.  Set $e_{i}=\gamma_{i}h_{i}$ and $f_{i}=\alpha^{-1}\gamma_{i}\lambda_{i}g_{i}$.  Direct calculation shows that $\{e_{i},f_{i}\}_{i=1}^{n}$ is a $\beta$ hyperbolic basis with the property that $\varphi(e_{i})=\alpha f_{i}$ and $\varphi(f_{i})=\alpha^{-1}e_{i}$.
\end{enumerate}

The converse follows from Lemma \ref{lem.converse}.
\end{proof}

It is now clear that the geometry $\Gamma(n,q)$ depends on whether $\varphi$ is an isometry or an anti-isometry.  With the basis found in Theorem \ref{thm.linearflips} we can see that when $n=1$ the number of points in the geometry depends on whether the flip is an isometry or an anti-isometry, implying that in larger rank the geometries are also not isomorphic.

\subsection{A Flag Transitive Automorphism Group of $\Gamma(n,q)$}

We are interested in finding a group that acts in a natural way on $\Gamma(n,q)$.  The obvious choice for this group is the group of linear transformations of $V$ that preserve both the forms $\beta$ and $\beta_{\varphi}$.  

\begin{defn}  Let $\mathrm{U}_{2n}(q^{2})^{\varphi}=\{f\in\mathrm{U}_{2n}(q^{2})|\beta_{\varphi}(u,v)=\beta_{\varphi}(f(u),f(v))\mbox{ for all }u,v\in V\}$.  
\end{defn}

In this section we will prove three results regarding $\mathrm{U}_{2n}(q^{2})^{\varphi}$.  First we will prove that it is precisely the centralizer in $\mathrm{U}_{2n}(q^{2})$ of $\varphi$.  We will then prove that it acts flag transitively on $\Gamma(n,q)$.  We conclude this section by proving that if $\varphi$ is induced by an isometry of $(V,\beta)$ then $\mathrm{U}_{2n}(q^{2})^{\varphi}\cong\mathrm{U}_{n}(q^{2})\times\mathrm{U}_{n}(q^{2})$ and if $\varphi$ is induced by an anti-isometry then $\mathrm{U}_{2n}(q^{2})^{\varphi}\cong \mathrm{GL}_{n}(q^{2})$.

\begin{lem}\label{lem.lcommutator}  $\mathrm{U}_{2n}(q^{2})^{\varphi}=C_{\Gamma\mathrm{U}_{2n}(q^{2})}(\varphi)\cap\mathrm{U}_{2n}(q^{2})$.
\end{lem}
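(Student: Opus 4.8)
The statement is an equality of two subgroups of $\mathrm{U}_{2n}(q^2)$, namely $\mathrm{U}_{2n}(q^{2})^{\varphi}$ and $C_{\Gamma\mathrm{U}_{2n}(q^{2})}(\varphi)\cap\mathrm{U}_{2n}(q^{2})$. The plan is to prove the two inclusions separately by a direct unwinding of the definition of $\beta_\varphi$ together with the fact that $\varphi$ is a linear isometry or anti-isometry with $\varphi^2=\mathrm{id}$. Fix $f\in\mathrm{U}_{2n}(q^2)$, so $\beta(f(u),f(v))=\beta(u,v)$ for all $u,v\in V$, and recall $\beta_\varphi(u,v)=\beta(u,\varphi(v))$.

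\emph{First I would show} $C_{\Gamma\mathrm{U}_{2n}(q^2)}(\varphi)\cap\mathrm{U}_{2n}(q^2)\subseteq \mathrm{U}_{2n}(q^2)^\varphi$. Suppose $f\in\mathrm{U}_{2n}(q^2)$ commutes with $\varphi$, i.e. $f\varphi=\varphi f$. Then for all $u,v\in V$,
\[
\beta_\varphi(f(u),f(v))=\beta(f(u),\varphi(f(v)))=\beta(f(u),f(\varphi(v)))=\beta(u,\varphi(v))=\beta_\varphi(u,v),
\]
using $f\varphi=\varphi f$ in the second step and $f\in\mathrm{U}_{2n}(q^2)$ in the third. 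Hence $f$ preserves $\beta_\varphi$, so $f\in\mathrm{U}_{2n}(q^2)^\varphi$.

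\emph{Next I would show} the reverse inclusion $\mathrm{U}_{2n}(q^2)^\varphi\subseteq C_{\Gamma\mathrm{U}_{2n}(q^2)}(\varphi)\cap\mathrm{U}_{2n}(q^2)$. Let $f\in\mathrm{U}_{2n}(q^2)$ preserve $\beta_\varphi$. For all $u,v\in V$ we then have $\beta(f(u),\varphi(f(v)))=\beta(u,\varphi(v))$; since $f$ preserves $\beta$ and $f$ is invertible, replacing $u$ by $f^{-1}(u)$ gives $\beta(u,\varphi(f(v)))=\beta(f^{-1}(u),\varphi(v))=\beta(u,f(\varphi(v)))$, where the last step again uses that $f$, hence $f^{-1}$, preserves $\beta$ (so $\beta(f^{-1}(u),w)=\beta(u,f(w))$). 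As this holds for all $u\in V$ and $\beta$ is non-degenerate, we conclude $\varphi(f(v))=f(\varphi(v))$ for all $v$, i.e. $f\varphi=\varphi f$, so $f\in C_{\Gamma\mathrm{U}_{2n}(q^2)}(\varphi)$; it lies in $\mathrm{U}_{2n}(q^2)$ by hypothesis. Combining the two inclusions gives the claimed equality.

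\emph{The main point to get right} is the left-versus-right placement of the semilinear/scalar factors when moving $f$ or $f^{-1}$ across the form $\beta$: since $V$ is a left vector space and $\beta$ is $\sigma$-sesquilinear in the second slot, one must verify carefully that $f\in\mathrm{U}_{2n}(q^2)$ implies the adjoint identity $\beta(f^{-1}(u),w)=\beta(u,f(w))$ with no stray scalar (this is exactly the isometry condition, valid here because the similitude factor $a$ equals $1$). No subtlety arises from $\varphi$ being an anti-isometry rather than an isometry, since we only use that $\varphi$ is linear — indeed the proof works for any linear $\varphi$ with $\varphi\in\Gamma\mathrm{U}_{2n}(q^2)$ and does not even require $\varphi^2=\mathrm{id}$. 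I would keep the write-up short, as each inclusion is a two-line computation once the adjoint identity is in hand.
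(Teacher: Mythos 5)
Your proof is correct and follows essentially the same route as the paper: one inclusion is the same two-line computation using $f\varphi=\varphi f$, and for the other your substitution $u\mapsto f^{-1}(u)$ together with the adjoint identity is exactly the paper's device of choosing $x$ with $f(x)=w$ and comparing $\beta(w,f(\varphi(v)))$ with $\beta(w,\varphi(f(v)))$ before invoking non-degeneracy of $\beta$. Your closing observation is also consistent with the paper, which reuses this very argument verbatim for the $\sigma$-semilinear case in Lemma \ref{lem.semicommutator}.
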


\begin{proof}  Let $f\in\mathrm{U}_{2n}(q^{2})^{\varphi}$ and $v\in V$.  To check that $\varphi$ commutes with $f$ we will show that for all $w\in V$, $\beta(w,f(\varphi(v)))=\beta(w,\varphi(f(v)))$.  Since $\beta$ is non-degenerate this will force $f(\varphi(v))=\varphi(f(v))$.

Let $w\in V$.  Choose $x\in V$ so that $f(x)=w$.  Then
\begin{eqnarray*}
\beta_{\varphi}(x,v)=\beta(x,\varphi(v))=\beta(f(x),f(\varphi(v)))&=&\beta(w,f(\varphi(v)))\mbox{ and }\\
\beta_{\varphi}(x,v)=\beta_{\varphi}(f(x),f(v))=\beta(f(x),\varphi(f(v)))&=&\beta(w,\varphi(f(v))).
\end{eqnarray*}

Conversely if $f\in\mathrm{U}_{2n}(q^{2})$ commutes with $\varphi$ then for all $u$, $v\in V$ we have
$$
\beta_{\varphi}(f(u),f(v))=\beta(f(u),\varphi(f(v)))=\beta(f(u),f(\varphi(v)))=\beta(u,\varphi(v))=\beta_{\varphi}(u,v).\qedhere
$$
\end{proof}

We now turn to the problem of showing that $\mathrm{U}_{2n}(q^{2})^{\varphi}$ acts flag transitively on $\Gamma(n,q)$.  Before we can prove that $\mathrm{U}_{2n}(q^{2})^{\varphi}$ acts flag transitively on $\Gamma(n,q)$ we require one more lemma.

\begin{lem}\label{lem.linflipnicestbasis}  Let $C=(C_{i})_{i=1}^{n}$ be a chamber of $\Gamma(n,q)$.  
\begin{enumerate}
\item[(a)]  If $\varphi$ is induced by an isometry of $(V,\beta)$ as in Lemma \ref{lem.induceflip} then there is a basis $\{e_{i},f_{i}\}_{i=1}^{n}$ for $V$ with the following properties:
\begin{enumerate}
\item[(i)]  $\{e_{i},f_{i}\}_{i=1}^{n}$ is hyperbolic with respect to $\beta$;
\item[(ii)]  for all $i=1,\ldots,n$, $\varphi(e_{i})=f_{i}$ and $\varphi(f_{i})=e_{i}$; and
\item[(iii)]  for all $i=1,\ldots,n$, $C_{i}=\langle e_{1},\ldots,e_{i}\rangle$.
\end{enumerate}
\item[(b)]  If $\varphi$ is induced by an anti-isometry of $(V,\beta)$ as in Lemma \ref{lem.induceflip} then there is a basis $\{e_{i},f_{i}\}_{i=1}^{n}$ for $V$ with properties (i) and (iii), and
\begin{enumerate}
\item[(ii')]  for all $i=1,\ldots,n$, $\varphi(e_{i})=\alpha f_{i}$ and $\varphi(f_{i})=\alpha^{-1}e_{i}$ where $\alpha$ is any non-zero element of trace 0 in $\mathbb{F}$.
\end{enumerate}
\end{enumerate}
\end{lem}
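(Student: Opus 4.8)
The plan is to reduce the lemma to a rescaling problem: first produce a basis of $V$ made of mutually $\beta$-orthogonal pre-hyperbolic pairs that is already adapted to the flag $C$ and on which $\varphi$ interchanges the two members of each pair, and then rescale inside each pair to turn the pre-hyperbolic pairs into hyperbolic pairs without disturbing the flag or the $\varphi$-action.

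For the first step, note that a chamber $C=(C_i)_{i=1}^{n}$ of $\Gamma(n,q)$ is in particular a chamber of $\Delta^{\varphi}$: the chain $C_1\subset\cdots\subset C_n$ is a maximal chain of totally isotropic subspaces, hence a chamber of $\Delta$, and it lies in $\Delta^{\varphi}$ by Theorem \ref{thm.idchambers}(1) since each $C_i$ is $\beta_{\varphi}$ non-degenerate. Running the construction behind Theorem \ref{thm.idchambers}(2) --- equivalently, choosing $h_i$ inductively to span $\langle C_{i-1},\varphi(C_{i-1})\rangle^{\perp}\cap C_i$ and invoking Lemmata \ref{lem.multiform2} and \ref{lem.complementingamma} and Corollary \ref{cor.basebiorthopoints} --- yields pairwise biorthogonal points $h_1,\ldots,h_n$ of $\Gamma(n,q)$ with $\langle h_1,\ldots,h_i\rangle=C_i$ for all $i$. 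Set $g_i=\varphi(h_i)$, so $\varphi(g_i)=h_i$ and $c_i:=\beta(h_i,g_i)=\beta_{\varphi}(h_i,h_i)\neq 0$; a short check using $\varphi^{2}=\mathrm{id}$ and the (anti-)isometry property shows that distinct pairs $\langle h_i,g_i\rangle$ are mutually $\beta$-orthogonal and that each $h_i,g_i$ is $\beta$-isotropic, so $\{h_i,g_i\}_{i=1}^{n}$ is a basis of $V$ of mutually orthogonal pre-hyperbolic pairs.

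For the rescaling, put $e_i=\gamma_i h_i$ and $f_i=\delta_i g_i$; since scaling basis vectors affects neither spans nor $\beta$-orthogonality, properties (i) and (iii) hold automatically, and I only need to fix the value $\beta(e_i,f_i)=1$ and the right coupling between $\gamma_i$ and $\delta_i$. In case (a), $\beta_{\varphi}$ is $\sigma$-hermitian by Lemma \ref{lem.linflipisoform}, so $c_i\in\mathbb{F}_q^{\times}$; taking $\delta_i=\gamma_i$ forces $\varphi(e_i)=f_i$, $\varphi(f_i)=e_i$, and gives $\beta(e_i,f_i)=N_{\sigma}(\gamma_i)c_i$, so I choose $\gamma_i$ with $N_{\sigma}(\gamma_i)=c_i^{-1}$, which exists since $N_{\sigma}$ is onto $\mathbb{F}_q$. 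In case (b), $\beta_{\varphi}$ is $\sigma$-antihermitian, so $c_i$ is a nonzero element of trace $0$; fixing a nonzero trace-$0$ element $\alpha$ and taking $\delta_i=\alpha^{-1}\gamma_i$ forces $\varphi(e_i)=\alpha f_i$, $\varphi(f_i)=\alpha^{-1}e_i$, and a one-line computation using $\sigma(\alpha)=-\alpha$ gives $\beta(e_i,f_i)=-\alpha^{-1}N_{\sigma}(\gamma_i)c_i$, so I choose $\gamma_i$ with $N_{\sigma}(\gamma_i)=-\alpha c_i^{-1}$.

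The one genuine point is that this last choice is legitimate: $-\alpha c_i^{-1}$ must be a norm, i.e.\ it must lie in $\mathbb{F}_q^{\times}$. This is exactly where the antihermitian nature of $\beta_{\varphi}$ (Lemma \ref{lem.linflipisoform}(ii)) is used --- since $\alpha$ and $c_i$ both have trace $0$, their quotient is fixed by $\sigma$, so $-\alpha c_i^{-1}\in\mathbb{F}_q^{\times}$ and is therefore a norm --- and it parallels the choice of the elements $a_i$ in the proof of Main Theorem 1A. Everything else (that the rescaled pairs remain mutually orthogonal and $\beta$-hyperbolic, that $\varphi^{2}=\mathrm{id}$ survives, and that $C_i=\langle e_1,\ldots,e_i\rangle$) is immediate from the fact that we only rescaled the vectors of an already-suitable basis; I expect this bookkeeping, rather than any conceptual step, to be the bulk of the written-out proof.
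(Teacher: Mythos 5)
Your proposal is correct and follows essentially the same route as the paper's own (much terser) proof: inductively choose points spanning $\langle C_{i-1},\varphi(C_{i-1})\rangle^{\perp}\cap C_{i}$ via Lemma \ref{lem.complementingamma} to get a $\varphi$-adapted basis of pre-hyperbolic pairs aligned with the flag, then rescale each pair exactly as in the proof of Main Theorem 1A(i)/(ii). Your explicit verification that $-\alpha c_i^{-1}\in\mathbb{F}_q^{\times}$ in the anti-isometry case is precisely the point the paper delegates to the scaling argument of Main Theorem 1A(ii), so there is no substantive difference.
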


\begin{proof}  
\begin{enumerate}
\item[(a)]  Let $e_{1}$ be a non-zero vector in $C_{1}$.  Then after scaling as in the proof of Main Theorem 1A(i) we may assume that $(e_{1},\varphi(e_{1}))$ is a hyperbolic pair.  Since $C_{1}^{\dperp}\cap C_{2}$ is an element of $\Gamma(n,q)$ by Lemma \ref{lem.complementingamma} we can choose $e_{2}\in C_{1}^{\dperp}\cap C_{2}$ so that after scaling, $(e_{2},\varphi(e_{2}))$ is a hyperbolic pair.  Repeating this procedure we produce the desired basis.
\item[(b)]  This is proved in the same fashion of (a), with the scaling as in the proof of Main Theorem 1A(ii) replacing the scaling in Main Theorem 1A(i).\qedhere
\end{enumerate}
\end{proof}

\begin{lft}  If $\varphi$ is a linear flip then $\mathrm{U}_{2n}(q^{2})^{\varphi}$ acts flag transitively on $\Gamma(n,q)$.
\end{lft}

\begin{proof}  Since $\Gamma(n,q)$ is a geometry, it suffices to show that $\mathrm{U}_{2n}(q^{2})^{\varphi}$ acts chamber transitively.  Let $C=(C_{i})_{i=1}^{n}$ and $D=(D_{i})_{i=1}^{n}$ be two chambers of $\Gamma(n,q)$.  By Lemma \ref{lem.linflipnicestbasis} we can find bases $\{e_{i},f_{i}\}_{i=1}^{n}$ and $\{g_{i},h_{i}\}_{i=1}^{n}$ for $C$ and $D$ respectively such that if $\varphi$ is induced by an isometry as in Lemma \ref{lem.induceflip}, both bases satisfy (a), and if $\varphi$ is induced by any anti-isometry as in Lemma \ref{lem.induceflip}, both bases satisfy (b) for the same $\alpha\in\mathbb{F}$ of trace 0.  Notice that $\{e_{i},\varphi(e_{i})\}_{i=1}^{n}$ and $\{g_{i},\varphi(g_{i})\}_{i=1}^{n}$ form bases for $V$.  Furthermore, the Gram matrix for $\beta$ is the same whether we take the basis $\{e_{i},\varphi(e_{i})\}_{i=1}^{n}$ or the basis $\{g_{i},\varphi(g_{i})\}_{i=1}^{n}$.  Similarly the Gram matrix for $\beta_{\varphi}$ does not depend on which of these two bases we consider.

Define $T:V\to V$ by $T(e_{i})=g_{i}$ and $T(\varphi(e_{i}))=\varphi(g_{i})$ and extend linearly.  It is easy to see that $T$ preserves $\beta$ and commutes with $\varphi$, and hence also preserves $\beta_{\varphi}$.  Thus $T$ is an element of $\mathrm{U}_{2n}(q^{2})^{\varphi}$ with $T(C)=D$.
\end{proof}

\begin{thm}  Let $\varphi$ be a linear flip of $\Delta$.
\begin{enumerate}
\item[(i)]  If $\varphi$ is induced by an isometry of $(V,\beta)$ as in Lemma \ref{lem.induceflip} then $\mathrm{U}_{2n}(q^{2})^{\varphi}\cong\mathrm{U}_{n}(q^{2})\times\mathrm{U}_{n}(q^{2})$.
\item[(ii)]  If $\varphi$ is induced by an anti-isometry of $(V,\beta)$ as in Lemma \ref{lem.induceflip} then $\mathrm{U}_{2n}(q^{2})^{\varphi}\cong\mathrm{GL}_{n}(q^{2})$.
\end{enumerate}
\end{thm}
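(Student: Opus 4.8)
The plan is to identify $\mathrm{U}_{2n}(q^{2})^{\varphi}$ with the centralizer $C_{\mathrm{U}_{2n}(q^{2})}(\varphi)$, which is legitimate by Lemma~\ref{lem.lcommutator}, and then exploit the eigenspace decomposition of $\varphi$. Since $\varphi^{2}=\mathrm{id}$ on $V$ and $\mathrm{char}\,\mathbb{F}$ is odd, we may write $V=V_{+}\oplus V_{-}$ with $V_{\pm}=\ker(\varphi\mp\mathrm{id})$. Using the explicit hyperbolic basis $\{e_{i},f_{i}\}_{i=1}^{n}$ furnished by Main Theorem~1A, one checks that in case~(i) (isometry) $V_{+}=\langle e_{i}+f_{i}\mid i\rangle$ and $V_{-}=\langle e_{i}-f_{i}\mid i\rangle$, while in case~(ii) (anti-isometry) $V_{+}=\langle e_{i}+\alpha f_{i}\mid i\rangle$ and $V_{-}=\langle e_{i}-\alpha f_{i}\mid i\rangle$; in both situations $\dim V_{\pm}=n$. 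A linear map stabilises both $V_{+}$ and $V_{-}$ precisely when it commutes with $\varphi$, so $\mathrm{U}_{2n}(q^{2})^{\varphi}$ is exactly the group of $\beta$-isometries of $V$ that stabilise $V_{+}$ and $V_{-}$.

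For part~(i) the key observation is that $V_{+}$ and $V_{-}$ are $\beta$-orthogonal: for $u\in V_{+}$ and $v\in V_{-}$, since $\varphi$ is a $\beta$-isometry, $\beta(u,v)=\beta(\varphi u,\varphi v)=\beta(u,-v)=-\beta(u,v)$, so $\beta(u,v)=0$. Thus $V=V_{+}\perp V_{-}$ is an orthogonal direct sum of two non-degenerate subspaces (non-degeneracy of each summand following from that of $V$), and each $\beta|_{V_{\pm}}$ is a non-degenerate $\sigma$-hermitian form in dimension $n$; since all such forms over $\mathbb{F}_{q^{2}}$ are equivalent, each $V_{\pm}$ carries the group $\mathrm{U}_{n}(q^{2})$. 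A $\beta$-isometry stabilising $V_{+}$ and $V_{-}$ restricts to isometries of $\beta|_{V_{+}}$ and $\beta|_{V_{-}}$, and conversely any such pair glues to an element of $\mathrm{U}_{2n}(q^{2})^{\varphi}$, yielding $\mathrm{U}_{2n}(q^{2})^{\varphi}\cong\mathrm{U}_{n}(q^{2})\times\mathrm{U}_{n}(q^{2})$.

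For part~(ii), by Lemma~\ref{lem.linflipisoform}(ii) the form $\beta_{\varphi}$ is $\sigma$-antihermitian; but $\beta_{\varphi}|_{V_{+}}=\beta|_{V_{+}}$ because $\varphi$ acts as the identity on $V_{+}$, and $\beta|_{V_{+}}$ is also a restriction of the $\sigma$-hermitian form $\beta$. A form that is simultaneously $\sigma$-hermitian and $\sigma$-antihermitian vanishes identically in odd characteristic, so $V_{+}$, and likewise $V_{-}$, is totally $\beta$-isotropic. Hence $V=V_{+}\oplus V_{-}$ is a polarisation and $\beta$ restricts to a perfect $\sigma$-sesquilinear pairing $V_{+}\times V_{-}\to\mathbb{F}$. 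Given $g\in\mathrm{GL}(V_{+})$ there is a unique $h\in\mathrm{GL}(V_{-})$ determined by the relation $\beta(gu,hv)=\beta(u,v)$ for all $u\in V_{+}$, $v\in V_{-}$ (using non-degeneracy of the pairing; one also checks $h$ is $\mathbb{F}$-linear from the conventions on $\beta$), and $g\oplus h$ is then the unique element of $\mathrm{U}_{2n}(q^{2})^{\varphi}$ restricting to $g$ on $V_{+}$. Since $g\mapsto g\oplus h$ respects composition, this gives $\mathrm{U}_{2n}(q^{2})^{\varphi}\cong\mathrm{GL}(V_{+})\cong\mathrm{GL}_{n}(q^{2})$.

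The routine content is the bookkeeping with the explicit bases and the left/right sesquilinearity conventions; the only step demanding genuine care is part~(ii), where one must verify that the adjoint assignment $g\mapsto h$ is well defined on all of $\mathrm{GL}(V_{+})$, that $g\oplus h$ is $\mathbb{F}$-linear and genuinely $\beta$-preserving (the diagonal blocks contribute nothing because $V_{\pm}$ are totally isotropic, so only the cross pairing needs checking), and that $g\mapsto g\oplus h$ is a homomorphism. I expect this verification, together with the remark that every element of $\mathrm{U}_{2n}(q^{2})^{\varphi}$ arises in this way once we know it stabilises $V_{+}$ and $V_{-}$, to be the main — though still elementary — obstacle.
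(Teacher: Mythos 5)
Your argument is correct, and it rests on the same basic decomposition as the paper's proof but executes it differently. The paper passes to the basis $g_{i}=e_{i}+f_{i}$, $h_{i}=e_{i}-f_{i}$ (resp. $g_{i}=e_{i}+\alpha f_{i}$, $h_{i}=e_{i}-\alpha f_{i}$) --- which is precisely a basis adapted to your eigenspaces $V_{\pm}$ --- writes down the Gram matrices of $\beta$ and $\beta_{\varphi}$ there, and solves the resulting block-matrix equations to force $B=C=0$ with $A,D$ unitary in case (i), and $B=C=0$ with $\sigma(A^{t})D=I_{n}$ in case (ii). You instead work invariantly: Lemma \ref{lem.lcommutator} identifies $\mathrm{U}_{2n}(q^{2})^{\varphi}$ with the $\beta$-isometries stabilising $V_{+}$ and $V_{-}$, and then the dichotomy isometry/anti-isometry becomes the dichotomy ``$V_{+}\perp V_{-}$ with both summands non-degenerate hermitian'' versus ``$V_{\pm}$ both totally isotropic, so $\beta$ is a perfect pairing $V_{+}\times V_{-}\to\mathbb{F}$,'' yielding $\mathrm{U}_{n}(q^{2})\times\mathrm{U}_{n}(q^{2})$ and $\mathrm{GL}_{n}(q^{2})$ respectively (the latter via the adjoint $g\mapsto h$, whose linearity, multiplicativity and surjectivity you correctly flag as the only points needing care; note also that preservation of the cross pairing in one order gives it in the other by hermitian symmetry of $\beta$). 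What your route buys is a conceptual explanation of why the two answers differ (orthogonal splitting versus polarisation) and independence from any coordinate computation; what the paper's route buys is completely explicit matrix descriptions of the subgroup in a concrete basis, which is convenient for later calculations. The only small caveat is that you do still need the explicit bases of Main Theorem 1A (as you use them) to know $\dim V_{\pm}=n$ in case (i); in case (ii) this also follows for free from total isotropy of both eigenspaces.
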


\begin{proof}  Let $\{e_{i},f_{i}\}_{i=1}^{n}$ be a basis for $V$ as in Main Theorem 1A.
\begin{enumerate}
\item[(i)]  Define a new basis for $V$ by $g_{i}=e_{i}+f_{i}$ for $i=1,\ldots,n$ and $h_{i}=e_{i}-f_{i}$ for $i=1,\ldots,n$.  Order this basis as $\{g_{1},\ldots,g_{n},h_{1},\ldots,h_{n}\}$.  Direct calculation shows that with respect to this (ordered) basis, $\beta$ and $\beta_{\varphi}$ have Gram matrices
$$
M_{1}=\left(\begin{array}{cc}
2I_{n} & 0 \\
0 & -2I_{n}\\
\end{array}\right)\mbox{ and }M_{2}=\left(\begin{array}{cc}
2I_{n} & 0 \\
0 & 2I_{n} \\
\end{array}\right)
$$
respectively.
Given a linear transformation $T$ of $V$, we can express $T$ as a block matrix
$$
T=\left(\begin{array}{cc}
A & B \\
C & D \\
\end{array}\right).
$$
Since both $\beta$ and $\beta_{\varphi}$ are hermitian, it follows that $T$ preserves both forms if and only if for $i=1,2$, $\sigma(T^{t})M_{i}T=M_{i}$ where $T^{t}$ denotes the transpose of $T$.  These two requirements are by direct calculation equivalent to the following four equalities:
\begin{eqnarray}
\sigma(A^{t})A-\sigma(C^{t})C=\sigma(D^{t})D-\sigma(B^{t})B&=& I_{n};\\
\sigma(A^{t})A+\sigma(C^{t})C=\sigma(B^{t})B+\sigma(D^{t})D&=&I_{n};\\
\sigma(A^{t})B-\sigma(C^{t})D=\sigma(B^{t})A-\sigma(D^{t})C&=&0;\\
\sigma(A^{t})B+\sigma(C^{t})D=\sigma(B^{t})A+\sigma(D^{t})C&=&0.
\end{eqnarray}
Adding (3) to (4) shows that $\sigma(A^{t})A=\sigma(D^{t})D=I_{n}$, and so $A$ and $D$ are unitary matrices.  Adding (5) to (6) and using the fact that $A$ is invertible shows that $B=0$.  Similarly subtracting (5) from (6) and using the fact that $D$ is invertible shows that $C=0$.  Thus in fact
$$
T=\left(\begin{array}{cc}
A & 0 \\
0 & D \\
\end{array}\right)
$$
where $A$ and $D$ are unitary matrices.  Conversely it is easy to check that if $A$ and $D$ are unitary matrices then 
$$
\left(\begin{array}{cc}
A & 0 \\
0 & D \\
\end{array}\right)
$$
preserves both $\beta$ and $\beta_{\varphi}$, and so lies in $\mathrm{U}_{2n}(q^{2})^{\varphi}$.

\item[(ii)]  The technique here is the same as in (i), but the details are different.  We only outline this part of the proof.  Define a new basis for $V$ by setting $g_{i}=e_{i}+\alpha f_{i}$ for $i=1,\ldots,n$ and $h_{i}=e_{i}-\alpha f_{i}$ for $i=n,\ldots,n$ and order this basis $\{g_{1},\ldots,g_{n},h_{1},\ldots,h_{n}\}$.  Considering a linear transformation of $V$ as a block matrix $T$ as above, direct calculation shows that $T$ preserves both $\beta$ and $\beta_{\varphi}$ if and only if $B=C=0$ and $\sigma(A^{t})D=I_{n}$.  Conversely for any $A\in\mathrm{GL}_{n}(q^{2})$ the matrix
$$
\left(\begin{array}{cc}
A & 0 \\
0 & (\sigma(A^{t}))^{-1}
\end{array}\right)
$$
preserves both $\beta$ and $\beta_{\varphi}$ and so lies in $\mathrm{U}_{2n}(q^{2})^{\varphi}$.\qedhere
\end{enumerate}
\end{proof}

\section{$\sigma$-Semilinear Flips}

We now turn our attention to the study of $\sigma$-semilinear flips of the unitary building.  In particular we prove in Theorem \ref{thm.classificationofflips} that there are only 2 similarity classes of $\sigma$-semilinear flips of the unitary building.

Throughout this section, $\varphi$ denotes a $\sigma$-semilinear flip.  Recall that we have identified the flip with a $\sigma$-semilinear isometry $f\in\Gamma\mathrm{U}(V)$ that induces $\varphi$ and satisfies $f^{2}=\mathrm{id}$ on $V$.

\subsection{Geometries Induced by a $\sigma$-semilinear Flip}\label{sec.sgeom}

We've already seen that the geometry induced by a flip $\varphi$ is related to a form $\beta_{\varphi}$ defined on $V$.  In the case of a linear flip we saw this form is $\sigma$-hermitian or $\sigma$-antihermitian.  As we saw in Lemma \ref{lem.sigmaform}, when $\varphi$ is a $\sigma$-semilinear flip of the unitary building, the induced form $\beta_{\varphi}$ is symmetric.  

\begin{defn}  Let $U$ be a subspace of $V$ and let $\mathcal{U}$ be a basis for $V$.  Recall that $\beta_{\varphi}(\mathcal{U})$ denotes the Gram matrix of the form $\beta_{\varphi}$ restricted to $U$ with respect to the basis $\mathcal{U}$.  The \textbf{discriminant} of $U$ is:
$$
\mathrm{disc}(U)=\left\{\begin{array}{r l}
1, & \mbox{ if }\det(\beta_{\varphi}(\mathcal{U}))\mbox{ is a square in }\mathbb{F};\\
-1, & \mbox{ if }\det(\beta_{\varphi}(\mathcal{U}))\mbox{ is a non-square in }\mathbb{F};\\
0, & \mbox{ if $U$ is $\beta_{\varphi}$ degenerate.}
\end{array}
\right.
$$
\end{defn}

\begin{defn}  A \textbf{square type} (resp. \textbf{non-square type}) $i$-space is an $i$-dimensional subspace $U$ of $V$ with $\mathrm{disc}(U)=1$ (resp. $\mathrm{disc}(U)=-1$).
\end{defn}

\begin{lem}\label{lem.splitdisc}  Let $U$, $U'\in\Gamma(n,q)$ with $U\subseteq U'$.  Let $W=\langle U,\varphi(U)\rangle^{\perp}\cap U'$.  Then
$$
\mathrm{disc}(U')=\mathrm{disc}(U)\mathrm{disc}(W).
$$
\end{lem}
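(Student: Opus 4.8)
The plan is to exhibit $U'$ as an internal $\beta_{\varphi}$-orthogonal direct sum $U'=U\oplus W$, choose a basis of $U'$ that respects this decomposition so that the Gram matrix of $\beta_{\varphi}|_{U'}$ becomes block-diagonal, and then read off the discriminant using multiplicativity of determinants together with the fact that $|\mathbb{F}^{\times}/(\mathbb{F}^{\times})^{2}|=2$ for the finite field $\mathbb{F}$ of odd characteristic.

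First I would invoke Lemma \ref{lem.complementingamma} to note that $W\in\Gamma(n,q)$; hence $U$, $U'$, and $W$ are all $\beta_{\varphi}$-non-degenerate, so each of $\mathrm{disc}(U)$, $\mathrm{disc}(U')$, $\mathrm{disc}(W)$ lies in $\{1,-1\}$ and none is $0$. By Lemma \ref{lem.multiform2} we have $W=U^{\perp_{\varphi}}\cap U'$, so $W$ is a $\beta_{\varphi}$-orthogonal complement to $U$ inside $U'$: indeed $U\cap W\subseteq U\cap U^{\perp_{\varphi}}=\{0\}$ because $U$ is $\beta_{\varphi}$-non-degenerate, and a dimension count (using $V=U\oplus U^{\perp_{\varphi}}$ and $U\subseteq U'$, which gives $U^{\perp_{\varphi}}+U'=V$) yields $\dim W=\dim U'-\dim U$. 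Therefore $U'=U\oplus W$ with $U\perp_{\varphi}W$.

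Now pick any basis $\mathcal{U}=\{u_{1},\ldots,u_{k}\}$ of $U$ and any basis $\mathcal{W}=\{w_{1},\ldots,w_{\ell}\}$ of $W$; their union is a basis of $U'$, and with respect to it the Gram matrix of $\beta_{\varphi}|_{U'}$ is the block-diagonal matrix with diagonal blocks $\beta_{\varphi}(\mathcal{U})$ and $\beta_{\varphi}(\mathcal{W})$. Taking determinants gives $\det\beta_{\varphi}(U')=\det\beta_{\varphi}(U)\cdot\det\beta_{\varphi}(W)$. Since in $\mathbb{F}$ the product of two squares, or of two non-squares, is a square, while the product of a square and a non-square is a non-square, the square/non-square class of the left-hand side is the product of those of the two factors, i.e. $\mathrm{disc}(U')=\mathrm{disc}(U)\,\mathrm{disc}(W)$.

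There is no real obstacle here; the one point that needs a word is that the discriminant is independent of the chosen basis, because a change of basis multiplies $\det\beta_{\varphi}$ by the square of the determinant of the transition matrix and hence does not alter the square/non-square dichotomy — this is precisely what licenses the convenient block-respecting choice of basis above. The lemma is then just multiplicativity of determinants on block-diagonal matrices combined with $|\mathbb{F}^{\times}/(\mathbb{F}^{\times})^{2}|=2$.
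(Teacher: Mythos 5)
Your proposal is correct and follows essentially the same route as the paper: identify $W$ as a $\beta_{\varphi}$-orthogonal complement to $U$ in $U'$ (via Lemma \ref{lem.multiform2}), take a basis adapted to the decomposition so the Gram matrix of $\beta_{\varphi}|_{U'}$ is block-diagonal, and conclude by multiplicativity of determinants and of square classes. The extra details you supply (the dimension count and the basis-independence of the discriminant) are fine but are exactly what the paper leaves implicit.
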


\begin{proof}  Since $W$ is a $\beta_{\varphi}$ orthogonal complement to $U$ in $U'$, we can choose a basis relative to this decomposition and so represent $\beta_{\varphi}(U')$ as a matrix with the form
$$
\beta_{\varphi}(U')=\left(
                   \begin{array}{cc}
                     \beta_{\varphi}(W) & 0 \\
                     0 & \beta_{\varphi}(U) \\
                   \end{array}
                 \right)
$$
which has determinant $(\det\beta_{\varphi}(W))(\det\beta_{\varphi}(U))$.
\end{proof}

We now define two pregeometries contained in $\Gamma(n,q)$.  We will prove shortly that these are in fact geometries and in Section \ref{sec.slinflag} we show that both these geometries are flag transitive.

\begin{defn}  Define the following pregeometries in $\Gamma(n,q)$:
\begin{eqnarray*}
\Gamma_{1}(n,q)&=&\{U\in\Gamma(n,q)|\mathrm{disc}(U)=1\mbox{ or }\dim(U)=n\};\\
\Gamma_{-1}(n,q)&=&\{U\in\Gamma(n,q)|\mathrm{disc}(U)=-1\mbox{ or }\dim(U)=n\}.
\end{eqnarray*}
\end{defn}

\begin{note}  Just as in the study of $\Gamma(n,q)$, a \textbf{point} of $\Gamma_{1}(n,q)$ (resp. $\Gamma_{-1}(n,q)$) is a 1-dimensional object of $\Gamma_{1}(n,q)$ (resp. $\Gamma_{-1}(n,q)$) and a \textbf{line} of $\Gamma_{1}(n,q)$ (resp. $\Gamma_{-1}(n,q)$) is a 2-dimensional object of $\Gamma_{1}(n,q)$ (resp. $\Gamma_{-1}(n,q)$).
\end{note}

It is not immediately clear why we take all $n$-dimensional elements of $\Gamma(n,q)$ in both $\Gamma_{1}(n,q)$ and $\Gamma_{-1}(n,q)$.  We will see shortly (Theorem \ref{thm.maxtype}) that the $n$-dimensional objects of $\Gamma(n,q)$ all have the same $\beta_{\varphi}$ type.  In order that $\Gamma_{1}(n,q)$ and $\Gamma_{-1}(n,q)$ are both geometries of rank $n$, we must include these $n$-dimensional objects.

We now explore the properties of the geometries $\Gamma(n,q)$, $\Gamma_{1}(n,q)$ and $\Gamma_{-1}(n,q)$ in some more detail.  Looking at these geometries will give insight into the structure of the flip.  The next two results admit a uniform proof.

\begin{lem}\label{lem.lineform}  If $L$ is a line of $\Gamma(n,q)$ then there are biorthogonal points $u$, $v$ of $\Gamma(n,q)$ so that $L=\langle u,v\rangle$.  If $L$ is a line of $\Gamma_{1}(n,q)$ we can assume both $u$ and $v$ are points of $\Gamma_{1}(n,q)$.  If $L$ is a line of $\Gamma_{-1}(n,q)$ then one of $u$ or $v$ is of square type and the other is of non-square type.
\end{lem}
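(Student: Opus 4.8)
The plan is to reduce everything to the rank-2 (i.e.\ $n=2$) situation via the residue machinery already set up, and then to classify lines by a direct analysis of the form $\beta_\varphi$ restricted to a $\beta$-isotropic $2$-space. First I would fix a line $L$ of $\Gamma(n,q)$. Since $L$ is an object of $\Gamma(n,q)$, it is $\beta$-isotropic and $\beta_\varphi$ non-degenerate, so by Corollary~\ref{cor.basebiorthopoints} $L$ already has a basis of biorthogonal points; this gives the first assertion immediately. The content is in identifying the discriminant types of those points. Here I would invoke Lemma~\ref{lem.splitdisc}: if $u\in L$ is a point of $\Gamma(n,q)$ and $v$ spans $\langle u,\varphi(u)\rangle^\perp\cap L$, which is again an object of $\Gamma(n,q)$ by Lemma~\ref{lem.complementingamma}, then $\mathrm{disc}(L)=\mathrm{disc}(u)\,\mathrm{disc}(v)$, and both $\mathrm{disc}(u),\mathrm{disc}(v)\in\{\pm 1\}$ since points of $\Gamma(n,q)$ are $\beta_\varphi$ non-degenerate.

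From this multiplicativity the three cases fall out once I know which discriminants of lines actually occur, and which biorthogonal point-pairs realize a given line-discriminant. For $\Gamma_1(n,q)$ the defining condition is $\mathrm{disc}(L)=1$ (the case $\dim L=n$ cannot happen for a line unless $n=2$, and for $n=2$ one uses Theorem~\ref{thm.maxtype} to pin down that common type), so I need to produce a biorthogonal decomposition $L=\langle u,v\rangle$ with $\mathrm{disc}(u)\mathrm{disc}(v)=1$ in which both factors equal $+1$; equivalently, to exhibit a point of square type inside $L$, after which its biorthogonal complement in $L$ automatically has square type too. To find such a point, start from any biorthogonal pair $u_0,v_0$ spanning $L$ and, if $\mathrm{disc}(u_0)=-1$, replace $u_0$ by a suitable vector $u_0+\mu v_0$: since $Q_\varphi(u_0+\mu v_0)=Q_\varphi(u_0)+\mu^2 Q_\varphi(v_0)$ (the cross term vanishes by $\beta_\varphi$-orthogonality and $\varphi$-semilinearity, using Lemma~\ref{lem.sigmaform}), and since $Q_\varphi(u_0),Q_\varphi(v_0)$ are nonzero with product having the same square class as $\mathrm{disc}(L)=1$, the value $Q_\varphi(u_0)+\mu^2 Q_\varphi(v_0)$ sweeps through a square as $\mu$ varies over $\mathbb{F}_q^\times$ — this is where one uses that a nondegenerate binary quadratic form over a finite field of square discriminant is isotropic/represents everything appropriately. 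For $\Gamma_{-1}(n,q)$ the condition is $\mathrm{disc}(L)=-1$, so by the product formula exactly one of the two biorthogonal factors is a square and the other a non-square, which is precisely the claimed statement; there is nothing further to arrange.

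The main obstacle, and the step I would write most carefully, is the scalar-adjustment argument in the $\Gamma_1$ case: showing that inside a $\beta_\varphi$-nondegenerate $\beta$-isotropic $2$-space $L$ of square discriminant one can always rotate a biorthogonal basis to make both basis vectors of square type. This is a statement about the binary form $\beta_\varphi|_L$ — concretely that $\mathrm{diag}(\delta_1,\delta_2)$ with $\delta_1\delta_2$ a square is equivalent over $\mathbb{F}_q$ to $\mathrm{diag}(\delta,\delta)$ — and it hinges on the finite-field fact that $x^2 + c y^2$ with $c$ a square (so $-c$ a non-square times $-1$, etc.) represents any prescribed nonzero value. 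One must be slightly careful that the adjusted vector $u_0+\mu v_0$ is still biorthogonal to the (adjusted) second vector; choosing the second vector afterward as the spanning vector of $\langle u,\varphi(u)\rangle^\perp\cap L$ sidesteps this, at the cost of re-deriving that its discriminant is $+1$, which is immediate from Lemma~\ref{lem.splitdisc} since now $\mathrm{disc}(L)=\mathrm{disc}(u)=1$. Everything else is routine bookkeeping with the biorthogonality relations and the identity $Q_\varphi(u+\mu v)=Q_\varphi(u)+\mu^2Q_\varphi(v)$ for $\beta_\varphi$-orthogonal $u,v$.
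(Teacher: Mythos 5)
Your route is the right one, and it is essentially the computation the paper waves at with ``straightforward calculations'': since $L$ is totally $\beta$-isotropic, biorthogonality inside $L$ is just $\beta_{\varphi}$-orthogonality, so everything reduces to the nondegenerate symmetric binary form $\beta_{\varphi}|_{L}$; Corollary~\ref{cor.basebiorthopoints} gives the biorthogonal point basis, Lemma~\ref{lem.splitdisc} settles the $\Gamma_{-1}$ case, and the $\Gamma_{1}$ case is the assertion that a square-discriminant binary form has an orthogonal basis of square-type vectors. However, one step is wrong as written: the scalars must range over $\mathbb{F}=\mathbb{F}_{q^{2}}$, not $\mathbb{F}_{q}$. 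The form $\beta_{\varphi}$ is $\mathbb{F}_{q^{2}}$-bilinear (Lemma~\ref{lem.sigmaform}) and discriminants are square classes in $\mathbb{F}_{q^{2}}$, and your sweep of $\mu$ over $\mathbb{F}_{q}^{\times}$ can genuinely find nothing: because $q$ is odd, $q-1$ divides $(q^{2}-1)/2$, so every element of $\mathbb{F}_{q}$ is a square in $\mathbb{F}_{q^{2}}$; hence if $Q_{\varphi}(u_{0})/Q_{\varphi}(v_{0})$ happens to lie in $\mathbb{F}_{q}$, then for $\mu\in\mathbb{F}_{q}$ the value $Q_{\varphi}(u_{0})+\mu^{2}Q_{\varphi}(v_{0})=Q_{\varphi}(v_{0})\bigl(Q_{\varphi}(u_{0})/Q_{\varphi}(v_{0})+\mu^{2}\bigr)$ is either $0$ or in the square class of $Q_{\varphi}(v_{0})$, i.e.\ never a nonzero square. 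Run the same argument over $\mathbb{F}_{q^{2}}$ and it is fine: a nondegenerate binary quadratic form over a finite field of odd characteristic represents every nonzero value, so $\beta_{\varphi}|_{L}$ represents a nonzero square; take $u\in L$ achieving it (automatically $\beta$-isotropic since $u\in L$), let $v$ span $\langle u,\varphi(u)\rangle^{\perp}\cap L$, and Lemmas~\ref{lem.complementingamma} and~\ref{lem.splitdisc} finish exactly as you say. Representing a prescribed square directly, rather than only along vectors $u_{0}+\mu v_{0}$, also spares you the fuss about whether the representation has nonzero first coordinate.

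A second, smaller point: your parenthetical appeal to Theorem~\ref{thm.maxtype} for the case $\dim L=n=2$ is circular, since that theorem comes later and its proof passes through Corollary~\ref{cor.bigbasis}, Corollary~\ref{cor.kflag}, Corollary~\ref{cor.pointobj1} and Lemma~\ref{lem.pointobjgamma}, which sit downstream of the present lemma via Corollary~\ref{cor.bothtypes}. It also would not rescue the literal statement when $n=2$ and the flip has non-square type (then a $2$-dimensional object of $\Gamma_{1}(2,q)$ may have $\mathrm{disc}=-1$, and no biorthogonal pair of square-type points can span it). The clean thing to do is prove the statement for lines in terms of $\mathrm{disc}(L)$ alone, which is all that is used later, and omit any reference to Theorem~\ref{thm.maxtype}.
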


\begin{cor}\label{cor.bothtypes}  Let $L$ be a line of $\Gamma(n,q)$.  Then $L$ contains points of both $\Gamma_{1}(n,q)$ and $\Gamma_{-1}(n,q)$.
\end{cor}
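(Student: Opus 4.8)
The plan is to split on $\mathrm{disc}(L)$, which equals $1$ or $-1$ since $L\in\Gamma(n,q)$ is $\beta_\varphi$-nondegenerate, and to reduce everything to Lemma~\ref{lem.lineform} together with one elementary counting fact about $\mathbb{F}=\mathbb{F}_{q^2}$.

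First I would dispose of the case $\mathrm{disc}(L)=-1$. Then $L$ is a line of $\Gamma_{-1}(n,q)$, so by Lemma~\ref{lem.lineform} there are biorthogonal points $u$, $v$ with $L=\langle u,v\rangle$, one of square type and one of non-square type; say $u$ is of square type and $v$ of non-square type. Then $\langle u\rangle$ is a point of $\Gamma_1(n,q)$ and $\langle v\rangle$ a point of $\Gamma_{-1}(n,q)$, both contained in $L$, and we are done.

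The substantive case is $\mathrm{disc}(L)=1$, i.e.\ $L$ is a line of $\Gamma_1(n,q)$. Here Lemma~\ref{lem.lineform} gives $L=\langle u,v\rangle$ with $u$, $v$ biorthogonal points of $\Gamma_1(n,q)$, so $L$ already contains a point of $\Gamma_1(n,q)$ and it remains only to produce a point of $\Gamma_{-1}(n,q)$ inside $L$. The idea is to inspect the one-dimensional subspaces $\langle u+\lambda v\rangle$ for $\lambda\in\mathbb{F}$. Since $L$ is $\beta$-isotropic, each $u+\lambda v$ is automatically $\beta$-isotropic, so $\langle u+\lambda v\rangle$ is a point of $\Gamma(n,q)$ whenever $\beta_\varphi(u+\lambda v,u+\lambda v)\neq 0$, and its type is determined by whether that value is a square or a non-square. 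As $\beta_\varphi$ is symmetric and bilinear (Lemma~\ref{lem.sigmaform}) and $\beta_\varphi(u,v)=0$, a one-line computation gives $\beta_\varphi(u+\lambda v,u+\lambda v)=a+\lambda^2 b$, where $a=\beta_\varphi(u,u)$ and $b=\beta_\varphi(v,v)$ are nonzero squares in $\mathbb{F}$ (because $u,v$ are points of $\Gamma_1(n,q)$). So the whole corollary comes down to the claim that $a+\lambda^2 b$ takes a non-square value for some $\lambda\in\mathbb{F}$; this is the only genuine obstacle, everything else being immediate from Lemma~\ref{lem.lineform} and the bilinearity of $\beta_\varphi$.

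To finish, dividing by the square $b$ and writing $a/b=e^2$ with $e\neq 0$, it suffices to show that $e^2+\lambda^2$ is a nonzero non-square for some $\lambda$. Since $q$ is odd, $-1$ is a square in $\mathbb{F}_{q^2}$, so counting the solutions $(\mu,\lambda)$ of $e^2+\lambda^2=\mu^2$, i.e.\ of $(\mu-\lambda)(\mu+\lambda)=e^2$, via the substitution $(s,t)=(\mu-\lambda,\mu+\lambda)$ (which turns the equation into $st=e^2$) shows there are $q^2-1$ such pairs; discarding the two with $\mu=0$ and grouping the rest by the value of $\lambda$, exactly $(q^2-3)/2$ values of $\lambda$ make $e^2+\lambda^2$ a nonzero square and $2$ make it zero, leaving $(q^2-1)/2>0$ values of $\lambda$ for which $e^2+\lambda^2$ is a non-square. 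Any such $\lambda$ yields a point $\langle u+\lambda v\rangle$ of $\Gamma_{-1}(n,q)$ lying in $L$, which completes the proof. (Alternatively, one may simply invoke the classical fact that a nondegenerate binary quadratic form over a finite field represents every nonzero element of the field, applied to $\beta_\varphi|_L$, to produce directly vectors of $L$ on which $\beta_\varphi$ takes a nonzero square value and a nonzero non-square value.)
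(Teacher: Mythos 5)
Your proof is correct. The paper itself disposes of Lemma \ref{lem.lineform} and Corollary \ref{cor.bothtypes} in one line (``the results follow by straightforward calculations''), so your write-up is essentially the calculation the authors leave to the reader, organized slightly differently: you split on $\mathrm{disc}(L)$, use Lemma \ref{lem.lineform} to settle the case $\mathrm{disc}(L)=-1$ outright, and in the case $\mathrm{disc}(L)=1$ you produce a non-square type point $\langle u+\lambda v\rangle$ by the hyperbola count for $\mu^{2}-\lambda^{2}=e^{2}$ over $\mathbb{F}_{q^{2}}$ (where indeed $-1$ is always a square since $4\mid q^{2}-1$), which correctly yields $(q^{2}-1)/2$ values of $\lambda$ with $e^{2}+\lambda^{2}$ a non-square. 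All the supporting facts you invoke are legitimate at this point of the paper: $\beta_{\varphi}$ is symmetric bilinear (Lemma \ref{lem.sigmaform}), so the square/non-square type of a point is well defined, and $\beta$-isotropy is automatic inside $L$. The only comment worth making is that your closing parenthetical is really the cleanest route and handles both cases uniformly: since $\beta_{\varphi}|_{L}$ is a non-degenerate binary symmetric form over a finite field of odd characteristic, it represents every element of $\mathbb{F}^{\ast}$, so $L$ contains vectors of both square and non-square $\beta_{\varphi}$-value without any appeal to Lemma \ref{lem.lineform} or to the case split; your explicit counting buys self-containedness at the cost of a longer argument.
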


\begin{proof}[Proof of Lemma \ref{lem.lineform} and Corollary \ref{cor.bothtypes}]   Since $L$ is already $\beta$ totally isotropic it suffices to only consider the form $\beta_{\varphi}$.  The results follow by straightforward calculations.
\end{proof}

\begin{lem}\label{lem.pointobjgamma}  If $\dim U\geq 2$ and $U\in\Gamma(n,q)$, then $U$ contains points of both $\Gamma_{1}(n,q)$ and $\Gamma_{-1}(n,q)$.
\end{lem}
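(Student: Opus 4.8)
The plan is to reduce immediately to the rank two case already settled in Corollary \ref{cor.bothtypes}.  Since $\dim U\geq 2$ and $U\in\Gamma(n,q)$, Corollary \ref{cor.basebiorthopoints} provides a basis $\{w_i\}_{i=1}^{\dim U}$ of $U$ consisting of biorthogonal points of $\Gamma(n,q)$.  Fix any two of them, say $w_1$ and $w_2$, and set $L=\langle w_1,w_2\rangle$.

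Next I would check that $L$ is a line of $\Gamma(n,q)$.  Because $w_1$ and $w_2$ are $\beta$-isotropic and $\beta(w_1,w_2)=0$, every vector of $L$ is $\beta$-isotropic, so $L$ is $\beta$ totally isotropic.  Because $\beta_{\varphi}(w_1,w_2)=0$ and each $w_i$ is a point of $\Gamma(n,q)$ (so $\beta_{\varphi}(w_i,w_i)=2Q_{\varphi}(w_i)\neq 0$), the Gram matrix of $\beta_{\varphi}|_{L}$ with respect to $\{w_1,w_2\}$ is diagonal with nonzero diagonal entries, hence nonsingular; thus $L$ is $\beta_{\varphi}$ non-degenerate.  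Therefore $L\in\Gamma(n,q)$ and $\dim L=2$, i.e.\ $L$ is a line of $\Gamma(n,q)$.

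Finally, Corollary \ref{cor.bothtypes} applies to $L$: it contains a point of $\Gamma_{1}(n,q)$ and a point of $\Gamma_{-1}(n,q)$.  Since $L\subseteq U$, these are points of $\Gamma_{1}(n,q)$ and $\Gamma_{-1}(n,q)$ contained in $U$, which is what we wanted.  There is no real obstacle here; the only point requiring care is the verification that $L\in\Gamma(n,q)$, and this is forced by biorthogonality.  (One could alternatively argue directly, choosing inside the $\beta_{\varphi}$ non-degenerate space $U$ a pair of vectors with nonzero $\beta_{\varphi}$-product and adjusting by a scalar so as to land in whichever square class is desired, mirroring the proof of Lemma \ref{lem.gammapointexist}; but the reduction to Corollary \ref{cor.bothtypes} is cleaner.)
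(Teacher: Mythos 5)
Your proof is correct and follows the route the paper intends (the lemma is stated without an explicit proof, being regarded as immediate from the preceding results): use Corollary \ref{cor.basebiorthopoints} to find two biorthogonal points in $U$, note their span is a line of $\Gamma(n,q)$, and apply Corollary \ref{cor.bothtypes}. Your verification that $L=\langle w_1,w_2\rangle$ is $\beta_{\varphi}$ non-degenerate is exactly the small check that makes this reduction work, so nothing is missing.
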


\begin{cor}\label{cor.pointobj1}  If $U\in\Gamma_{\epsilon}(n,q)$ then $U$ contains a point of $\Gamma_{\epsilon}(n,q)$.
\end{cor}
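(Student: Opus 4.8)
The plan is to get this as an immediate consequence of Lemma~\ref{lem.pointobjgamma}, the only extra work being a (trivial) inspection of the one-dimensional case. Fix $\epsilon\in\{1,-1\}$ and let $U\in\Gamma_{\epsilon}(n,q)$; note that $U\in\Gamma(n,q)$ since $\Gamma_{\epsilon}(n,q)\subseteq\Gamma(n,q)$.

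First I would split on $\dim U$. If $\dim U\geq 2$, then Lemma~\ref{lem.pointobjgamma} applies directly (it requires only $U\in\Gamma(n,q)$ and $\dim U\geq 2$) and produces points of both $\Gamma_{1}(n,q)$ and $\Gamma_{-1}(n,q)$ contained in $U$; in particular $U$ contains a point of $\Gamma_{\epsilon}(n,q)$, which is what we want. If instead $\dim U=1$, then since $n\geq 2$ we have $\dim U<n$, so the only way $U$ can lie in $\Gamma_{\epsilon}(n,q)$ is to have $\mathrm{disc}(U)=\epsilon$. Hence $U$ is a $1$-dimensional object of $\Gamma_{\epsilon}(n,q)$, i.e. $U$ is itself a point of $\Gamma_{\epsilon}(n,q)$, and therefore contains one.

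There is no genuine obstacle here: all the geometric content already sits in Lemma~\ref{lem.pointobjgamma} (and behind it Lemma~\ref{lem.lineform} and Corollary~\ref{cor.bothtypes}), and this corollary merely records that the point guaranteed there can be taken of the prescribed discriminant type when $U$ itself has that type. The single point to be mildly careful about is that $\Gamma_{\epsilon}(n,q)$ admits all $n$-dimensional objects irrespective of discriminant, so for $\dim U=n$ one should not attempt to deduce anything from $\mathrm{disc}(U)$; but that case is subsumed under the $\dim U\geq 2$ branch since $n\geq 2$.
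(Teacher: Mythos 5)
Your proof is correct and is essentially the paper's own argument: for $\dim U=1$ the space $U$ is itself a point of $\Gamma_{\epsilon}(n,q)$, and for $\dim U\geq 2$ Lemma \ref{lem.pointobjgamma} supplies a point of each type, hence one of type $\epsilon$. The extra remark about $n$-dimensional objects is a harmless clarification and does not change the argument.
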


\begin{proof}  If $\dim(U)=1$ then $U$ is a point of $\Gamma_{\epsilon}(n,q)$.  If $\dim(U)>1$ then we can apply Lemma \ref{lem.pointobjgamma} to conclude that $U$ contains a point of $\Gamma_{\epsilon}(n,q)$.
\end{proof}

\begin{cor}\label{cor.kflag}  A $h$-object $U$ of $\Gamma_{1}(n,q)$ with $h<n$ has a basis $\{u_{1},\ldots,u_{h}\}$ of square type points that are pairwise biorthogonal.  Consequently any $h$-object $U$ of $\Gamma_{1}(n,q)$ contains a $\{1,\ldots,h\}$ flag of $\Gamma_{1}(n,q)$.
\end{cor}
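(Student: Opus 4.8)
The plan is to prove the first assertion by induction on $h$, and then deduce the flag statement, treating the case $h=n$ separately since the first assertion explicitly excludes it.

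\emph{First assertion.} The base case $h=1$ is immediate, since a $1$-object of $\Gamma_1(n,q)$ is by definition a square type point. For the inductive step, let $U\in\Gamma_1(n,q)$ with $2\le h=\dim U<n$, so that $\mathrm{disc}(U)=1$. By Corollary~\ref{cor.pointobj1} (with $\epsilon=1$) there is a square type point $u_1\in U$. Put $W=\langle u_1\rangle^{\dperp}\cap U$; by Lemma~\ref{lem.multiform2} this equals $\langle u_1\rangle^{\perp_\varphi}\cap U$, hence has codimension $1$ in $U$, and by Lemma~\ref{lem.complementingamma} it lies in $\Gamma(n,q)$. Applying Lemma~\ref{lem.splitdisc} to the inclusion $\langle u_1\rangle\subseteq U$ gives $1=\mathrm{disc}(U)=\mathrm{disc}(\langle u_1\rangle)\,\mathrm{disc}(W)=\mathrm{disc}(W)$, so $W$ is an $(h-1)$-object of $\Gamma_1(n,q)$. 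By the inductive hypothesis $W$ has a basis $\{u_2,\dots,u_h\}$ of pairwise biorthogonal square type points; since $u_2,\dots,u_h\in W\subseteq\langle u_1\rangle^{\dperp}$ and $u_1\notin W$ (because $\beta_\varphi(u_1,u_1)\ne 0$), the set $\{u_1,\dots,u_h\}$ is a basis of $U$ consisting of pairwise biorthogonal square type points.

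\emph{Flag statement.} If $h<n$, take the basis $\{u_1,\dots,u_h\}$ just constructed and set $U_i=\langle u_1,\dots,u_i\rangle$ for $i=1,\dots,h$. Each $U_i$ is $\beta$-isotropic, and because the $u_j$ are pairwise $\beta_\varphi$-orthogonal with each $\beta_\varphi(u_j,u_j)$ a nonzero square, the Gram matrix of $\beta_\varphi|_{U_i}$ is diagonal with square determinant; hence $U_i\in\Gamma(n,q)$ with $\mathrm{disc}(U_i)=1$, so $U_i\in\Gamma_1(n,q)$, and $U_1\subset\cdots\subset U_h$ is a flag of type $\{1,\dots,h\}$. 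If $h=n$, I would first pass to dimension $n-1$: since $n\ge 2$, Lemma~\ref{lem.pointobjgamma} provides a point $u\in U$ with $\mathrm{disc}(\langle u\rangle)=\mathrm{disc}(U)$; then $W=\langle u\rangle^{\dperp}\cap U$ lies in $\Gamma(n,q)$ by Lemma~\ref{lem.complementingamma}, has dimension $n-1$, and, by Lemma~\ref{lem.splitdisc} together with $\mathrm{disc}(\langle u\rangle)=\mathrm{disc}(U)$, satisfies $\mathrm{disc}(W)=1$, so $W\in\Gamma_1(n,q)$. The first assertion then yields a flag of type $\{1,\dots,n-1\}$ inside $W\subseteq U$, and adjoining $U$ itself --- which belongs to $\Gamma_1(n,q)$ as an $n$-object irrespective of its type --- produces a flag of type $\{1,\dots,n\}$.

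The whole argument is routine given the discriminant bookkeeping already set up in Lemmas~\ref{lem.splitdisc}, \ref{lem.multiform2} and \ref{lem.complementingamma}; the only point requiring care is the $h=n$ case of the flag statement, where $U$ may have non-square discriminant and so the first assertion cannot be applied to $U$ directly. One peels off a single point whose $\beta_\varphi$-type matches $\mathrm{disc}(U)$, lands on an $(n-1)$-object of square discriminant, and uses that every $n$-object is an admissible top element of $\Gamma_1(n,q)$.
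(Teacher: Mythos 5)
Your proof is correct and follows essentially the same route as the paper: induction on $h$, peeling off a square type point via Corollary~\ref{cor.pointobj1} and tracking discriminants with Lemma~\ref{lem.splitdisc} (together with Lemmas~\ref{lem.multiform2} and~\ref{lem.complementingamma}), exactly as the paper's one-line reduction to the argument of Corollary~\ref{cor.basebiorthopoints} intends. Your separate treatment of $h=n$ in the flag statement is a small, correct addition beyond what the paper records, but not a different method.
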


\begin{proof}  The conclusion follows by induction on $h$ as in the proof of Corollary \ref{cor.basebiorthopoints} with Corollary \ref{cor.pointobj1} replacing Lemma \ref{lem.gammapointexist} and Lemma \ref{lem.splitdisc} replacing Lemma \ref{lem.complementingamma}. 
\end{proof}

\begin{cor}\label{cor.bigbasis}  Let $M$ be an $n$-object of $\Gamma(n,q)$.  If $M$ is of square type then $M$ has a basis $\{u_{1},\ldots,u_{n}\}$ of pairwise biorthogonal square type points.  If $M$ is of non-square type them $M$ has a basis $\{v_{1},\ldots,v_{n}\}$ of pairwise biorthogonal points, where for $i=1,\ldots,n-1$, $v_{i}$ is of square type and $v_{n}$ is of non-square type.
\end{cor}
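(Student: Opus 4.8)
The plan is to build the basis one biorthogonal point at a time, exactly as in the proof of Corollary \ref{cor.basebiorthopoints}, but choosing a \emph{square type} point at every stage and tracking the discriminant with Lemma \ref{lem.splitdisc}. A single construction handles both cases; only the type of the last basis vector varies, and that type turns out to be forced.

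First I would set $M_0 = M$ and, for $j = 1, \ldots, n-1$, invoke Lemma \ref{lem.pointobjgamma} to choose a square type point $v_j \in M_{j-1}$ --- legitimate since $\dim M_{j-1} = n-j+1 \ge 2$ throughout that range --- and put $M_j = \langle v_j \rangle^{\dperp} \cap M_{j-1}$. By Lemmata \ref{lem.multiform1}, \ref{lem.multiform2} and \ref{lem.complementingamma}, $M_j$ is the $\beta_\varphi$-orthogonal complement of $\langle v_1, \ldots, v_j \rangle$ in $M$, again an object of $\Gamma(n,q)$, of dimension $n-j$; in particular $M_{n-1}$ is a point, which I would take to be $\langle v_n \rangle$. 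As in Corollary \ref{cor.basebiorthopoints}, since each $v_j$ lies in $M_{j-1} \subseteq \langle v_i, \varphi(v_i)\rangle^{\perp}$ for $i < j$ and $\beta_\varphi$ is reflexive (Lemma \ref{lem.newform}), the points $v_1, \ldots, v_n$ are pairwise biorthogonal and form a basis of $M$.

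It then remains only to identify the type of $v_n$. Applying Lemma \ref{lem.splitdisc} to each pair $\langle v_j \rangle \subseteq M_{j-1}$ gives $\mathrm{disc}(M_{j-1}) = \mathrm{disc}(\langle v_j \rangle)\,\mathrm{disc}(M_j)$, and chaining these from $M_0 = M$ down to $M_{n-1} = \langle v_n \rangle$ yields $\mathrm{disc}(M) = \prod_{i=1}^{n} \mathrm{disc}(\langle v_i \rangle) = \mathrm{disc}(\langle v_n \rangle)$, since the first $n-1$ points are of square type. Hence $v_n$ is of square type precisely when $M$ is, and this is exactly the content of both assertions of the corollary.

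The construction is largely mechanical given the earlier lemmata; the two points to watch are that every $M_j$ must stay inside $\Gamma(n,q)$ (otherwise its discriminant is undefined or zero and the telescoping breaks), and that --- because Lemma \ref{lem.pointobjgamma} only supplies square type points in spaces of dimension at least $2$ --- the procedure necessarily terminates at a $1$-dimensional space whose type is \emph{forced} rather than chosen. Checking that this forced type comes out correctly is the only real content, and it is delivered by the multiplicativity in Lemma \ref{lem.splitdisc}.
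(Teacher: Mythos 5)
Your proof is correct, and it differs from the paper's in a small but genuine way. The paper splits into cases up front: if $M$ is of square type it picks a square type point and applies Corollary \ref{cor.kflag} to $M\cap\langle u_{1}\rangle^{\dperp}$; if $M$ is of non-square type it peels off a \emph{non-square} point $v_{n}$ first, so that the biorthogonal complement in $M$ has discriminant $1$ and Corollary \ref{cor.kflag} again applies to supply the remaining square type points. You instead run a single uniform greedy construction inside $\Gamma(n,q)$, choosing square type points via Lemma \ref{lem.pointobjgamma} for the first $n-1$ steps and letting the final one-dimensional space be forced, with its type read off from the telescoped identity $\mathrm{disc}(M)=\prod_{i}\mathrm{disc}(\langle v_{i}\rangle)$ coming from Lemma \ref{lem.splitdisc}. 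Note that in the non-square case your intermediate spaces $M_{j}$ all have discriminant $-1$, so Corollary \ref{cor.kflag} could not have been quoted for them; your reliance on Lemma \ref{lem.pointobjgamma} (square points exist in any object of $\Gamma(n,q)$ of dimension at least $2$) rather than on Corollary \ref{cor.kflag} is exactly what makes the uniform treatment legitimate, and it also keeps your argument independent of that corollary, essentially re-deriving it along the way. The paper's ordering (exceptional point first) buys a two-line proof by delegation; your ordering (exceptional point last) buys a case-free construction at the cost of redoing the induction and carrying the discriminant bookkeeping explicitly. Both arguments rest on the same underlying lemmata, and your verification that each $M_{j}$ stays in $\Gamma(n,q)$ (so that the discriminants are nonzero and multiplicative) is precisely the point that needed checking.
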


\begin{proof}  If $M$ is of square type it contains a square type point $u_{1}$.  By applying Corollary \ref{cor.kflag} to $M'=M\cap \langle u_{1}\rangle^{\dperp}$ we produce the remaining points.  If $M$ is of non-square type it contains a non-square type point $v_{n}$ and applying Corollary \ref{cor.kflag} to $M'=\langle v_{n}\rangle^{\dperp}$ produces the remaining points.
\end{proof}

\begin{rem}   We will prove in Theorem \ref{thm.maxtype} that for a fixed flip $\varphi$, every maximal object has the same $\beta_{\varphi}$ type.
\end{rem}

\begin{lem}\label{lem.gamma1pointresidue}  Let $u$ be a point of $\Gamma_{\epsilon}(n,q)$ for $\epsilon\in\{1,-1\}$.  Then $\mathrm{res}_{\Gamma_{\epsilon}(n,q)}(u)\cong\Gamma_{1}(n-1,q)$.
\end{lem}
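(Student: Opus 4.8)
The plan is to reduce everything to the residue statement for $\Gamma(n,q)$ from Corollary~\ref{cor.gammaresidue} and then bookkeep discriminants with Lemma~\ref{lem.splitdisc}. First I would fix the point $u$ of $\Gamma_{\epsilon}(n,q)$, put $W=\langle u,\varphi(u)\rangle^{\perp}$, and note via Lemmas~\ref{lem.residuenondeg} and~\ref{lem.restrictflip} that $W$ is $\varphi$-invariant, non-degenerate, $(2n-2)$-dimensional, and that $\varphi|_{W}$ is a $\sigma$-semilinear flip of $(W,\beta|_{W})$; since $\beta_{\varphi|_{W}}$ is the restriction of $\beta_{\varphi}$ to $W$, the geometries $\Gamma(n-1,q)\supseteq\Gamma_{1}(n-1,q)$ attached to $\varphi|_{W}$ sit inside $W$. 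Corollary~\ref{cor.gammaresidue} then gives that $r_{u}\colon U\mapsto U\cap u^{\dperp}$ is an isomorphism of pregeometries $\mathrm{res}_{\Gamma(n,q)}(u)\to\Gamma(n-1,q)$, and by Lemma~\ref{lem.multiform2} one has $r_{u}(U)=\langle u,\varphi(u)\rangle^{\perp}\cap U$ with $\dim r_{u}(U)=\dim U-1$ for $U$ incident to $u$.

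Next I would track discriminants. Applying Lemma~\ref{lem.splitdisc} to $\langle u\rangle\subseteq U$ gives $\mathrm{disc}(U)=\mathrm{disc}(\langle u\rangle)\,\mathrm{disc}(r_{u}(U))$, and $\mathrm{disc}(\langle u\rangle)=\epsilon$ because $u$ is a point of $\Gamma_{\epsilon}(n,q)$; hence $\mathrm{disc}(r_{u}(U))=\epsilon\,\mathrm{disc}(U)$. From this I would check both inclusions. For an object $U$ in $\mathrm{res}_{\Gamma_{\epsilon}(n,q)}(u)$: if $\dim U=n$ then $r_{u}(U)$ has dimension $n-1$ and lies in $\Gamma_{1}(n-1,q)$ by definition; if $\dim U<n$ then $\mathrm{disc}(U)=\epsilon$, so $\mathrm{disc}(r_{u}(U))=\epsilon^{2}=1$ and $\dim r_{u}(U)<n-1$, so again $r_{u}(U)\in\Gamma_{1}(n-1,q)$. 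Conversely, given $X\in\Gamma_{1}(n-1,q)$, Corollary~\ref{cor.gammaresidue} produces a unique $U$ incident to $u$ with $r_{u}(U)=X$ and $\dim U=\dim X+1$; if $\dim X=n-1$ then $\dim U=n$, and if $\dim X<n-1$ then $\mathrm{disc}(X)=1$ forces $\mathrm{disc}(U)=\epsilon$ with $\dim U<n$, so in either case $U\in\Gamma_{\epsilon}(n,q)$. Thus $r_{u}$ restricts to a bijection $\mathrm{res}_{\Gamma_{\epsilon}(n,q)}(u)\to\Gamma_{1}(n-1,q)$, and since this restriction inherits preservation of incidence and, up to the standard shift of the type set by one, of type from the isomorphism of Corollary~\ref{cor.gammaresidue}, it is an isomorphism of pregeometries.

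The main thing to get right is the discriminant accounting, in particular the pleasant cancellation that the twist by $\mathrm{disc}(\langle u\rangle)=\epsilon$ turns both $\epsilon=1$ and $\epsilon=-1$ into square type --- this is exactly why the residue is $\Gamma_{1}(n-1,q)$ and not $\Gamma_{\epsilon}(n-1,q)$. I would also be careful to read $\Gamma_{1}(n-1,q)$ here as the $\Gamma_{1}$-geometry of the restricted flip $\varphi|_{W}$, whose similarity type among $\sigma$-semilinear flips may be the other one, according to the remark following Lemma~\ref{lem.restrictflip}; this does not affect the argument, since the definition of $\Gamma_{1}$ only involves the square/non-square dichotomy of $\beta_{\varphi|_{W}}$.
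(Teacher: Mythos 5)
Your argument is correct and is essentially the paper's proof: the paper likewise deduces the isomorphism from (the proof of) Corollary~\ref{cor.gammaresidue} together with the discriminant bookkeeping of Lemma~\ref{lem.splitdisc}, noting that $r_{u}$ preserves types when $\mathrm{disc}(\langle u\rangle)=1$ and swaps them when $\mathrm{disc}(\langle u\rangle)=-1$. Your write-up just makes explicit the two inclusions and the treatment of maximal objects that the paper leaves implicit.
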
  

\begin{proof}  This follows immediately from the proof of Corollary \ref{cor.gammaresidue} once we note that if $u\in\Gamma_{1}(n,q)$  then by Lemma \ref{lem.splitdisc}, $r_{u}$ sends square type subspaces to square type subspaces, and non-square type subspaces to non-square type subspaces.  Similarly if $u\in\Gamma_{-1}(n,q)$ then $r_{u}$ sends square type subspaces to non-square type subspaces, and non-square type subspaces to square type subspaces.
\end{proof}

\begin{thm}\label{thm.gamma1geometry}  $\Gamma_{1}(n,q)$ and $\Gamma_{-1}(n,q)$ are a geometries with type and incidence inherited from $\Gamma(n,q)$.
\end{thm}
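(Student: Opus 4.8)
The plan is to mirror the proof of Theorem \ref{thm.biggeometry} almost verbatim, replacing its two structural inputs by their $\Gamma_{\epsilon}$-analogues: Corollary \ref{cor.pointobj1} plays the role of Lemma \ref{lem.gammapointexist}, and Lemma \ref{lem.gamma1pointresidue} plays the role of Corollary \ref{cor.gammaresidue}. I would induct on $n$, handling $\Gamma_{1}(n,q)$ and $\Gamma_{-1}(n,q)$ simultaneously. For the base case $n=1$ every nonzero object has dimension $1=n$, so $\Gamma_{1}(1,q)=\Gamma_{-1}(1,q)=\Gamma(1,q)$, which is a rank-$1$ pregeometry whose maximal flags are single points, hence transversal (this also follows from the base case of Theorem \ref{thm.biggeometry}).

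For the inductive step, fix $\epsilon\in\{1,-1\}$ and let $\mathcal{F}$ be a maximal flag of $\Gamma_{\epsilon}(n,q)$. The first step is to show that the member $u$ of $\mathcal{F}$ of smallest dimension is a point of $\Gamma_{\epsilon}(n,q)$. Since the members of $\mathcal{F}$ are pairwise incident and $u$ has minimal dimension, every member of $\mathcal{F}$ contains $u$. By Corollary \ref{cor.pointobj1}, $u$ contains a point $p$ of $\Gamma_{\epsilon}(n,q)$; if $\dim u>1$, then $p$ is incident to every member of $\mathcal{F}$ and is not itself a member of $\mathcal{F}$, so $\mathcal{F}\cup\{p\}$ is a strictly larger flag, contradicting maximality. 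Hence $\dim u=1$.

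Now $\mathcal{F}\setminus\{u\}$ is a flag of $\mathrm{res}_{\Gamma_{\epsilon}(n,q)}(u)$, and by maximality of $\mathcal{F}$ it is a maximal flag there, since any object of the residue extending it would extend $\mathcal{F}$ inside $\Gamma_{\epsilon}(n,q)$. By Lemma \ref{lem.gamma1pointresidue} we have $\mathrm{res}_{\Gamma_{\epsilon}(n,q)}(u)\cong\Gamma_{1}(n-1,q)$, which by the inductive hypothesis is a geometry, so $\mathcal{F}\setminus\{u\}$ corresponds to a chamber of $\Gamma_{1}(n-1,q)$, i.e.\ a flag of type $\{1,\ldots,n-1\}$. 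Since the isomorphism $r_{u}$ lowers dimension by $1$, the set $\mathcal{F}\setminus\{u\}$ has type $\{2,\ldots,n\}$ in $\Gamma_{\epsilon}(n,q)$, and therefore $\mathcal{F}$ has type $\{1,\ldots,n\}$; that is, $\mathcal{F}$ is a chamber. Thus $\Gamma_{\epsilon}(n,q)$ is transversal, hence a geometry.

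The only mildly delicate points are the observation that the minimal member of a maximal flag must be a point—which is exactly where Corollary \ref{cor.pointobj1} is used—and tracking the type-set shift under $r_{u}$; the feature that makes a single induction close is that the point-residue is $\Gamma_{1}(n-1,q)$ regardless of whether $\epsilon=1$ or $\epsilon=-1$. No new computation is needed beyond what is already recorded in Lemma \ref{lem.splitdisc}, Corollary \ref{cor.pointobj1}, and Lemma \ref{lem.gamma1pointresidue}.
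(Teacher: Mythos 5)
Your proposal is correct and is essentially the paper's own proof: the paper proves this theorem by repeating the argument of Theorem \ref{thm.biggeometry} with Corollary \ref{cor.pointobj1} replacing Lemma \ref{lem.gammapointexist} and Lemma \ref{lem.gamma1pointresidue} replacing Corollary \ref{cor.gammaresidue}, exactly as you do. Your write-up simply spells out the induction (including the observation that the point-residue is $\Gamma_{1}(n-1,q)$ for both values of $\epsilon$, which is what lets the single induction close) in more detail than the paper does.
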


\begin{proof}  The same arguments as in the proof of Theorem \ref{thm.biggeometry} work in this case, with Lemma \ref{lem.gamma1pointresidue} replacing Corollary \ref{cor.gammaresidue} and Corollary \ref{cor.pointobj1} replacing Lemma \ref{lem.gammapointexist}.
\end{proof}

\begin{lem}\label{lem.2dvarphiinv}  Let $u$ be a point of $\Gamma(n,q)$ and let $U=\langle u,\varphi(u)\rangle$.  Then either every point of $\Gamma(n,q)$ contained in $U$ is of square type, or every point is of non-square type.
\end{lem}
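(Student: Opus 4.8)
The plan is to compute both forms explicitly on the $2$-dimensional space $U$ and read off the discriminant of every point it contains. By Lemma~\ref{lem.residuenondeg}, applied with $W=\langle u\rangle$, the space $U=\langle u,\varphi(u)\rangle$ is $2$-dimensional, $\varphi$-invariant and $\beta$-non-degenerate, so $\{u,\varphi(u)\}$ is a basis of $U$. Since $\varphi$ is a $\sigma$-semilinear isometry with $\varphi^{2}=\mathrm{id}$ and $u$ is $\beta$-isotropic, $\varphi(u)$ is $\beta$-isotropic as well; put $c=\beta(u,\varphi(u))=\beta_{\varphi}(u,u)$, which is nonzero because $\langle u\rangle$ is $\beta_{\varphi}$ non-degenerate. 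Recall from Lemma~\ref{lem.sigmaform} that $\beta_{\varphi}$ is symmetric bilinear, so for a point $\langle v\rangle$ the scalar $\beta_{\varphi}(v,v)$ is determined up to multiplication by a nonzero square; hence $\mathrm{disc}(\langle v\rangle)$ is the class of $\beta_{\varphi}(v,v)$ modulo squares in $\mathbb{F}^{*}$, and it suffices to show that $\beta_{\varphi}(v,v)$ lies in the square class of $c$ for every $\beta$-isotropic $v\in U$ with $\beta_{\varphi}(v,v)\neq 0$.

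Next I would do the (routine) computation. Writing $v=xu+y\varphi(u)$ with $x,y\in\mathbb{F}$ and using $\varphi(v)=\sigma(y)u+\sigma(x)\varphi(u)$, the $\sigma$-hermitian property of $\beta$ together with $\beta(u,u)=\beta(\varphi(u),\varphi(u))=0$ gives
$$
\beta(v,v)=\mathrm{Tr}_{\sigma}\bigl(x\,\sigma(y)\,c\bigr),
$$
while the symmetric bilinearity of $\beta_{\varphi}$, together with $\beta_{\varphi}(u,\varphi(u))=\beta(u,\varphi^{2}(u))=\beta(u,u)=0$, gives
$$
\beta_{\varphi}(v,v)=x^{2}c+y^{2}\sigma(c).
$$

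Finally I would invoke two elementary facts about $\mathbb{F}=\mathbb{F}_{q^{2}}$ with $q$ odd: (a) $c^{q-1}$ is a square in $\mathbb{F}^{*}$, since $\mathbb{F}^{*}$ is cyclic of order $q^{2}-1$ and $q-1$ is even, whence $\sigma(c)=c\cdot c^{q-1}$ has the square class of $c$; and (b) every $z\in\mathbb{F}_{q}^{*}$ is a square in $\mathbb{F}^{*}$, since $z^{(q^{2}-1)/2}=(z^{q-1})^{(q+1)/2}=1$. If $y=0$ then $\beta_{\varphi}(v,v)=x^{2}c$, and if $x=0$ then $\beta_{\varphi}(v,v)=y^{2}\sigma(c)$; in both cases we are done by (a). If $x,y$ are both nonzero, set $r=x/y$; then the isotropy relation $0=\beta(v,v)=N_{\sigma}(y)\,\mathrm{Tr}_{\sigma}(rc)$ forces $rc$ to be a trace-$0$ element, so $rc=a\alpha$ for a fixed nonzero trace-$0$ element $\alpha\in\mathbb{F}$ and some $a\in\mathbb{F}_{q}$. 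Since $\alpha^{q}=-\alpha$ we have $\alpha^{2}\in\mathbb{F}_{q}^{*}$, and substituting $r=a\alpha c^{-1}$ yields
$$
\beta_{\varphi}(v,v)=y^{2}\bigl(r^{2}c+\sigma(c)\bigr)=y^{2}c^{-1}\bigl(a^{2}\alpha^{2}+N_{\sigma}(c)\bigr).
$$
The factor $a^{2}\alpha^{2}+N_{\sigma}(c)$ lies in $\mathbb{F}_{q}$ and is nonzero since $\beta_{\varphi}(v,v)\neq 0$, so it is a square by (b); as $y^{2}$ is a square and $c^{-1}$ has the square class of $c$, we conclude that $\beta_{\varphi}(v,v)$ has the square class of $c$. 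Thus every point of $\Gamma(n,q)$ contained in $U$ has $\mathrm{disc}$ equal to the class of $c$, and in particular all such points are of the same type.

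The step that carries the weight is this last case, where the $\beta$-isotropy of $v$ must genuinely be used: without it one only gets $x^{2}c+y^{2}\sigma(c)=c\bigl(x^{2}+c^{q-1}y^{2}\bigr)$, i.e. $c$ times a sum of two squares, which need not be a square, so a priori points of both types could occur. The role of the isotropy relation is precisely to confine the cross coefficient $rc$ to the one-dimensional trace-$0$ line, which collapses $\beta_{\varphi}(v,v)$ — up to a square and the harmless factor $c^{-1}$ — to an element of the subfield $\mathbb{F}_{q}$, where every nonzero value is automatically a square in $\mathbb{F}_{q^{2}}$.
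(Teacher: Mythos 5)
Your proof is correct and follows essentially the same route as the paper's: parametrize the $\beta$-isotropic vectors of $U=\langle u,\varphi(u)\rangle$, compute $\beta_{\varphi}(v,v)$, and conclude by combining the facts that trace-zero elements of $\mathbb{F}$ have squares in $\mathbb{F}_{q}$ and that every element of $\mathbb{F}_{q}^{*}$ is a square in $\mathbb{F}_{q^{2}}$. If anything, your bookkeeping is a bit more careful than the paper's, which writes the remaining points as $u+\lambda\varphi(u)$ with $\mathrm{Tr}_{\sigma}(\lambda)=0$ and $Q_{\varphi}$-value $(1+\lambda^{2})Q_{\varphi}(u)$ — identities that as stated tacitly assume $c=\beta(u,\varphi(u))\in\mathbb{F}_{q}$ — whereas your computation with the factor $\sigma(c)=c\cdot c^{q-1}$ handles an arbitrary representative of the point $u$.
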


\begin{proof}  We first show that if $\lambda$ has trace 0 in $\mathbb{F}$ then $1+\lambda^{2}$ lies in $\mathbb{F}_{q}$ and so in particular is a square in $\mathbb{F}$.  Since $\mathrm{Tr}_{\sigma}(\lambda)=0$ it follows that $\sigma(\lambda)=-\lambda$ and so $\sigma(\lambda^{2})=\sigma(\lambda)^{2}=\lambda^{2}$.  Hence $\lambda^{2}\in\mathbb{F}_{q}$ and thus also $1+\lambda^{2}\in\mathbb{F}$.

If $u$ is a point in $U$, then all the other points of $\Gamma(n,q)$ that lie in $U$ (except for $\varphi(u)$) are of the form $u+\lambda\varphi(u)$ for some non-zero $\lambda$ of trace 0, and since these points have $Q_{\varphi}$ value $(1+\lambda^{2})Q_{\varphi}(u)$, we conclude that all these points have the same type as $u$.  Since also $\varphi(u)$ has the same $\beta_{\varphi}$ type as $u$ we conclude that all the points of $U$ have the same $\beta_{\varphi}$ type.
\end{proof}

\begin{thm}\label{thm.maxtype}  Let $M$ and $M'$ be $n$-objects of $\Gamma(n,q)$.  Then $M$ and $M'$ have the same $\beta_{\varphi}$ type.
\end{thm}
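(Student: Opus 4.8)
The plan is to induct on $n$, reduce the statement to connectivity of the collinearity graph of $\Gamma(n,q)$, and establish that connectivity by direct dimension counts. The base case $n=1$ is immediate: every point $u$ of $\Gamma(1,q)$ has $\langle u,\varphi(u)\rangle=V$ (as $\beta(u,\varphi(u))=\beta_{\varphi}(u,u)\neq 0$ while $\beta(u,u)=0$), so Lemma \ref{lem.2dvarphiinv} forces all points of $\Gamma(1,q)$, which are exactly its $1$-objects, to have the same $\beta_{\varphi}$ type. So assume $n\ge 2$ and that the theorem holds for every $\sigma$-semilinear flip of a unitary building of rank $n-1$.

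The crucial step is a local rigidity statement: \emph{if two $n$-objects $M$ and $M'$ of $\Gamma(n,q)$ contain a common point $u$, then $\mathrm{disc}(M)=\mathrm{disc}(M')$.} Indeed, $W:=\langle u\rangle^{\dperp}=\langle u,\varphi(u)\rangle^{\perp}$ is $\varphi$-invariant, non-degenerate, and $2(n-1)$-dimensional by Lemma \ref{lem.residuenondeg}, so by Lemma \ref{lem.restrictflip} the restriction $\varphi|_{W}$ is a $\sigma$-semilinear flip of the rank $n-1$ unitary building of $(W,\beta|_{W})$, and by Corollary \ref{cor.gammaresidue} the objects $r_{u}(M)=M\cap\langle u\rangle^{\dperp}$ and $r_{u}(M')$ are maximal objects of the geometry $\Gamma(n-1,q)$ induced by $\varphi|_{W}$. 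By the induction hypothesis these two $(n-1)$-objects have the same $\beta_{\varphi}$ type, and applying Lemma \ref{lem.splitdisc} to the chain $\langle u\rangle\subset M$ yields $\mathrm{disc}(M)=\mathrm{disc}(\langle u\rangle)\,\mathrm{disc}(r_{u}(M))$, and similarly for $M'$; the two right-hand sides coincide. Granting this, the theorem follows once the collinearity graph of $\Gamma(n,q)$ is known to be connected: for $n$-objects $M$ and $M'$, pick points $u\in M$ and $u'\in M'$ (Lemma \ref{lem.gammapointexist}) and a path $u=u_{0},u_{1},\dots,u_{m}=u'$ of pairwise collinear points of $\Gamma(n,q)$; since $\Gamma(n,q)$ is a geometry (Theorem \ref{thm.biggeometry}) each line $\langle u_{i},u_{i+1}\rangle$ extends to an $n$-object $N_{i}$, so $M,N_{0},N_{1},\dots,N_{m-1},M'$ is a chain of $n$-objects in which consecutive members share a point, and local rigidity gives $\mathrm{disc}(M)=\mathrm{disc}(N_{0})=\cdots=\mathrm{disc}(N_{m-1})=\mathrm{disc}(M')$.

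The remaining and principal obstacle is therefore the connectivity of the collinearity graph of $\Gamma(n,q)$. For points $u,v$ of $\Gamma(n,q)$ with $\beta(u,v)=0$: either $\langle u,v\rangle$ is a line of $\Gamma(n,q)$, so $u$ and $v$ are collinear, or $\langle u,v\rangle$ is $\beta_{\varphi}$-degenerate, in which case one enlarges it to a $\beta$-isotropic $\beta_{\varphi}$-non-degenerate subspace $L$ and links $u$ to $v$ through a point of $\Gamma(n,q)$ inside $L$, the existence of such a point following from a count of the few quadratic conditions to be avoided. For $\beta(u,v)\neq 0$: the space $\langle u,v\rangle^{\perp}$ is $\beta$-non-degenerate of dimension $2n-2$, and since the Witt index of $\beta_{\varphi}$ on $V$ is at most $n$, it is not $\beta_{\varphi}$-totally singular when $n\ge 3$; Lemma \ref{lem.evendimpoint} then produces a point of $\Gamma(n,q)$ in $\langle u,v\rangle^{\perp}$, which after a small adjustment is collinear with both $u$ and $v$. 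The genuinely delicate part is the low-rank case $n=2$, where $\langle u,v\rangle^{\perp}$ can be $\beta_{\varphi}$-totally singular, so the path must be routed differently; this I would settle by an explicit computation with the pair of forms $\beta$ and $\beta_{\varphi}$ on the $4$-dimensional space $V$.
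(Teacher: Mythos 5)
Your reduction is sound as far as it goes: the base case $n=1$ via Lemma \ref{lem.2dvarphiinv}, and the ``local rigidity'' step --- passing to the residue of a common point $u$ with Lemma \ref{lem.restrictflip} and Corollary \ref{cor.gammaresidue}, invoking the inductive hypothesis for the restricted flip, and transferring the conclusion back through Lemma \ref{lem.splitdisc} --- are all correct, as is the chaining of $n$-objects along a path in the collinearity graph using transversality (Theorem \ref{thm.biggeometry}). But this reduction shifts the entire content of the theorem onto the connectedness of the collinearity graph of $\Gamma(n,q)$, and that is exactly what you do not prove. For $\beta(u,v)\neq 0$ you get a point $w\in\langle u,v\rangle^{\perp}$ from Lemma \ref{lem.evendimpoint} (and only for $n\geq 3$), but $\langle u,w\rangle$ need not be $\beta_{\varphi}$-non-degenerate, and the ``small adjustment'' that is supposed to fix this is never exhibited; the case $\beta(u,v)=0$ with $\langle u,v\rangle$ degenerate rests on an unperformed ``count of the few quadratic conditions''; and the case $n=2$, which lies squarely within the paper's hypotheses $n\geq 2$, is explicitly left as an unexecuted computation. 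Since connectivity of such flip geometries is precisely the sort of statement that is delicate in low rank and for small $q$, this is a genuine gap, not a routine verification: as written, the argument proves the theorem only conditionally on an unestablished (and possibly $q$-sensitive) connectivity property.

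For comparison, the paper's proof is entirely different and bypasses connectivity and induction on residues. Assuming $M$ is of square type and $M'$ of non-square type, it uses Corollary \ref{cor.bigbasis} to build from $M$ and $M'$ two $\beta$-hyperbolic bases of $V$ adapted to $\beta_{\varphi}$, and compares the transition matrix $T$ between them: the $\beta_{\varphi}$-Gram matrices force $N_{\sigma}(\det T)=-1$, while the equality of the $\beta$-Gram matrices forces $N_{\sigma}(\det T)=1$, a contradiction. That computation is uniform in $n\geq 2$, needs no low-rank analysis, and moreover produces the explicit normalized bases that are then reused in the proof of Main Theorem 1B; if you want to salvage your approach, you would need to supply a complete connectivity proof (including $n=2$), which is substantially more work than the determinant argument it would replace.
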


\begin{proof}  If $n=1$ then this follows because $V=\langle u,\varphi(u)\rangle$ for some point $u$ of $\Gamma(1,q)$.  Since every point on $\langle u,\varphi(u)\rangle$ has the same $\beta_{\varphi}$ type as $u$ (by Lemma \ref{lem.2dvarphiinv}), when $n=1$ the maximal objects of $\Gamma(1,q)$ all have the same $\beta_{\varphi}$ type.

Assume $n>1$, suppose $M$ is of square type and suppose $M'$ is of non-square type.   Then by Corollary \ref{cor.bigbasis} $M$ has a basis $\{e_{1},\ldots,e_{n}\}$ of biorthogonal square type points.  We may then scale each $e_{i}$ so that $\beta_{\varphi}(e_{i},e_{i})=1$ for all $i$.  Setting $f_{i}=\varphi(e_{i})$ for $i=1,\ldots,n$ we obtain a $\beta$ hyperbolic basis $\mathcal{B}_{1}=\{e_{i},\varphi(e_{i})\}_{i=1}^{n}$ for $V$.

Similarly by Corollary \ref{cor.bigbasis} $M'$ has a basis $\{g_{1},\ldots,g_{n}\}$ of biorthogonal points where $g_{1},\ldots,g_{n-1}$ are of square type and $g_{n}$ is of non-square type.  After scaling we can assume that $\beta_{\varphi}(g_{i},g_{i})=1$ for $i=1,\ldots,n-1$, and $\beta_{\varphi}(g_{n},g_{n})=\alpha$ where $\alpha$ is a non-square in $\mathbb{F}$.  Let $h_{n}=\alpha^{-1}g_{n}$.  Notice that $\varphi(h_{n})=\sigma(\alpha^{-1})\varphi(g_{n})$ and so $\varphi(g_{n})=\sigma(\alpha)\varphi(h_{n})$.  It is easy to see that $(h_{n},\varphi(g_{n}))$ forms a $\beta$ hyperbolic pair, and that $\beta_{\varphi}(h_{n},h_{n})=\alpha^{-1}$, and $\beta_{\varphi}(\varphi(g_{n}),\varphi(g_{n}))=\sigma(\alpha)$.  This gives another $\beta$ hyperbolic basis, $\mathcal{B}_{2}=\{g_{1},\varphi(g_{1}),\ldots,g_{n-1},\varphi(g_{n-1}),h_{n},\sigma(\alpha)\varphi(h_{n})\}$.

Let $T$ be the transition matrix from $\mathcal{B}_{1}$ to $\mathcal{B}_{2}$.  For $i=1,2$ let $B_{i}$ denote the Gram matrix of $\beta$ with respect to $\mathcal{B}_{i}$, and let $C_{i}$ denote the Gram matrix of $\beta_{\varphi}$ with respect to $\mathcal{B}_{i}$.  Since $T$ is the transition matrix from $\mathcal{B}_{1}$ to $\mathcal{B}_{2}$ it follows that
\begin{eqnarray}
TB_{1}\sigma(T^{t})&=&B_{2} \\
TC_{1}T^{t}&=&C_{2}.
\end{eqnarray}
Since $C_{1}=I_{2n}$ it follows from (8) that $TT^{t}=C_{2}$.  It is easy to check that $\det(C_{2})=\alpha^{q-1}$ and so also $\det(T)^{2}=\alpha^{q-1}$.  Hence $\det(T)=\pm\alpha^{(q-1)/2}$ and it is easy to check that this implies $N_{\sigma}(\det(T))=-1$.

On the other hand, since both $\mathcal{B}_{1}$ and $\mathcal{B}_{2}$ are $\beta$ hyperbolic bases, we see that $B_{1}=B_{2}$, which combined with (7) forces $N_{\sigma}(\det(T))=1$, a contradiction.  Hence $M$ and $M'$ have the same $\beta_{\varphi}$ type.
\end{proof}

\begin{defn}  A $\sigma$-semilinear flip $\varphi$ is of \textbf{square type} if the maximal objects of $\Gamma(n,q)$ are of square $\beta_{\varphi}$ type.  A $\sigma$-semilinear flip $\varphi$ is of \textbf{non-square type} if the maximal objects of $\Gamma(n,q)$ are of non-square $\beta_{\varphi}$ type.
\end{defn}

\subsection{Classification of $\sigma$-Semilinear Flips of the Unitary Building}\label{sec.slinflips}

We are now in a position to fully classify the flips of the unitary building that are induced by $\sigma$-semilinear transformations of the underlying vector space. 

\begin{slinflips}\label{thm.classificationofflips}  Let $\varphi$ be a $\sigma$-semilinear flip on the unitary building.  Then there is a basis for $V$, $\{e_{i},f_{i}\}_{i=1}^{n}$ of $\beta$ hyperbolic pairs so that for $i=1,\ldots,n-1$ we have $\varphi(e_{i})=f_{i}$, $\varphi(f_{i})=e_{i}$ and either
\begin{enumerate}
\item[(i)]  $\varphi(e_{n})=f_{n}$, $\varphi(f_{n})=e_{n}$ or
\item[(ii)]  $\varphi(e_{n})=\lambda f_{n}$, $\varphi(f_{n})=\sigma(\lambda^{-1})e_{n}$ where $\lambda$ is a non-square in $\mathbb{F}$.
\end{enumerate}
Case (i) occurs if $\varphi$ is of square type, and Case (ii) occurs if $\varphi$ is of non-square type.  Conversely any $\sigma$-semilinear transformation of $V$ that satisfies the conclusion of this theorem induces a flip of $\Delta$.
\end{slinflips}

\begin{proof}  The forward implication follows immediately from the proof of Theorem \ref{thm.maxtype}.  That is, the proof of Theorem \ref{thm.maxtype} shows that if $\varphi$ is of square type then there is a basis as described by (i), and if $\varphi$ is of non-square type then there is a basis as described in (ii).

The converse follows from Lemma \ref{lem.converse}.
\end{proof}

This completes the proof of Main Theorem 1.
It is worth noting that the geometry of $\beta$ isotropic $\beta_{\varphi}$ square-type subspaces induced by square-type and non-square type flips are not isomorphic, as can be seen from the fact that square-type and non-square type spaces contain different numbers of square-type points.

\subsection{A Flag Transitive Automorphism Group of $\Gamma_{1}(n,q)$}\label{sec.slinflag}

We now study the group of linear transformations of $V$ that preserve both $\beta$ and $\beta_{\varphi}$.  This group acts as an automorphism group of the geometry $\Gamma(n,q)$ although it does not act flag transitively on that geometry.  We prove in this section that this group acts flag transitively on both $\Gamma_{1}(n,q)$ and $\Gamma_{-1}(n,q)$.

\begin{defn}  Let $\mathrm{U}_{2n}(q^{2})^{\varphi}=\{f\in\mathrm{U}_{2n}(q^{2})|\beta_{\varphi}(u,v)=\beta_{\varphi}(f(u),f(v))\mbox{ for all }u,v\in V\}$.  Notice that this is the same notation we used in the case of a linear flip.
\end{defn}

\begin{lem}\label{lem.semicommutator}  $\mathrm{U}_{2n}(q^{2})^{\varphi}=C_{\Gamma\mathrm{U}(V)}(\varphi)\cap\mathrm{U}_{2n}(q^{2})$.
\end{lem}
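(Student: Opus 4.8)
The plan is to observe that this is the exact $\sigma$-semilinear analogue of Lemma~\ref{lem.lcommutator}, and that the proof given there carries over with no change: the only properties of $f\in\mathrm{U}_{2n}(q^{2})$ used are that $f$ is \emph{linear} and that $f$ is an isometry of $\beta$, so whether $\varphi$ itself is linear or $\sigma$-semilinear is irrelevant. I would prove the two inclusions separately.

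For $\mathrm{U}_{2n}(q^{2})^{\varphi}\subseteq C_{\Gamma\mathrm{U}(V)}(\varphi)\cap\mathrm{U}_{2n}(q^{2})$: take $f\in\mathrm{U}_{2n}(q^{2})^{\varphi}$ and $v\in V$; I want $f(\varphi(v))=\varphi(f(v))$, and since $\beta$ is non-degenerate it suffices to check $\beta(w,f(\varphi(v)))=\beta(w,\varphi(f(v)))$ for every $w\in V$. Writing $w=f(x)$ (possible as $f$ is bijective), I compute $\beta(w,f(\varphi(v)))=\beta(f(x),f(\varphi(v)))=\beta(x,\varphi(v))=\beta_{\varphi}(x,v)$ using that $f$ preserves $\beta$, and $\beta(w,\varphi(f(v)))=\beta(f(x),\varphi(f(v)))=\beta_{\varphi}(f(x),f(v))=\beta_{\varphi}(x,v)$ using that $f$ preserves $\beta_{\varphi}$. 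Comparing the two gives the desired equality, and since $f\in\mathrm{U}_{2n}(q^{2})\le\Gamma\mathrm{U}(V)$ we conclude $f\in C_{\Gamma\mathrm{U}(V)}(\varphi)\cap\mathrm{U}_{2n}(q^{2})$.

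For the reverse inclusion: if $f\in\mathrm{U}_{2n}(q^{2})$ commutes with $\varphi$, then for all $u,v\in V$ one has $\beta_{\varphi}(f(u),f(v))=\beta(f(u),\varphi(f(v)))=\beta(f(u),f(\varphi(v)))=\beta(u,\varphi(v))=\beta_{\varphi}(u,v)$, so $f\in\mathrm{U}_{2n}(q^{2})^{\varphi}$. (Here I use Lemma~\ref{lem.sigmaform}, which guarantees $\beta_{\varphi}$ is the bilinear form $\beta_{\varphi}(u,v)=\beta(u,\varphi(v))$, so the identities above are legitimate.)

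I do not expect any genuine obstacle here; the argument is purely formal. The only point worth double-checking is that the step $\beta(f(x),f(\varphi(v)))=\beta(x,\varphi(v))$ is valid, which it is precisely because it is $f$, not $\varphi$, that is being applied to both arguments, and $f$ is an honest linear $\beta$-isometry. Accordingly, one could even shorten the write-up to a single sentence citing the proof of Lemma~\ref{lem.lcommutator} and noting that semilinearity of $\varphi$ is never invoked in it; I would nevertheless include the short computation above for completeness.
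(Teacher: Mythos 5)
Your proof is correct and is exactly the paper's argument: the paper simply notes that the proof of Lemma~\ref{lem.lcommutator} carries over verbatim, which is what you observe (and you have reproduced that computation faithfully). Nothing further is needed.
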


\begin{proof}  The same proof as in Lemma \ref{lem.lcommutator} holds in this case.
\end{proof}

\begin{lem}\label{lem.seminicebase}  Let $C=(C_{i})_{i=1}^{n}$ be a chamber of $\Gamma_{1}(n,q)$.
\begin{enumerate}
\item[(a)]  If $\varphi$ is of square type then there is a basis $\{e_{i},f_{i}\}_{i=1}^{n}$ for $V$ with the following properties:
\begin{enumerate}
\item[(i)]  $\{e_{i},f_{i}\}$ is hyperbolic with respect to $\beta$;
\item[(ii)]  for all $i=1,\ldots,n$, $\varphi(e_{i})=f_{i}$ and $\varphi(f_{i})=e_{i}$; and
\item[(iii)]  for all $i=1,\ldots,n$, $C_{i}=\langle e_{1},\ldots,e_{i}\rangle$.
\end{enumerate}
\item[(b)]  If $\varphi$ is of non-square type then there is a basis $\{e_{i},f_{i}\}_{i=1}^{n}$ for $V$ with properties (i) and (iii) and
\begin{enumerate}
\item[(ii')]  For $i=1,\ldots,n-1$ $\varphi(e_{i})=f_{i}$ and $\varphi(f_{i})=e_{i}$, but $\varphi(e_{n})=\lambda f_{n}$ and $\varphi(f_{n})=\sigma(\lambda^{-1})e_{n}$ where $\lambda$ is any non-square in $\mathbb{F}$.
\end{enumerate}
\end{enumerate}
\end{lem}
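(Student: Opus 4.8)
The plan is to transport the construction used in Lemma~\ref{lem.linflipnicestbasis} to the $\sigma$-semilinear setting, producing the basis one $\beta$-hyperbolic pair at a time from the bottom of the chamber $C=(C_{i})_{i=1}^{n}$ while tracking $\beta_{\varphi}$-discriminants by means of Lemma~\ref{lem.splitdisc}. The arithmetic fact I shall use repeatedly is that, because $\varphi$ is $\sigma$-semilinear, $\beta_{\varphi}(\mu v,\mu v)=\mu^{2}\beta_{\varphi}(v,v)$ for every $v\in V$ and $\mu\in\mathbb{F}$; hence a point $v$ can be rescaled so that $\beta_{\varphi}(v,v)$ becomes any prescribed element of the square class of $\beta_{\varphi}(v,v)$, and in particular becomes $1$ whenever $v$ is of square type.

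First I would build biorthogonal points $e_{1},\dots,e_{n}$ with $\langle e_{1},\dots,e_{i}\rangle=C_{i}$ for all $i$. Choose a nonzero $e_{1}\in C_{1}$; since $C_{1}\in\Gamma_{1}(n,q)$ and $\dim C_{1}=1<n$, the point $e_{1}$ is of square type and we rescale it to $\beta_{\varphi}(e_{1},e_{1})=1$. Assuming $e_{1},\dots,e_{i-1}$ have been chosen, note that $C_{i-1}^{\dperp}\cap C_{i}$ equals $\langle C_{i-1},\varphi(C_{i-1})\rangle^{\perp}\cap C_{i}=C_{i-1}^{\perp_{\varphi}}\cap C_{i}$ by Lemma~\ref{lem.multiform2}, lies in $\Gamma(n,q)$ by Lemma~\ref{lem.complementingamma}, and ---being a $\beta_{\varphi}$-orthogonal complement of the non-degenerate space $C_{i-1}$ inside $C_{i}$--- is one-dimensional and disjoint from $C_{i-1}$. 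Pick a nonzero $e_{i}$ in it; then $e_{i}$ is biorthogonal to $e_{1},\dots,e_{i-1}$ and $\langle e_{1},\dots,e_{i}\rangle=C_{i}$. By Lemma~\ref{lem.splitdisc} and $\mathrm{disc}(C_{i-1})=1$ we get $\mathrm{disc}(\langle e_{i}\rangle)=\mathrm{disc}(C_{i})$. For $i<n$ this discriminant is $1$ (as $C_{i}\in\Gamma_{1}(n,q)$ has dimension $<n$), so we rescale $e_{i}$ to $\beta_{\varphi}(e_{i},e_{i})=1$. For $i=n$, Theorem~\ref{thm.maxtype} identifies $\mathrm{disc}(C_{n})$ with the type of $\varphi$: in case (a) it is $1$ and we again rescale to $\beta_{\varphi}(e_{n},e_{n})=1$; in case (b) it is $-1$, so $\beta_{\varphi}(e_{n},e_{n})$ and the prescribed non-square $\sigma(\lambda)$ are both non-squares, their ratio is a square, and we rescale $e_{n}$ to $\beta_{\varphi}(e_{n},e_{n})=\sigma(\lambda)$.

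Next I would put $f_{i}=\varphi(e_{i})$ for all $i$ in case (a), and $f_{i}=\varphi(e_{i})$ for $i<n$ together with $f_{n}=\lambda^{-1}\varphi(e_{n})$ in case (b). Since $C_{n}\in\Gamma(n,q)$ has dimension $n$, Lemma~\ref{lem.residuenondeg} shows $\{e_{i},\varphi(e_{i})\}_{i=1}^{n}$ is a basis of $V$, hence so is $\{e_{i},f_{i}\}_{i=1}^{n}$. Property (iii) is immediate. For (i): the $e_{i}$ are $\beta$-isotropic and mutually $\beta$-orthogonal, and because $\varphi$ is a $\sigma$-semilinear isometry $\beta(\varphi(u),\varphi(v))=\sigma(\beta(u,v))$, which gives $\beta(f_{i},f_{j})=0$ for $i\neq j$; similarly $\beta(e_{i},f_{j})$ is a scalar multiple of $\beta_{\varphi}(e_{i},e_{j})$, so it vanishes for $i\neq j$; and the chosen normalisations give $\beta(e_{i},f_{i})=1$ (in case (b) for $i=n$ this reads $\sigma(\lambda^{-1})\beta_{\varphi}(e_{n},e_{n})=\sigma(\lambda^{-1})\sigma(\lambda)=1$). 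Finally (ii), resp.\ (ii'), follows by applying $\varphi$ and using $\varphi^{2}=\mathrm{id}$: for the relevant $i$, $\varphi(f_{i})=\varphi^{2}(e_{i})=e_{i}$, while in case (b) one has $\varphi(e_{n})=\lambda f_{n}$ and $\varphi(f_{n})=\varphi(\lambda^{-1}\varphi(e_{n}))=\sigma(\lambda^{-1})e_{n}$.

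The one place that needs genuine care is the final rescaling in case (b): one must verify that the target value $\sigma(\lambda)$ does lie in the square class of $\beta_{\varphi}(e_{n},e_{n})$ ---both are non-squares and $\sigma$ preserves square classes--- and that the choice $f_{n}=\lambda^{-1}\varphi(e_{n})$ simultaneously yields $\beta(e_{n},f_{n})=1$ and the relations $\varphi(e_{n})=\lambda f_{n}$, $\varphi(f_{n})=\sigma(\lambda^{-1})e_{n}$ for the arbitrarily prescribed non-square $\lambda$. Everything else is the bookkeeping of Lemma~\ref{lem.linflipnicestbasis} carried out in the discriminant-graded, $\sigma$-semilinear situation, and presents no real difficulty.
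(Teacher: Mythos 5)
Your proposal is correct and follows essentially the same route as the paper: the paper's proof simply repeats the inductive construction of Lemma \ref{lem.linflipnicestbasis} (choosing $e_{i}\in C_{i-1}^{\dperp}\cap C_{i}$) with the scaling arguments from the proof of Theorem \ref{thm.maxtype}, which is exactly what you carry out, with the discriminant bookkeeping via Lemma \ref{lem.splitdisc} made explicit and the final rescaling $\beta_{\varphi}(e_{n},e_{n})=\sigma(\lambda)$, $f_{n}=\lambda^{-1}\varphi(e_{n})$ handling the prescribed non-square $\lambda$.
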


\begin{proof}  The same arguments as in the proof of Lemma \ref{lem.linflipnicestbasis} works with Corollary \ref{cor.pointobj1} replacing Lemma \ref{lem.gammapointexist} and the scaling arguments from the proof of Theorem \ref{thm.maxtype} replacing the scaling arguments from the proof of Main Theorem 1A.
\end{proof}

Lemma \ref{lem.seminicebase} is strictly stronger than Main Theorem 1B.  In Main Theorem 1B we only prove that, given a maximal object we can find a basis satisfying (i) and (ii) (resp. (ii')), what is interesting about Lemma \ref{lem.seminicebase} is that given any chamber of $\Delta^{\varphi}$ we can additionally require that (iii) be satisfied.

\begin{slft}\label{thm.flagtransitive1}  $\mathrm{U}_{2n}(q^{2})^{\varphi}$ acts flag transitively on $\Gamma_{1}(n,q)$.
\end{slft}

\begin{proof}  The same proof as in Main Theorem 2 applies here, with Lemma \ref{lem.seminicebase} replacing Lemma \ref{lem.linflipnicestbasis}, but note that we must choose the $\lambda$'s to be the same if $\varphi$ is a non-square type flip.
\end{proof}

The proof that $\mathrm{U}_{2n}(q^{2})^{\varphi}$ acts flag transitively on $\Gamma_{-1}(n,q)$ is similar.

We now turn to the problem of determining the isomorphism type of $\mathrm{U}_{2n}(q^{2})^{\varphi}$ when $\varphi$ is a $\sigma$-semilinear flip of $\Delta$.  The following lemma is well known.

\begin{lem}\label{lem.determinetype}  Let $(W,\rho)$ be a non-degenerate orthogonal space over a field $k$.  Let $U$ be a 2-dimensional non-degenerate subspace of $W$.
\begin{enumerate}
\item[(i)]  If $-1$ is a square in $k$, then $U$ is of $+$ type if and only if $U$ is of square type.
\item[(ii)]  If $-1$ is not a square in $k$, then $U$ is of $+$ type if and only if $U$ is of non-square type.
\end{enumerate}
\end{lem}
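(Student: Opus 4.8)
The plan is to reduce the statement to the classical criterion for when a binary quadratic form over a field of characteristic not $2$ is isotropic, and then compare discriminants. (Implicitly $\mathrm{char}\,k\neq 2$, which holds in the only application, where $k=\mathbb{F}$.)

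First I would recall that, by definition, a $2$-dimensional non-degenerate orthogonal space $U$ is of $+$ type exactly when it is a hyperbolic plane, i.e.\ exactly when it contains a nonzero isotropic vector, and of $-$ type (anisotropic) otherwise. Since $\mathrm{char}\,k\neq 2$, the space $U$ has an orthogonal basis $\mathcal{U}=\{u_1,u_2\}$, so that $\rho(\mathcal{U})=\mathrm{diag}(a,b)$ with $a=\rho(u_1,u_1)\neq 0$ and $b=\rho(u_2,u_2)\neq 0$, and $\det\rho(\mathcal{U})=ab$. Hence $U$ is of square type iff $ab$ is a square in $k$ and of non-square type iff $ab$ is a non-square; this does not depend on the chosen basis, since any change of basis multiplies the determinant by a nonzero square, so the square/non-square type is genuinely an invariant of $U$.

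Next I would invoke the standard fact that the form $ax_1^2+bx_2^2$ has a nontrivial zero iff $-ab$ is a square in $k$: if $ax_1^2+bx_2^2=0$ with $(x_1,x_2)\neq(0,0)$ then necessarily $x_1x_2\neq 0$ and $-ab=(ax_1/x_2)^2$, while conversely if $-ab=c^2$ then $(c,a)$ is isotropic. Therefore $U$ is of $+$ type iff $-ab$ is a square in $k$, equivalently iff $ab$ lies in $-(k^{\times})^2$, the set of $-1$ times squares.

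Finally I would combine the two descriptions. If $-1$ is a square in $k$, then $-(k^{\times})^2=(k^{\times})^2$, so $U$ is of $+$ type iff $ab$ is a square iff $U$ is of square type, giving (i). If $-1$ is not a square, then $-(k^{\times})^2$ is exactly the set of non-squares of $k$, so $U$ is of $+$ type iff $ab$ is a non-square iff $U$ is of non-square type, giving (ii). I do not expect any genuine obstacle here; the only points requiring care are recalling the isotropy criterion for binary forms and noting that the discriminant is well defined modulo squares.
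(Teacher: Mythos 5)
Your argument is correct and is the standard one; the paper in fact offers no proof at all, simply citing the lemma as well known, so there is nothing to compare beyond noting that your route (orthogonal diagonalization, the isotropy criterion ``$ax_1^2+bx_2^2$ isotropic iff $-ab$ is a square,'' and the fact that a $2$-dimensional non-degenerate isotropic space is hyperbolic) is exactly the expected one. One caveat deserves flagging: in case (ii) you assert that $-(k^{\times})^{2}$ is precisely the set of non-squares of $k$, which requires $k^{\times}/(k^{\times})^{2}$ to have order at most $2$; this holds for finite fields of odd characteristic (the only case in which the paper uses the lemma) but not for an arbitrary field, and indeed the lemma as stated fails in general (over $k=\mathbb{Q}$ the form $x^{2}+2y^{2}$ is anisotropic yet of non-square type). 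So the gap lies in the statement's spurious generality rather than in your argument; it would be worth adding the sentence that $k$ is assumed to have exactly two square classes, after which your proof is complete.
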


\begin{thm}\label{thm.grameqbases}  There is a basis for $V$ relative to which the Gram-matrices of $\beta$ and $\beta_{\varphi}$ coincide.
\end{thm}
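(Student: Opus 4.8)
The plan is to reformulate the statement so that it becomes a question about vectors fixed by $\varphi$, and then to exhibit a basis of such vectors.

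First I would note that for any $\mathbb{F}$-basis $\mathcal{B}=\{v_{1},\dots,v_{2n}\}$ of $V$, the Gram matrices of $\beta$ and $\beta_{\varphi}$ with respect to $\mathcal{B}$ have $(i,j)$-entries $\beta(v_{i},v_{j})$ and $\beta_{\varphi}(v_{i},v_{j})=\beta(v_{i},\varphi(v_{j}))$ respectively. Hence the two matrices coincide if and only if $\beta(v_{i},\varphi(v_{j})-v_{j})=0$ for all $i,j$, and since $\mathcal{B}$ is a basis and $\beta$ is non-degenerate this is equivalent to $\varphi(v_{j})=v_{j}$ for every $j$. So the theorem reduces to the claim that $V$ admits an $\mathbb{F}$-basis of $\varphi$-fixed vectors. (Consistency check: the resulting common Gram matrix is then symmetric with entries in $\mathbb{F}_{q}$, since $\beta_{\varphi}$ is symmetric by Lemma~\ref{lem.sigmaform} and $\beta$ is $\sigma$-hermitian.)

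Second, I would establish that the $\mathbb{F}_{q}$-subspace $W=\{v\in V:\varphi(v)=v\}$ is an $\mathbb{F}_{q}$-form of $V$, i.e.\ that $\dim_{\mathbb{F}_{q}}W=2n$ and every $\mathbb{F}_{q}$-basis of $W$ is an $\mathbb{F}$-basis of $V$. This is the standard fact about a $\sigma$-semilinear involution (a form of Hilbert~90), but it can be proved directly: (a) $W$ spans $V$ over $\mathbb{F}$, because $\mu v+\sigma(\mu)\varphi(v)\in W$ for every $\mu\in\mathbb{F}$, and taking $\mu=1$ and $\mu=\zeta$ for a fixed $\zeta\in\mathbb{F}\setminus\mathbb{F}_{q}$ yields two members of $W$ from which $v$ is recovered as an $\mathbb{F}$-combination (the $2\times 2$ matrix with rows $(1,1)$ and $(\zeta,\sigma(\zeta))$ is invertible since $\zeta\notin\mathbb{F}_{q}$); and (b) a nontrivial $\mathbb{F}$-linear dependence among elements of $W$ of minimal length, normalised so that one coefficient equals $1$, becomes strictly shorter after applying $\varphi$ and subtracting, forcing all coefficients into $\mathbb{F}_{q}$, so $\mathbb{F}_{q}$-independent subsets of $W$ remain $\mathbb{F}$-independent. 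Combining (a) and (b) gives $\dim_{\mathbb{F}_{q}}W=2n$, and any $\mathbb{F}_{q}$-basis of $W$ is the basis demanded by the theorem.

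Alternatively, one can avoid invoking descent theory and write the basis down from Main Theorem~1B: with the $\beta$-hyperbolic basis $\{e_{i},f_{i}\}$ of that theorem, each $e_{i}+f_{i}$ is $\varphi$-fixed and, choosing a fixed nonzero trace-$0$ element $\alpha$ of $\mathbb{F}$, each $\alpha(e_{i}-f_{i})$ is $\varphi$-fixed as well (since $\sigma(\alpha)=-\alpha$ and $\varphi(e_{i}-f_{i})=-(e_{i}-f_{i})$, with the obvious modification of the last pair in case (ii) using $\lambda$); then $\{e_{i}+f_{i},\ \alpha(e_{i}-f_{i})\}_{i=1}^{n}$ is an $\mathbb{F}$-basis of $V$ because $q$ is odd and $\alpha\neq 0$. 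Either way, the only real content is the reformulation in the first step: once equality of the two Gram matrices is recognised as $\varphi$ acting as the identity on the chosen basis, no serious obstacle remains.
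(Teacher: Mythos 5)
Your proposal is correct, and your primary route is genuinely different from the paper's. The paper simply takes the hyperbolic basis $\{e_{i},f_{i}\}$ supplied by Main Theorem 1B and writes down the $\varphi$-fixed vectors $e_{i}+f_{i}$ and $\alpha(e_{i}-f_{i})$ (with the $\lambda$-adjustment in the non-square case), observing that since $\varphi$ fixes every vector of this basis the two Gram matrices agree entry by entry; your ``alternative'' at the end is precisely this argument. Your main argument instead reduces the statement to the existence of an $\mathbb{F}$-basis of $\varphi$-fixed vectors (correctly noting this is an equivalence, via non-degeneracy of $\beta$) and then produces such a basis by Galois descent: the fixed set $W$ spans $V$ because $\mu v+\sigma(\mu)\varphi(v)\in W$, and $\mathbb{F}_{q}$-independence in $W$ implies $\mathbb{F}$-independence by the minimal-relation argument. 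This uses only that $\varphi$ is $\sigma$-semilinear with $\varphi^{2}=\mathrm{id}$ on $V$, so it is independent of the classification in Main Theorem 1B and in fact only part (a) of your descent step is needed to extract the basis. What the paper's explicit construction buys, and what your descent argument does not by itself provide, is the concrete fixed basis and the resulting block-diagonal common Gram matrix recorded in Construction \ref{cons.suitablebasis}, which is then used in Theorem \ref{thm.flipgroup} to pin down the isomorphism type of $\mathrm{U}_{2n}(q^{2})^{\varphi}$; so for the purposes of the rest of the section the explicit version is the more useful one, but as a proof of the theorem as stated your argument is complete and somewhat more general.
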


\begin{proof}  Choose a basis for $V$ as provided by Main Theorem 1.  Then we have a basis $\{e_{i},f_{i}\}_{i=1}^{n}$ so that each $(e_{i},f_{i})$ is a $\beta$ hyperbolic pair, and for $i=1,\ldots,n-1$ we have
$$
\begin{array}{lcl}
\varphi(e_{i})=f_{i} & \mbox{ and } &   \varphi(e_{n})=\lambda f_{n} \\
\varphi(f_{i})=e_{i} &                       &    \varphi(f_{n})=\sigma(\lambda^{-1})e_{n}
\end{array}
$$

If $\varphi$ is a square type flip then we may assume that $\lambda=1$.  If $\varphi$ is a non-square type flip then $\lambda$ is a non-square in $\mathbb{F}$.

Choose $a\in\mathbb{F}_{q}$ that is not a square (in $\mathbb{F}_{q}$) and choose $\alpha\in\mathbb{F}$ so that $\alpha^{2}=a$.  Notice that $\sigma(\alpha)=-\alpha$.  Define a new basis as follows.  For $i=1,\ldots,n-1$ set
\begin{eqnarray*}
g_{i}&=&e_{i}+f_{i} \\
g_{i+n}&=& \alpha(e_{i}-f_{i}) \\
g_{n}&=& e_{n}+\lambda f_{n} \\
g_{2n}&=& \alpha(e_{n}-\lambda f_{n}) \\
\end{eqnarray*}
It is easy to check that each vector $g_{i}$, $i=1,\ldots,2n$,  is fixed by $\varphi$, and so since $\varphi$ acts trivially on the basis $\{g_{i}|i=1,\ldots,2n\}$ we conclude the Gram matrices of $\beta$ and $\beta_{\varphi}$ agree with respect to this basis.
\end{proof}

\begin{cons}\label{cons.suitablebasis}   For convenience, we reorder the basis $\{g_{i}|i=1,\ldots,2n\}$ from the proof of Theorem \ref{thm.grameqbases} as follows:  for $i=1,\ldots,n$ set
\begin{eqnarray*}
h_{2i-1}&=&g_{i}\\
h_{2i}&=&g_{i+n}.
\end{eqnarray*}

With respect to the basis $\{h_{i}\}_{i=1}^{2n}$, the common Gram matrix of $\beta$ and $\beta_{\varphi}$ is a block diagonal matrix with $2\times 2$ blocks $\{M_{i}\}_{i=1}^{n}$.  If $\varphi$ is of square type then for $i=1,\ldots,n$ we have
$$
M_{i}=\left(
        \begin{array}{cc}
          2 & 0 \\
          0 & 2a \\
        \end{array}
      \right).
$$
If $\varphi$ is of non-square type then for $i=1,\ldots,n-1$ we have the same matrix $M_{i}$ as above and
$$
M_{n}=\left(
      \begin{array}{cc}
         \mathrm{Tr}_{\sigma}(\lambda) & \alpha(\sigma(\lambda)-\lambda) \\
         \alpha(\sigma(\lambda)-\lambda) & a\mathrm{Tr}_{\sigma}(\lambda) \\
      \end{array}
    \right).
$$
\end{cons}

\begin{lem}\label{lem.nsqnorm}  Suppose $\gamma$ is a non-square in $\mathbb{F}$.  Then $N_{\sigma}(\gamma)$ is a non-square in $\mathbb{F}_{q}$.
\end{lem}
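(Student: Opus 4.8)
The plan is a short Euler-criterion computation, transferring a Legendre-symbol identity from $\mathbb{F}_{q}$ up to $\mathbb{F}=\mathbb{F}_{q^{2}}$ via the bookkeeping identity $(q+1)\cdot\tfrac{q-1}{2}=\tfrac{q^{2}-1}{2}$.

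First I would record the two standard facts I need, both of which hold because $q$ is odd. Since $\mathbb{F}^{*}$ is cyclic of even order $q^{2}-1$, an element $x\in\mathbb{F}^{*}$ is a non-square in $\mathbb{F}$ exactly when $x^{(q^{2}-1)/2}=-1$: indeed $x^{(q^{2}-1)/2}$ is a square root of $1$, hence equals $\pm 1$, and it equals $1$ precisely when $x$ lies in the index-$2$ subgroup of squares. Likewise, an element $y\in\mathbb{F}_{q}^{*}$ is a non-square in $\mathbb{F}_{q}$ exactly when $y^{(q-1)/2}=-1$.

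Next I would compute, using $N_{\sigma}(\gamma)=\gamma\,\sigma(\gamma)=\gamma\cdot\gamma^{q}=\gamma^{q+1}$, that
$$
N_{\sigma}(\gamma)^{(q-1)/2}=\gamma^{(q+1)(q-1)/2}=\gamma^{(q^{2}-1)/2}.
$$
If $\gamma$ is a non-square in $\mathbb{F}$, the first fact gives $\gamma^{(q^{2}-1)/2}=-1$, hence $N_{\sigma}(\gamma)^{(q-1)/2}=-1$; since $N_{\sigma}(\gamma)\in\mathbb{F}_{q}^{*}$, the second fact then forces $N_{\sigma}(\gamma)$ to be a non-square in $\mathbb{F}_{q}$, as claimed.

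This argument has no substantive obstacle; the only care needed is the exponent bookkeeping that lets the computation of the quadratic character be performed in $\mathbb{F}$ and read off in $\mathbb{F}_{q}$. (The same display in fact yields the converse implication too, although only the stated direction is needed in the sequel.)
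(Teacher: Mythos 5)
Your argument is correct: since $\sigma$ is the $q$th power map, $N_{\sigma}(\gamma)=\gamma^{q+1}$, and the identity $N_{\sigma}(\gamma)^{(q-1)/2}=\gamma^{(q^{2}-1)/2}$ together with Euler's criterion in the two cyclic groups $\mathbb{F}^{*}$ and $\mathbb{F}_{q}^{*}$ (both of even order, $q$ being odd) gives the claim, with $N_{\sigma}(\gamma)\in\mathbb{F}_{q}^{*}$ guaranteed because the norm lands in the fixed field and $\gamma\neq 0$. The route differs from the paper's: there, multiplicativity of $N_{\sigma}$ is used to reduce to the case where $\gamma$ is a generator $\alpha$ of $\mathbb{F}^{*}$, and one then checks that the square roots $\pm\alpha^{(q+1)/2}$ of $N_{\sigma}(\alpha)$ in $\mathbb{F}$ are moved by $\sigma$ and hence do not lie in $\mathbb{F}_{q}$. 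Your version avoids the reduction step and the discussion of square roots altogether, computing the quadratic character directly and uniformly for every non-square; as you note, the same display also yields the converse (the norm of a square of $\mathbb{F}$ is a square of $\mathbb{F}_{q}$), which the paper's reduction obtains only implicitly through multiplicativity. Both arguments ultimately rest on the same exponent fact $\gamma^{(q^{2}-1)/2}=-1$, so the difference is one of packaging, but yours is the cleaner and slightly more general bookkeeping.
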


\begin{proof}  Since the norm map $N_{\sigma}$ is multiplicative it suffices to show that if $\alpha$ is a generator of $\mathbb{F}^{\ast}$ then $N_{\sigma}(\alpha)$ is a non-square in $\mathbb{F}_{q}$.  This follows from a straightforward calculation which shows that the square roots of $N_{\sigma}(\alpha)$ in $\mathbb{F}$ are not fixed by $\sigma$, and so do not lie in $\mathbb{F}_{q}$.
\end{proof}

\begin{thm}\label{thm.flipgroup}  Let $\varphi$ be a $\sigma$-semilinear flip of $\Delta$.
\begin{enumerate}
\item[(1)]  Suppose $\varphi$ is a square type flip.
\begin{enumerate}
\item[(i)]  If $n$ is even or $-1$ is not a square in $\mathbb{F}_{q}$ then $\mathrm{U}_{2n}(q^{2})^{\varphi}\cong\mathrm{O}_{2n}^{+}(q)$.
\item[(ii)]  If $n$ is odd and $-1$ is a square in $\mathbb{F}_{q}$ then $\mathrm{U}_{2n}(q^{2})^{\varphi}\cong\mathrm{O}_{2n}^{-}(q)$.
\end{enumerate}
\item[(2)]  Suppose $\varphi$ is a non-square type flip.
\begin{enumerate}
\item[(i)]  If $n$ is even or $-1$ is not a square in $\mathbb{F}_{q}$ then $\mathrm{U}_{2n}(q^{2})^{\varphi}\cong \mathrm{O}_{2n}^{-}(q)$.
\item[(ii)]  If $n$ is odd and $-1$ is a square in $\mathbb{F}_{q}$ then $\mathrm{U}_{2n}(q^{2})^{\varphi}\cong \mathrm{O}_{2n}^{+}(q)$.
\end{enumerate}
\end{enumerate}
\end{thm}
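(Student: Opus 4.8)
The plan is to reduce $\mathrm{U}_{2n}(q^{2})^{\varphi}$ to an orthogonal group over $\mathbb{F}_{q}$ and then pin down its sign by a short discriminant computation.

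\textbf{Step 1: reduction to $\mathrm{O}_{2n}^{\varepsilon}(q)$.} First I would fix the basis $\{h_{i}\}_{i=1}^{2n}$ of Construction \ref{cons.suitablebasis}, in which $\varphi$ acts as the coordinatewise map $\sigma$ and $\beta,\beta_{\varphi}$ share the block-diagonal Gram matrix $G$ with $2\times2$ blocks $M_{i}$. By Lemma \ref{lem.semicommutator}, $T\in\mathrm{U}_{2n}(q^{2})^{\varphi}$ iff $T\in\mathrm{U}_{2n}(q^{2})$ and $T$ commutes with $\varphi$. Since $\varphi$ is coordinatewise $\sigma$ in this basis, commuting with $\varphi$ is equivalent to $T$ having all entries in $\mathbb{F}_{q}=\mathbb{F}^{\sigma}$; for such a $T$ the unitary condition $\sigma(T^{t})GT=G$ becomes $T^{t}GT=G$ over $\mathbb{F}_{q}$. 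As $G$ is the Gram matrix of the non-degenerate symmetric bilinear form $\beta$, it defines a non-degenerate $2n$-dimensional orthogonal space $(\mathbb{F}_{q}^{2n},G)$, and $\mathrm{U}_{2n}(q^{2})^{\varphi}$ is exactly its isometry group, i.e. $\mathrm{O}_{2n}^{\varepsilon}(q)$ for the sign $\varepsilon\in\{+,-\}$ of that space.

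\textbf{Step 2: identify the sign.} The space $(\mathbb{F}_{q}^{2n},G)$ is the orthogonal direct sum of the non-degenerate $2$-dimensional spaces carried by the blocks $M_{i}$ (each block $\{h_{2i-1},h_{2i}\}$ spans a $\beta$-hyperbolic plane, hence is non-degenerate), so its sign is the product of the signs $\varepsilon_{i}$ of these planes, using $-\perp- = +\perp+$. For a square type flip every $M_{i}=\mathrm{diag}(2,2a)$ has determinant $4a$, a non-square in $\mathbb{F}_{q}$, so every plane is of non-square type. For a non-square type flip the first $n-1$ blocks are again $\mathrm{diag}(2,2a)$ of non-square type, while $\det M_{n}=a\,\mathrm{Tr}_{\sigma}(\lambda)^{2}-\alpha^{2}(\sigma(\lambda)-\lambda)^{2}=4a\,N_{\sigma}(\lambda)$, which by Lemma \ref{lem.nsqnorm} is a square in $\mathbb{F}_{q}$, so the last plane is of square type. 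Converting square/non-square type into $+/-$ type via Lemma \ref{lem.determinetype} (which depends only on whether $-1$ is a square in $\mathbb{F}_{q}$) then gives each $\varepsilon_{i}$ explicitly.

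\textbf{Step 3: bookkeeping.} Writing $\delta=+1$ if $-1$ is a non-square in $\mathbb{F}_{q}$ and $\delta=-1$ otherwise, a non-square-type plane has sign $\delta$ and a square-type plane has sign $-\delta$. Hence a square type flip yields overall sign $\delta^{n}$, and a non-square type flip yields $\delta^{n-1}\cdot(-\delta)=-\delta^{n}$; running through the parity of $n$ and the value of $\delta$ produces exactly the four cases in the statement. I expect the only real subtlety to be the careful translation between the $\beta_{\varphi}$ ``square/non-square type'' language over $\mathbb{F}$ and the orthogonal $+/-$ type over $\mathbb{F}_{q}$, together with confirming that each $M_{i}$ is genuinely non-degenerate so that the product-of-signs rule applies cleanly; the remaining determinant identities are the elementary computations indicated above.
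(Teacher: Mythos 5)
Your proposal is correct and follows essentially the same route as the paper: reduce to $\mathbb{F}_{q}$-rational matrices via the basis of Construction \ref{cons.suitablebasis} (the paper gets rationality from the two Gram-matrix equations, you from commuting with the coordinatewise $\sigma$-action, which is the same thing), identify $\mathrm{U}_{2n}(q^{2})^{\varphi}$ with the orthogonal group of the common Gram matrix, and determine the sign blockwise using Lemma \ref{lem.determinetype}, with Lemma \ref{lem.nsqnorm} giving $\det M_{n}=4aN_{\sigma}(\lambda)$ a square in the non-square case. Your explicit product-of-signs bookkeeping with $\delta$ is just a cleaner rendering of the paper's "pairs of elliptic lines are hyperbolic, so only the last block(s) matter" step, and the case analysis matches the statement.
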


\begin{proof}  Let $M$ denote the common Gram matrix of $\beta$ and $\beta_{\varphi}$ with respect to the basis $\{h_{i}|i=1,\ldots,2n\}$ produced in Construction \ref{cons.suitablebasis}.  Then, with respect to this basis, a transformation $A\in\mathrm{GL}(V)$ lies in $\mathrm{U}_{2n}(q^{2})^{\varphi}$ if and only if both 
$$
A^{t}M\sigma(A)=M\mbox{ and }A^{t}MA=M.
$$

Since $M$ and $A$ are both invertible, it follows that $A=\sigma(A)$ and so $A$ is defined over $\mathbb{F}_{q}$.  Hence $\mathrm{U}_{2n}(q^{2})^{\varphi}$ consists of all $q$-rational matrices in $\mathrm{GL}(V)$ that satisfy $^{t}AMA=M$.

Since the matrix $M$ is also defined over $\mathbb{F}_{q}$, we see that the group of matrices satisfying $A^{t}MA=M$ is the full orthogonal group over $\mathbb{F}_{q}$ with respect to the symmetric bilinear form whose Gram matrix is $M$.  So now we must determine this group.  That is, $\mathrm{U}_{2n}(q^{2})^{\varphi}$ is isomorphic to either $\mathrm{O}_{2n}^{+}(q)$ or $\mathrm{O}_{2n}^{-}(q)$.

Recall that if $L_{1}$ and $L_{2}$ are $\beta_{\varphi}$-orthogonal elliptic lines, then $L_{1}\perp L_{2}$ can be written as $H_{1}\perp H_{2}$ where each $H_{i}$ is a hyperbolic line.  It follows that the isomorphism type of $\mathrm{U}_{2n}(q^{2})^{\varphi}$ is determined by the last two blocks $M_{n-1}$ and $M_{n}$ if $n$ is even, and the last block $M_{n}$ if $n$ is odd.

The rest of the proof follows easily by combining Lemma \ref{lem.determinetype} with the matrices from Construction \ref{cons.suitablebasis}.  The only subtlety is that Lemma \ref{lem.nsqnorm} is needed to show that if $\varphi$ is non-square type flip then $\det(M_{n})$ is a square in $\mathbb{F}_{q}$.
\end{proof}

\bibliographystyle{plain}
\bibliography{references}

\end{document}